\newtheorem{problem}{\sc Problem}[section]
\newtheorem{theorem}{\sc Theorem}[section]
\newtheorem{lemma}{\sc Lemma}[section]
\newtheorem{definition}{\sc Definition}[section]
\newtheorem{corollary}{\sc Corollary}[section]
\newtheorem{proposition}{\sc Proposition}[section]
\newtheorem{assumption}{\sc Assumption}
\newcommand{\mb}[1]{\mbox{\boldmath $#1$}}
\def\a{\alpha}
\def\b{\beta}
\def\c{\gamma}
\def\tpi{\tilde{\pi}}
\def\hpi{\hat{\pi}}
\def\argmax{\operatornamewithlimits{arg\,max}}
\def\argmin{\operatornamewithlimits{arg\,min}}
\def\conv{\operatornamewithlimits{conv}}
\def\cone{\operatornamewithlimits{cone}}
\def\rank{\operatornamewithlimits{rank}}
\def\ext{\operatornamewithlimits{ext}}
\def\zhull{\operatornamewithlimits{z-hull}}
\def\rside{\operatornamewithlimits{r-side}}
\def\bydef{\mathinner{\,\stackrel{\text{\tiny def}}{=}\,}}
\def\R{\mathbf{R}}
\def\W{\mathcal{W}}
\def\H{\mathcal{H}}
\def\B{\mathcal{B}_{LU}}
\def\V{\mathcal{V}}
\def\abs#1{\mathinner{\left|{#1}\right|}}   
\newcommand{\scprod}[2]{\mathinner{\mb{#1}^T\mb{#2}}}
\def\argmax{\operatornamewithlimits{arg\,max}}
\def\opt1DGauss#1#2#3{\frac{1}{\sqrt{2\pi}#3}\, e^{-\frac{(#1-#2)^2}{2#3^2}}}      
\def\1DGauss#1#2#3{\frac{e^{-\frac{(#1-#2)^2}{2#3^2}}}{\sqrt{2\pi}#3}}      
\newcommand{\vlow}[2]{\mathinner{\underline{#1}_{#2}}}
\newcommand{\vup}[2]{\mathinner{\overline{#1}_{#2}}}
\newcommand{\aff}[3]{\mathinner{{#1}_{#2,#3}}}
\newcommand{\minofgt}[1]{\mathinner{[\vlow{y}{#1},\vup{y}{#1}]}}
\newcommand{\expl}[2]{\mathinner{\underset{#1}{\underbrace{#2}}}}
\newcommand{\cotan}[2]{\mathinner{\textup{cotan}\left({#1},\,{#2}\right)}}
\begin{document}

\title{Optimality of Affine Policies in Multi-stage Robust Optimization}
\author{Dimitris Bertsimas \thanks{Sloan School of Management and Operations Research Center, Massachusetts 
Institute of Technology, 77 Massachusetts Avenue, E40-147, Cambridge, MA 02139, USA. Email: \texttt{dbertsim@mit.edu}.} \and Dan A. Iancu\thanks{Operations Research Center, Massachusetts Institute of Technology, 
77 Massachusetts Avenue, E40-130, Cambridge, MA 02139, USA. Email: \texttt{daniancu@mit.edu}.} \and Pablo A. Parrilo \thanks{Laboratory for Information and Decision Systems, Massachusetts Institute of Technology,
77 Massachusetts Avenue, 32D-726, Cambridge, MA 02139, USA. Email: \texttt{parrilo@mit.edu}.}} 
\date{}

\maketitle

\begin{abstract}
In this paper, we show the optimality of a certain class of disturbance-affine control policies 
in the context of one-dimensional, constrained, multi-stage robust optimization. 
Our results cover the finite horizon case, with minimax (worst-case) objective,
and convex state costs plus linear control costs. 
We develop a new 
proof methodology, which explores the relationship between the geometrical 
properties of the feasible set of solutions and the structure of the objective 
function. Apart from providing an elegant and conceptually simple proof 
technique, the approach also entails very fast algorithms for the case 
of piecewise affine state costs, which we explore in connection with a classical 
inventory management application.
\end{abstract}

\section{Introduction.}
\label{sec:introduction}
Multi-stage optimization problems under uncertainty have been prevalent in 
numerous fields of science and engineering, and have elicited interest 
from diverse research communities, on both a theoretical and a practical 
level. Several solution approaches have been proposed, with various degrees 
of generality, tractability, and performance guarantees. Some of the 
most successful ones include exact and approximate dynamic programming, 
stochastic programming, sampling-based methods, and, more recently, 
robust and adaptive optimization, which is the focus of the present paper.

The topics of robust optimization and robust control have been studied, 
under different names, by a variety of academic groups, mostly in operations research 
(\cite{BenTal00}, \cite{BenTalRoos02}, \cite{BenTal02}, \cite{BerSim03}, 
\cite{BerSim04}, \cite{BertPach04})
and control theory (\cite{Bkas71}, \cite{Doyle91}, \cite{ElGhao98}, 
\cite{PabloMor03}, \cite{BempMor03}, \cite{KerrMacie04}, \citet{Zhou98}, 
\citet{Paga05}),
with considerable effort put into justifying the assumptions and general 
modeling philosophy. As such, the goal of the current paper 
will not be to \emph{justify} the use of robust (and, more generally, 
distribution-free) techniques. Rather, we will take the modeling approach
as a given, and investigate tractability and performance issues in the context of 
a certain class of optimization problems. 
More precisely, we will be concerned with the following multi-stage 
decision problem: 
\begin{problem}
  \label{prob:initial_problem}
  Consider the following one-dimensional, discrete, linear, time-varying dynamical system:
  \begin{equation}
    \label{eq:dynam_sys_evolution}
    x_{k+1} = \a_k \cdot x_k + \b_k \cdot u_k + \c_k \cdot w_k
  \end{equation}
  where $\a_k, \b_k, \c_k \neq 0$ are known scalars, 
  and the initial state $x_1 \in \R$ is specified. 
  The random disturbances $w_k$ are unknown, but bounded:
  \begin{equation}
    \label{eq:uncertainty_set}
    w_k \in \W_k \bydef [\vlow{w}{k},\vup{w}{k}].
  \end{equation}
  We would like to find a sequence of robust controllers $\{u_k\}$, 
  obeying certain constraints:
  \begin{align}
    u_k \in \left[ L_k, U_k \right]
    \label{eq:control_constraint}
  \end{align}
  ($L_k,U_k$ are known and fixed), and optimizing the 
  following min-max cost function over a finite horizon $1,\dots,T$:
  \begin{align}
    J_{mM} &\bydef~ \underset{u_1}{\min} \left[ c_1 \cdot u_1 + 
      \underset{w_1}{\max} \left[ 
        h_1(x_2) + \dots + \underset{u_k}{\min} \left[ c_k \cdot u_k + 
          \underset{w_k}{\max} \left[ h_k(x_{k+1}) + 
            \dots \right. \right. \right. \right. \nonumber \\
      & \qquad \qquad \qquad \qquad  \left. \left. \left.
          + \underset{u_{T}}{\min} \left( c_T \cdot u_T + \underset{w_{T}}{\max}
            ~ h_T(x_{T+1}) ~\right) \dots \right] \dots \right] ~\right]
      \label{eq:cost_function}
  \end{align}
  where the functions $h_k : \R \rightarrow \R$ are known, convex and coercive, 
  and $c_k \geq 0$ are fixed and known.
\end{problem}

The problem corresponds to a situation in which, at every time step $k$,
the decision maker has to compute a control action $u_k$, in such a way that certain 
constraints \eqref{eq:control_constraint} are obeyed, and a cost penalizing 
both the state ($h_k(x_{k+1})$) and the control ($c_k \cdot u_k$) is minimized.
The uncertainty, $w_k$, always acts so as to maximize the costs,
hence the problem solved by the decision maker corresponds to a worst-case scenario
(a minimization of the maximum possible cost). 
Examples of such problems include the case of quadratic state costs, 
$h_k(x_{k+1}) = r_{k+1} \cdot x_{k+1}^2, (r_{k+1} \geq 0)$, as well as 
norm-1 or norm-$\infty$ costs, $h_k(x_{k+1}) = r_{k+1} \cdot \abs{x_{k+1}},
(r_{k+1} \geq 0)$, all of which have been studied extensively in the literature 
in the unconstrained case (see, for example, \citet{Zhou98}, and \citet{Paga05}).

The solution to Problem \ref{prob:initial_problem} could be obtained using a ``classical'' 
Dynamic Programming formulation (see \citet{Bkas05}), in which the optimal policies 
$u_k^*(x_k)$ and the optimal value functions 
$J_k^*(x_k)$ are computed backwards in time, starting at the 
end of the planning horizon, $k=T$. The resulting policies 
are piecewise affine in the state $x_t$, and have properties that are 
well known and documented in the literature, dating back to \cite{Scarf58}. 

In the current paper, we would like to study the performance of a new class of 
policies, where instead of regarding the controllers $u_k$ as functions of the 
state $x_k$, one seeks direct parameterizations in the observed disturbances:
\begin{equation}
  \label{eq:param_in_disturbance}
  u_k : \W_1 \times \W_2 \times \dots \times \W_{k-1} \rightarrow \R.
\end{equation}
In this framework, we would require that constraint \eqref{eq:control_constraint} 
should be robustly feasible:
\begin{align}
  u_k(\mb{w}^{k}) \in \left[ L_k, U_k \right], \qquad
  \forall \, \mb{w}^{k} \bydef \left(w_1,\dots,w_{k-1}\right)
  \in \W_1 \times \dots \times \W_{k-1}.
  \label{eq:robust_control_constraint}
\end{align}
Note that if we insisted on this category of parameterizations, then we would have to consider 
a new state for the system, $\mb{X}_k$, which would include all the past-observed disturbances, 
$\mb{w}^k$. Furthermore, while $x_k$ summarizes all the past 
information needed to make an optimal decision at stage $k$, the same would not necessarily 
be true for $\mb{w}^k$, so we may want to include even more information in $\mb{X}_k$ (for example, 
the previous controls $\{u_t\}_{1 \leq t < k}$ or the previous states
$\{x_t\}_{1 \leq t < k}$ or some combination thereof).
Compared with the original, compact state formulation, $x_k$, the new state $\mb{X}_k$ 
would become much larger, and solving the 
Dynamic Program with state variable $\mb{X}_k$ would produce exactly 
the same optimal objective function value $J_{mM}$. 
Therefore, one would be tempted to ask what the benefit for 
introducing such a complicated state might be.

The hope is that, by considering policies over a larger state, one might be 
able to obtain simpler functional forms, for instance, affine policies. These have 
a very compact representation, since only the coefficients of 
the parameterization are needed, and, for certain classes of convex costs 
$h_k(\cdot)$, there may be efficient procedures available for computing them.

This approach is also not new in the literature. It has been originally 
advocated in the context of stochastic programming (\citet{RockWets78}), then in 
robust optimization (\citet{BenTal04}), and extended to 
linear systems theory (\cite{BenBoy05}, \cite{BenBoyd06}), with notable 
contributions from researchers in robust model predictive control and receding 
horizon control (see \cite{BempMor03}, \cite{KerrMacie04}, \cite{Loef02}, 
\citet{Boyd08}, and references therein). In all the papers, which usually deal with the more general 
case of multi-dimensional linear systems, the authors show how 
the reformulation can be done, and 
how the corresponding affine policies can be found by solving specific
types of optimization problems, which vary from linear and quadratic programs 
(\cite{BenBoy05}, \cite{KerrMacie04}, \cite{KerrMacie03}) to conic and 
semi-definite (\cite{BenBoy05}, \cite{Loef02}, \cite{BertDB07}, \cite{PabloMor03}),
or even multi-parametric, linear or quadratic programs (\cite{BempMor03}). 
The first steps towards analyzing the properties of such 
parameterizations were made in \cite{KerrMacie04}, 
where the authors show that, under suitable conditions, 
the resulting affine parameterization has certain desirable system theoretic 
properties (stability and robust invariance). Another notable contribution 
was made by \cite{KerrGoul05}, who prove that 
the class of affine disturbance feedback policies is  
equivalent to the class of affine state feedback policies with memory of 
prior states, thus subsuming the well known classes of open-loop and 
pre-stabilizing control policies.
However, to the best of our knowledge, apart from these theoretical advances, 
there has been very little progress in proving results about the quality 
of the objective function values resulting from the use of such parameterizations.

Our main result, summarized in Theorem \ref{thm:main_theorem} of Section 
\ref{sec:optim-affine-polic}, is that, for Problem \ref{prob:initial_problem}
stated above, affine policies of 
the form (\ref{eq:param_in_disturbance}) are, in fact, optimal. 
Furthermore, we are able to prove that a certain 
(affine) relaxation of the state costs is also possible, without any loss of 
optimality, which gives rise to very efficient algorithms for computing 
the optimal affine policies when the state costs $h_k(\cdot)$ are piece-wise affine.
To the best of our knowledge, this is the first result 
of its kind, and it provides intuition and motivation for the widespread 
advocation of such policies in both theory and applications. 
Our theoretical results are tight
(if the conditions in Problem \ref{prob:initial_problem} are 
slightly perturbed, then simple counterexamples for Theorem \ref{thm:main_theorem}
can be found), and the proof of the theorem itself is atypical, consisting of a 
forward induction and making use of polyhedral geometry to construct 
the optimal affine policies. Thus, we are able to gain 
insight into the structure and properties of these policies, which 
we explore in connection with a classical inventory management problem.

The paper is organized as follows. Section 2 presents an 
overview of the Dynamic Programming formulation in state variable $x_k$, 
extracting the optimal policies $u_k^*(x_k)$ and optimal value functions $J_k^*(x_k)$, 
as well as some of their properties. Section 3 contains our main result, and 
briefly discusses some immediate extensions and computational implications.
In Section 4, we introduce the constructive proof for building the affine 
control policies and the affine cost relaxations, and present 
counterexamples that prevent a generalization of the results.
In Section 5, we explore our results in connection with a classical inventory management problem.
Section 6 presents our conclusions and directions for future research.

\section{Dynamic Programming Solution.}
\label{sec:DP_formulation}
As already mentioned in the introduction, the solution to Problem 
\ref{prob:initial_problem} can be obtained using a ``classical'' 
Dynamic Programming formulation \citep{Bkas05}, in which the state is taken to be 
$x_k$, and the optimal policies 
$u_k^*(x_k)$ and optimal value functions $J_k^*(x_k)$ are computed 
starting at the end of the planning horizon, $k=T$, and moving backwards 
in time. In this section, we will briefly outline the solution 
of the Dynamic Program for our problem, and will state some of the key 
properties that will be used throughout the rest of the paper.
For completeness, a full proof of the results is included in 
Section \ref{sec:appendix:dynam-progr-solution} of the Appendix.

In order to simplify the notation, we remark that, since the 
constraints on the controls $u_k$ and 
the bounds on the disturbances $w_k$ are time-varying, and independent for 
different time-periods, we can restrict attention, without loss of 
generality\footnote{Such a system can always be obtained 
by the linear change of variables
$\tilde{x}_k = \frac{x_k}{\prod_{i=1}^{k-1}\a_i}$, and by suitably 
scaling the bounds $L_k,U_k,\vlow{w}{k},\vup{w}{k}$.},
to a system with $\a_k=\b_k=\c_k=1$. With this simplification, the problem
that we would like to solve is the following:
\begin{equation*}
  (DP) \quad
  \begin{aligned}
    J_{mM} \bydef \quad &\underset{u_1}{\min} \left[ c_1 \cdot u_1 + 
      \underset{w_1}{\max} \left[ 
        h_1(x_2) + \dots + \underset{u_k}{\min} \left[ c_k \cdot u_k + 
          \underset{w_k}{\max} \left[ h_k(x_{k+1}) + 
            \dots \right. \right. \right. \right. \\
    \qquad \qquad \qquad & \qquad \qquad \qquad \qquad  \left. \left. \left.
          + \underset{u_{T}}{\min} \left( c_T \cdot u_T + \underset{w_{T}}{\max}
            ~ h_T(x_{T+1}) ~\right) \dots \right] \dots \right] ~\right] \\
    \text{s.t.} \quad &  x_{k+1} = x_k + u_k + w_k \\
    & L_k \leq u_k \leq U_k 
    \qquad \qquad \qquad \forall \, k \in \{1,2,\dots,T\} \\
    & w_k \in \W_k = [\vlow{w}{k},\vup{w}{k}] .
  \end{aligned}
\end{equation*}
The corresponding Bellman recursion for $(DP)$ can then be written as follows:
\begin{align*}
  J_{k}^*(x_k) &\bydef \underset{L_k \leq u_k \leq U_k}{\min} \left[ \,
    c_k \cdot u_k + \underset{w_k \in \W_k}{\max} \left[~ h_k(x_k+u_k+w_k)
      + J_{k+1}^*\left(x_k+u_k+w_k\right) \, \right] \, \right] \quad ,
\end{align*}
where $J^{*}_{T+1}(x_{T+1}) \equiv 0$. By defining:
\begin{equation}
  \begin{aligned}
    y_k &\bydef x_k + u_k \\
    g_k\left(y_k\right) &\bydef 
    \underset{w_k \in \W_k}{\max} \left[~ h_k(y_k+w_k) + J_{k+1}^*\left(y_k+w_k\right)  
      ~\right] ~,
  \end{aligned}
  \label{eq:DP:gk_definition}
\end{equation}
we obtain the following solution to the Bellman recursion
(see Section \ref{sec:appendix:dynam-progr-solution} in 
the Appendix for the derivation):
\begin{align}
  u_k^*(x_k) &=
  \begin{cases}
    U_k, & ~\text{if}~  x_k < \vlow{y}{k} - U_k\\
    -x_k + y^*, & ~\text{otherwise} \\
    L_k, &  ~\text{if}~ x_k > \vup{y}{k} - L_k
  \end{cases} \label{eq:DP:uk_star} \\
  J_k^*(x_k) = c_k \cdot u_k^*(x_k) + g_k\left( x_k + u_k^*(x_k) \right) &=
  \begin{cases}
    c_k \cdot U_k + g_k(x_k+U_k), & ~\text{if}~  x_k < \vlow{y}{k} - U_k\\
    c_k\cdot (y^* - x_k) + g_k(y^*), & ~\text{otherwise} \\
    c_k \cdot L_k + g_k(x_k+L_k), &  ~\text{if}~ x_k > \vup{y}{k} - L_k ~,
  \end{cases} \label{eq:DP:Jk_star} 
\end{align}
where $y^* \in \minofgt{k}$, and $\minofgt{k}$ represents the (compact) set of 
minimizers of the convex function $c_k \cdot y + g_k(y)$.
A typical example of the optimal control law and the optimal value function 
is shown in Figure \ref{fig:uk_Jk_piecewise}.
\begin{figure}[h]
  \begin{center}
    
    \hspace{0.3cm}
    \includegraphics*[scale=0.75]{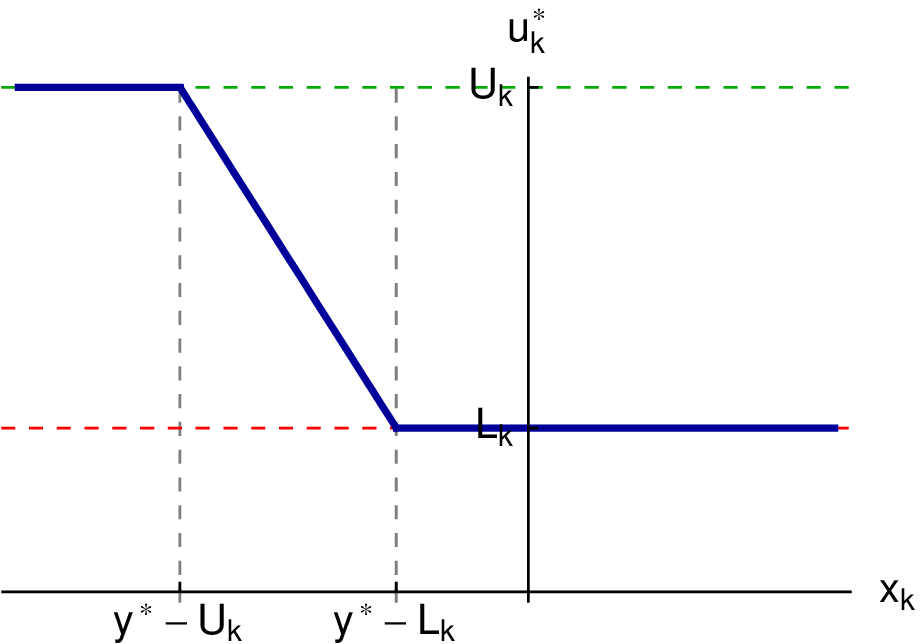}
    \hspace{0.4cm}
    \includegraphics*[scale=0.75]{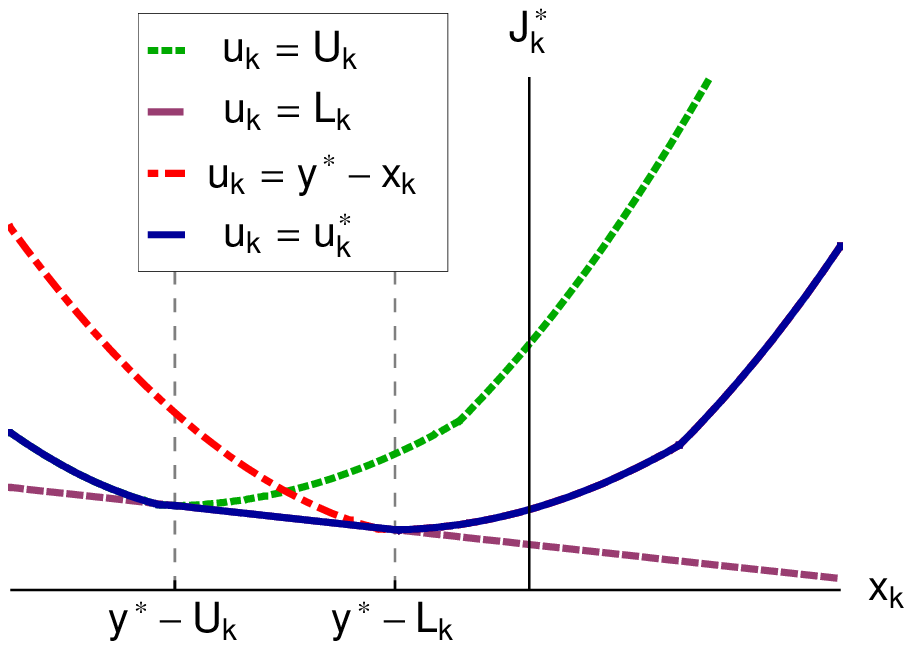}
    \caption{\label{fig:uk_Jk_piecewise} \small Optimal control law $u_k^*(x_k)$ 
      and optimal value function $J_k^*(x_k)$ at time $k$.}
  \end{center}
\end{figure}

The main properties of the solution that will be relevant 
for our later treatment are listed below:
\begin{itemize}
\item[\textbf{P1}] The optimal control law $u_k^*(x_k)$ is 
  piecewise affine, with 3 pieces, continuous and non-increasing.
\item[\textbf{P2}] The optimal value function, $J_k^*(x_k)$, and the function
  $g_k(y_k)$ are convex.
\item[\textbf{P3}] The difference in the values of the optimal control 
  law at two distinct arguments $s \leq t$ always satisfies:
  $u_k^*(s) - u_k^*(t) = - f_k \cdot (s-t)$, for some $f_k \in [0,1]$.
\item[\textbf{P4}] The optimal value function, $J_k^*(x_k)$, has a 
  subgradient at most $-c_k$ for $x_k < \vlow{y}{k}-U_k$, 
  exactly $-c_k$ in the interval $[\vlow{y}{k}-U_k,\vup{y}{k}-L_k]$, and at 
  least $-c_k$ for $x_k > \vup{y}{k}-L_k$.
\item[\textbf{P5}] The function $g_k(y_k)$ is decreasing, with a subgradient at most $-c_k$,
  in the interval $(-\infty,\vlow{y}{k}]$, and is increasing, with a sugradient 
  at least $-c_k$, in the interval $[\vup{y}{k},\infty)$.
\end{itemize}

\section{Optimality of Affine Policies in \texorpdfstring{$\mb{w}^k$}{wk}.}
\label{sec:optim-affine-polic}
In this section, we introduce our main contribution, namely a proof that 
policies that are affine in the disturbances $\mb{w}^k$ are, in fact, optimal for 
problem $(DP)$. Using the same notation as in Section \ref{sec:DP_formulation},
we can summarize our main result in the following theorem:
\begin{theorem}
  \label{thm:main_theorem}
  For every time step $k=1,\dots,T$, the following quantities exist:
  \begin{itemize}
  \item An affine control policy:
    \begin{align}
      q_k(\mb{w}^k) &\bydef 
      \aff{q}{k}{0} + \sum_{t=1}^{k-1} \aff{q}{k}{t} \cdot w_{t} \label{eq:affine_u}
    \end{align}
  \item An affine running cost:
    \begin{align}
      z_k(\mb{w}^{k+1}) &\bydef 
      \aff{z}{k}{0} + \sum_{t=1}^{k} \aff{z}{k}{t} \cdot w_{t} \label{eq:affine_cost}
    \end{align}
  \end{itemize}
  such that the following properties are obeyed:
  \begin{align}
    & L_k \leq q_k(\mb{w}^k) \leq U_k, \qquad
    \qquad \qquad \qquad \qquad \qquad \forall \, 
    \mb{w}^{k} \in \W_1\times \dots \times \W_{k-1}  
    \label{eq:constr:qk_robustly_feasible}\\
    & z_k(\mb{w}^{k+1}) \geq 
    h_k\left(x_1 + \sum_{t=1}^{k} \left( q_t(\mb{w}^t) + w_t \right) \right),
    \quad \quad \forall \, 
    \mb{w}^{k+1} \in \W_1\times \dots \times \W_{k}
    \label{eq:constr:affine_run_cost} \\
    & J_{mM} = \underset{w_1,\dots,w_k}{\max} \left[  \sum_{t=1}^k \left( c_t \cdot q_t(\mb{w}^t) +
        z_t(\mb{w}^{t+1}) \right) + 
      J_{k+1}^*\left(x_1 + \sum_{t=1}^{k} \left( q_t(\mb{w}^t) + w_t \right) \right)
    \right].
    \label{eq:constr:same_objective}
  \end{align}
\end{theorem}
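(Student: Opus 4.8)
\emph{Proof strategy.}
The plan is to prove the three properties simultaneously by a \emph{forward} induction on $k$, carrying \eqref{eq:constr:qk_robustly_feasible}--\eqref{eq:constr:same_objective} as a loop invariant. One direction of \eqref{eq:constr:same_objective} is free: for \emph{any} robustly feasible affine controls $q_1,\dots,q_k$ and \emph{any} affine functions $z_t$ dominating the state costs, using these controls in the original $(DP)$ and replacing each $h_t$ by the larger $z_t$ can only raise the worst-case cost, so the right-hand side of \eqref{eq:constr:same_objective} is always $\ge J_{mM}$. The entire content is to construct $q_k$ and $z_k$ for which it is also $\le J_{mM}$.

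The reduction I would use is that along an affine policy the scalar state $x_k = x_1 + \sum_{t<k}(q_t(\mb{w}^t)+w_t)$ is itself an affine function $X_k(\mb{w}^k)$ of the disturbances, with range a compact interval $I_k=[\underline X_k,\overline X_k]$. I would look for $q_k$ and $z_k$ that see $\mb{w}$ only through the state, $q_k=\hat q_k\circ X_k$ and $z_k=\hat z_k\circ X_{k+1}$ with $\hat q_k,\hat z_k:\R\to\R$ affine; then \eqref{eq:constr:qk_robustly_feasible} reads $\hat q_k(I_k)\subseteq[L_k,U_k]$, \eqref{eq:constr:affine_run_cost} reads $\hat z_k\ge h_k$ on $I_{k+1}$, and the multidimensional maximum in \eqref{eq:constr:same_objective} collapses: maximizing the accumulated affine running cost $\sum_{t<k}(c_tq_t+z_t)$ over a slice $\{X_k=\xi\}$ of the box $\W_1\times\dots\times\W_{k-1}$ gives a \emph{concave} function $V_{k-1}(\xi)$, so the invariant at stage $k-1$ is exactly $\max_{\xi\in I_k}[V_{k-1}(\xi)+J_k^*(\xi)]=J_{mM}$, which is the form I would propagate (with $V_0\equiv 0$, $I_1=\{x_1\}$ at the base).

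The workhorse is a sandwich lemma: a convex function lying below a concave function on a compact interval has an affine function between them (on $[a,b]$ this reduces to $\phi(a)+\phi(b)\le\theta(a)+\theta(b)$). Given the invariant at $k-1$, I would pick a worst-case state $\xi^\star\in\argmax_{\xi\in I_k}[V_{k-1}+J_k^*]$; choose the affine $\hat q_k$ guided by the piecewise-affine optimal law $u_k^*(\cdot)$ (a secant-type choice through the appropriate vertices of its graph, robustly feasible on $I_k$ and with slope in $[-1,0]$ by property P3, so that $\psi_k(\xi):=\xi+\hat q_k(\xi)$ is nondecreasing) matched so that $\psi_k(\xi^\star)=\xi^\star+u_k^*(\xi^\star)=:y^\circ$; and choose $\hat z_k$ by the sandwich lemma on the slice through $y^\circ$, i.e. between the convex map $w_k\mapsto h_k(y^\circ+w_k)$ and the concave map $w_k\mapsto g_k(y^\circ)-J_{k+1}^*(y^\circ+w_k)$, which are ordered by the very definition \eqref{eq:DP:gk_definition} of $g_k$. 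With these choices the one-step density $N_k(\xi):=c_k\hat q_k(\xi)+\max_{w_k\in\W_k}[\hat z_k(\psi_k(\xi)+w_k)+J_{k+1}^*(\psi_k(\xi)+w_k)]$ is convex, satisfies $N_k\ge J_k^*$ on $I_k$ by feasibility and $\hat z_k\ge h_k$, and equals $J_k^*$ at $\xi^\star$ by construction together with the DP identity $J_k^*(\xi)=\min_{u}[c_ku+g_k(\xi+u)]$.

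It then remains --- and this is the crux --- to show the reverse inequality $V_{k-1}(\xi)+N_k(\xi)\le J_{mM}$ for \emph{every} $\xi\in I_k$, i.e. that replacing the optimal law by its affine surrogate does not push the worst case to a point where $N_k$ overshoots $J_k^*$ by more than the concave slack $J_{mM}-V_{k-1}(\xi)-J_k^*(\xi)$. I expect this to be the main obstacle, and to be settled by a case analysis on the position of $I_k$ relative to the two kinks $\underline y_k-U_k$ and $\overline y_k-L_k$ of $u_k^*$: on the middle piece $N_k-J_k^*$ vanishes and $V_{k-1}+N_k$ is concave, hence maximized at $\xi^\star$; on the outer pieces one controls the one-sided slopes of $N_k-J_k^*$ via the subgradient bounds P4--P5 (subgradient $\le-c_k$ on the left, $\ge-c_k$ on the right) against the nonincreasing derivative of the concave $V_{k-1}$, forcing $\xi^\star$ to lie where the surrogate is tight --- this is precisely where the polyhedral-geometry construction of the paper does the real work. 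Once it is in place, rewriting $V_{k-1}(\xi)+N_k(\xi)$ as $\max_{\mb{w}^{k+1}}[\sum_{t\le k}(c_tq_t+z_t)+J_{k+1}^*(X_{k+1})]$ restores \eqref{eq:constr:same_objective} at stage $k$ and closes the induction.
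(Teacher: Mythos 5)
Your setup is sound and matches the paper's in several respects: the forward induction, the observation that one inequality in \eqref{eq:constr:same_objective} is automatic, the base case, and the reduction of the multidimensional maximum to a one-dimensional one via the concave function $V_{k-1}(\xi)=\max\{\theta_1:\theta_2=\xi\}$ (this is exactly the paper's Lemma 4.1 on the right side of the zonogon $\Theta$, in different clothing). The fatal step is the sentence ``I would look for $q_k$ and $z_k$ that see $\mb{w}$ only through the state.'' From stage $3$ onward, affine functions of the scalar state $X_k$ form a $2$-parameter subclass of the $k$-parameter class of disturbance-affine functions, and the theorem is false within that subclass. Here is why the ``crux'' you defer cannot be closed. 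At any $\xi$ where the slack $J_{mM}-V_{k-1}(\xi)-J_k^*(\xi)$ vanishes, your inequality $V_{k-1}+N_k\le J_{mM}$ forces $c_k\hat q_k(\xi)+g_k(\xi+\hat q_k(\xi))=J_k^*(\xi)$, i.e.\ $\hat q_k(\xi)$ must be a minimizer of $c_ku+g_k(\xi+u)$; when that minimizer is unique this means $\hat q_k(\xi)=u_k^*(\xi)$. The maximum of the piecewise-linear-concave-plus-convex function $V_{k-1}+J_k^*$ can be attained simultaneously at three or more states lying on different affine pieces of $u_k^*$ (one below the band $\B$, one inside, one above, say), and then no affine $\hat q_k$ can coincide with $u_k^*$ at all of them. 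The same counting obstruction kills $\hat z_k$: domination $\hat z_k\ge h_k$ together with preservation of the maximum forces $\hat z_k$ to touch $h_k$ at every worst-case next-state, and an affine function of the scalar $X_{k+1}$ can touch a strictly convex $h_k$ at no more than two points.

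This is precisely the degree-of-freedom issue the paper's construction is built around. Algorithm 1 determines all $k+1$ coefficients of $q_{k+1}(\mb{w}^{k+1})$ by \emph{matching} $u_{k+1}^*$ at every vertex of $\rside(\Delta_\Gamma)$ (up to $k+1$ of them) and \emph{aligning} the remaining generators so that the image zonogon $\Gamma$ is the zonogon hull of those vertices; the resulting coefficients satisfy $q_i=-f_i b_i$ with $f_i$ \emph{varying} with $i$ (it is $0$ for increments below the band and $1$ inside it), so the constructed policy is demonstrably not a function of $x_{k+1}$ alone. Likewise Algorithm 2 matches $h$ at up to $k+2$ points with distinct states, which an affine function of the state cannot do. Your one-dimensional collapse is legitimate for \emph{evaluating} the objective, but not for \emph{parameterizing} the decision: the two-dimensional zonogon geometry (which disturbance vertex realizes each worst-case state, and with what accumulated cost) is what supplies the extra coefficients that make the construction possible. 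To repair the proof you would have to keep $q_k$ and $z_k$ as general affine functions of $\mb{w}$ and replace your single-point matching at $\xi^\star$ by matching at the whole right side, which is essentially the paper's argument.
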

Let us interpret the main 
statements and results in the theorem. Equation 
\eqref{eq:constr:qk_robustly_feasible}
confirms the existence of an affine policy $q_k(\mb{w}^k)$ 
that is robustly feasible, i.e. that obeys the control constraints, 
no matter what the realization of the disturbances may be. 
Equation \eqref{eq:constr:affine_run_cost} states 
the existence of an affine cost $z_k(\mb{w}^{k+1})$ 
that is always larger than the convex state cost $h_k(x_{k+1})$ 
incurred when the affine policies $\{q_t(\cdot)\}_{1 \leq t \leq k}$ are used. 
Equation \eqref{eq:constr:same_objective} guarantees that, 
despite using the (suboptimal) affine control law
$q_k(\mb{w}^k)$, and 
incurring a (potentially larger) affine stage cost 
$z_k(\mb{w}^{k+1})$, the overall objective function 
value $J_{mM}$ is, in fact, not increased.
This translates in the following two main results:
\begin{itemize}
\item \emph{Existential result}. Affine policies $q_k(\mb{w}^k)$ 
  are, in fact, optimal for Problem \ref{prob:initial_problem}.
\item \emph{Computational result}. When the convex costs $h_k(x_{k+1})$ are 
  piecewise affine, the optimal affine policies $\left\{q_k(\mb{w}^k)\right\}_{1 \leq k \leq T}$ 
  can be computed by solving a Linear Programming problem.
\end{itemize}
To see why the second implication would hold, suppose 
that $h_k(x_{k+1})$ is the maximum of $m_k$ affine functions:
\begin{align*}
  h_k(x_{k+1}) = \max \left( p_k^i \cdot x_{k+1} + p_{k,0}^i \right), ~
  i \in \{1,\dots,m_k\}.
\end{align*}
Then an optimal affine policy $q_k(\mb{w}^k)$ can be obtained by 
solving the following optimization problem (see \citet{BenBoy05,BenTal05}):
\begin{align*}
  \underset{
    \begin{array}{c} J; \{\aff{q}{k}{t}\}; \{\aff{z}{k}{t}\}
    \end{array}
  }
  {\min}  & \quad \quad J \\
  \text{s.t.} \quad & \quad \forall \, \mb{w} \in \W_1 \times \W_2 
  \times \dots \times \W_T, \quad \forall \, k = 1,\dots,T : \\
  (AARC) \qquad \qquad \qquad & J \geq \sum_{k=1}^T\left[ c_k\cdot \aff{q}{k}{0} + \aff{z}{k}{0} + 
    \sum_{t=1}^{k-1} \left( c_t \cdot \aff{q}{k}{t} + 
      \aff{z}{k}{t} \right) \cdot w_t + \aff{z}{k}{k} \cdot w_k \right] \\ 
  \forall \, i \in \{1,\dots,m_k\}: \quad  
  & \aff{z}{k}{0} + \sum_{t=1}^{k} \aff{z}{k}{t} \cdot w_t 
  \geq p_k^i \cdot \left[~ x_1 + \sum_{t=1}^{k} \left( \aff{q}{t}{0} + 
     \sum_{\tau=1}^{t-1} \aff{q}{t}{\tau} \cdot w_\tau + w_t \right) 
   ~\right] + p_{k,0}^i ~\\
  & L_k \leq \aff{q}{k}{0} + \sum_{t=1}^{k-1}\aff{q}{k}{t} \cdot w_{t} \leq U_k.
\end{align*}
Although Problem $(AARC)$ is still a semi-infinite LP (due to the requirement of 
robust constraint feasibility, $\forall \, \mb{w}$), since all the constraints are 
inequalities that are bi-affine in the decision variables and the uncertain quantities, 
a very compact reformulation of the problem is available. In particular, with a typical 
constraint in $(AARC)$ written as:
\begin{equation*}
  \lambda_0[\mb{x}] + \sum_{t=1}^T \lambda_t[\mb{x}]\cdot w_t 
  \leq 0, \qquad  \forall\, \mb{w} \in \W_1 \times \W_2 
  \times \dots \times \W_T ~,
\end{equation*}
where $\lambda[\mb{x}]$ denotes that $\lambda$ is an affine 
function of the decision variables $\mb{x}$, it can be shown 
(see \citet{BenTal02}, \citet{BenTal04} for details) that the previous condition 
is equivalent to:
\begin{align}
  \begin{cases}
    \lambda_0[\mb{x}] + 
    \sum_{t=1}^T \left( \lambda_t[\mb{x}] \cdot \frac{\vlow{w}{t}+\vup{w}{t}}{2} + 
      \frac{\vup{w}{t}-\vlow{w}{t}}{2} \cdot \xi_t \right) \leq 0 \\
    -\xi_t \leq \lambda_t[\mb{x}] \leq \xi_t, ~~ t =1,\dots,T ~,
  \end{cases}
  \label{eq:typical_constr}
\end{align}
which are linear constraints in the decision variables $\mb{x}, \mb{\xi}$. 
Therefore, $(AARC)$ can be reformulated as a Linear Program, with 
$O\left(T^2 \cdot \max_k m_k \right)$ variables and $O\left(T^2 \cdot \max_k 
  m_k \right)$ constraints, which can be solved very efficiently using 
commercially available software. 

We will conclude our observations by making one last remark related to an 
immediate extension of the results. Note that in the statement of Problem
\ref{prob:initial_problem}, there was no mention about constraints on the 
states $x_k$ of the dynamical system. In particular, one may want to incorporate 
lower or upper bounds on the states, as well:
\begin{align}
  L^x_k \leq x_k \leq U^x_k
  \label{eq:state_constraints}
\end{align}
We claim that, in case the mathematical problem including such constraints 
remains feasible\footnote{Such constraints may lead to infeasible 
problems. For example: 
$T=1, x_1=0, u_1 \in [0,1], w_1 \in [0,1], x_2 \in [5,10]$.},
then affine policies will, again, be optimal. The reason is that such 
constraints can always be simulated in our current framework, by adding 
suitable convex barriers to the stage costs $h_k(x_{k+1})$. In particular, by 
considering the modified, convex\footnote{The functions $\tilde{h}_k(\cdot)$ are 
convex in $x_{k+1}$, since $\mathbf{1}([L_{k+1},U_{k+1}])$, the indicator function of a 
convex set, is convex (see pages 28, 33 in \cite{Rock70} or  
Example 7.1.2 in \cite{Bkas03}), 
and the sum of convex functions remains convex.} stage costs:
\begin{align*}
  \tilde{h}_k(x_{k+1}) &\bydef h_k(x_{k+1}) + \mathbf{1}_{[L_{k+1}^x, U_{k+1}^x]}(x_{k+1}), ~
  \text{where}\\
  \mathbf{1}_{[L_{k+1}^x, U_{k+1}^x]}(x_{k+1}) &\bydef
  \begin{cases}
    0, &~\text{if}~ x_{k+1} \in [L_{k+1}^x, U_{k+1}^x] \\
    \infty, &~\text{otherwise} ~,
  \end{cases}
\end{align*}
it can be easily seen that the original problem, with convex stage costs 
$h_k(\cdot)$ and state constraints \eqref{eq:state_constraints},
is equivalent to a problem with the modified stage costs 
$\tilde{h}_k(\cdot)$ and no state constraints. And, since affine 
policies are optimal for the latter problem, the result is immediate.
Therefore, our decision to exclude such constraints from the original 
formulation was made only for sake of brevity and conciseness of the proofs,
but without loss of generality.

\section{Proof of Main Theorem.}
\label{sec:proof-main-theorem}
The current section contains the proof of Theorem \ref{thm:main_theorem}.
Before presenting the details, we would first like to give some 
intuition behind the strategy of the proof, and introduce the 
organization of the material.

Unlike most Dynamic Programming proofs, which utilize backward 
induction on the time-periods, we will 
have to proceed with a \emph{forward} induction. Section \ref{sec:induct_hypo} will 
present a test of the first step of the induction, and will 
then introduce a detailed analysis of the consequences of the induction hypothesis.

We will then separate the completion of the induction step into two parts.
In the first part, handled in Section \ref{sec:constr-affine-contr}, by 
exploiting the structure provided by the forward induction hypothesis, 
and making critical use of the properties of the 
optimal control law $u_k^*(x_k)$ and optimal value function $J_k^*(x_k)$
(the solutions to problem $(DP)$ in Section \ref{sec:DP_formulation}), 
we will be able to introduce a candidate affine policy $q_k(\mb{w}^k)$.
In Section \ref{sec:affine_controller_preserves_objective},
we will then prove that this policy is robustly feasible, 
and preserves the min-max value of the overall problem, $J_{mM}$, when 
used in conjunction with the original, convex state costs, $h_k(x_{k+1})$.

Similarly, for the second part of the inductive step, (Section \ref{sec:construction_affine_stage_cost}),
by re-analyzing the feasible sets of the optimization 
problems resulting after the use of the (newly computed) affine policy $q_k(\mb{w}^k)$,
we will determine a candidate affine cost $z_k(\mb{w}^{k+1})$, 
which we will prove (Section \ref{sec:proof_of_corectness_affine_cost_construction})
to be always larger than the original convex state costs, $h_k(x_{k+1})$. However, 
despite this fact, in Section \ref{sec:proof_of_corectness_affine_cost_construction} we 
will also show that when this affine cost is incurred, the overall min-max value $J_{mM}$
remains unchanged, which will complete the proof of the inductive step. 

Section \ref{sec:proof_of_main_theorem} will then conclude the proof 
of Theorem \ref{thm:main_theorem}, and will 
outline several counterexamples that prevent an immediate extension of the result 
to more general cases.

\subsection{Induction Hypothesis.}
\label{sec:induct_hypo} 
As mentioned before, the proof of the theorem will utilize a \emph{forward} induction 
on the time-step $k$. Let us begin by verifying the induction at $k=1$.

Using the same notation as in Section \ref{sec:DP_formulation},
by taking the affine control to be $q_1 \bydef u_1^*(x_1)$, we immediately get that $q_1$, which is simply 
a constant, is robustly feasible, so \eqref{eq:constr:qk_robustly_feasible} is obeyed.

Furthermore, since $q_1 = u_1^*(x_1)$ is optimal, we can write the overall optimal objective 
value, $J_{mM}$, as:
\begin{align}
  J_{mM} = J_1^*(x_1) &\overset{}{=} \underset{u_1 \in [L_1,U_1]}{\min} \left[\, c_1 \cdot u_1 + 
    g_1(x_1+u_1) ~\right] = c_1 \cdot q_1 + g_1\left(x_1+q_1\right)  
  \overset{\eqref{eq:apdx:DP:gk_value}}{=} \nonumber \\
  & \overset{\eqref{eq:apdx:DP:gk_value}}{=} c_1 \cdot q_1 + {\max} \{ \,
  \left(h_1 + J_2^*\right)\left(x_1+q_1+\vlow{w}{1} \right), ~
  \left(h_1 + J_2^*\right)\left(x_1+q_1+\vup{w}{1} \right) \, \}.
  \label{eq:min_prob_at_time_1}
\end{align}
Next, we introduce the affine cost 
$z_1(w_1) \bydef \aff{z}{1}{0} + \aff{z}{1}{1} \cdot w_1$, 
where we constrain the coefficients $\aff{z}{1}{i}$ to satisfy 
the following system of linear equations:
\begin{align*}
  \begin{cases}
    \aff{z}{1}{0} + \aff{z}{1}{1} \cdot \vlow{w}{1} =& h_1(x_1+q_1+\vlow{w}{1}) \\
    \aff{z}{1}{0} + \aff{z}{1}{1} \cdot \vup{w}{1} =& h_1(x_1+q_1+\vup{w}{1}) ~.
  \end{cases}
\end{align*}
Note that for fixed $x_1$ and $q_1$, the function $z_1(w_1)$ 
is nothing but a linear interpolation of the mapping $w_1 \mapsto h_1(x_1+q_1+w_1)$, 
matching the value at points $\{\vlow{w}{1},\vup{w}{1}\}$. 
Since $h_1$ is convex, the linear interpolation defined above 
clearly dominates it, so condition \eqref{eq:constr:affine_run_cost} 
is readily satisfied. 
Furthermore, by (\ref{eq:min_prob_at_time_1}),
$J_{mM}$ is achieved for $w_1 \in \{\vlow{w}{1},\vup{w}{1}\}$, so 
condition \eqref{eq:constr:same_objective} is also obeyed.
  
Having checked the induction at time $k=1$, let us now assume that 
the statements of Theorem \ref{thm:main_theorem} are true for times $t=1,\dots,k$. Equation  \eqref{eq:constr:same_objective} written for stage $k$ then yields:
\begin{align}
  J_{mM} & = \underset{w_1,\dots,w_k}{\max} \left[ \,
    {\sum_{t=1}^k \left( c_t \cdot q_t(\mb{w}^t) + 
        z_t\left(\mb{w}^{t+1}\right) \right)} + 
    J_{k+1}^*\left({x_1 + \sum_{t=1}^k \left( q_t(\mb{w}^t) + w_t\right)} \right) \, \right]
  = \nonumber\\
  &= \underset{\left(\theta_1,\theta_2\right) \in \Theta}{\max} 
  \left[ \, \theta_1 + J_{k+1}^*(\theta_2) \, \right] ,
  \label{eq:Jstar_with_theta1_theta2} \\
  \text{where} \quad \Theta &\bydef \left\{ \left(\theta_1,\theta_2\right) : 
    \theta_1 \bydef \sum_{t=1}^k \left( c_t \cdot q_t(\mb{w}^t) + 
      z_t\left(\mb{w}^{t+1}\right) \right), ~~
    \theta_2 \bydef x_1 + \sum_{t=1}^k \left( q_t(\mb{w}^t) + w_t\right) \right\}.
  \label{eq:set_for_theta1_theta2}
\end{align}

Since $\left\{q_t\right\}_{1 \leq t \leq k}$ and $\left\{z_t\right\}_{1 \leq t \leq k}$
are affine functions, this implies that, although the uncertainties $w_1,\dots,w_k$ 
lie in a set with $2^k$ vertices 
(the hyper-rectangle $\W_1 \times \dots \times \W_k$), 
they are only able to affect the objective $J_{mM}$ through affine combinations taking values in the 
set $\Theta$. Such a polyhedron, arising as a $2$-dimensional affine 
projection of a $k$-dimensional hyper-rectangle, is called a \textit{zonogon}. 
It belongs to a larger class of polytopes, known as \textit{zonotopes}, 
whose combinatorial structure and properties are well documented 
in the discrete and computational geometry literature. 
The interested reader is referred to Chapter 7 of \citet{Ziegl03} for a very nice 
and accessible introduction. 
\begin{figure*}[h]
  \begin{center}
    \begin{psfrags}      
      \psfrag{v0}[l][][1]{$\mb{v}_0=\mb{v}^-$ \small \textbf{[000000]}}
      \psfrag{v1}[l][][1]{$\mb{v}_1$ \small \textbf{[100000]}}
      \psfrag{v2}[l][][1]{$\mb{v}_2\,$\small \textbf{[110000]}}
      \psfrag{v3}[l][][1]{$\mb{v}_3$ \small \textbf{[111000]}}
      \psfrag{v4}[l][][1]{$\mb{v}_4$ \small \textbf{[111100]}}
      \psfrag{v5}[l][][1]{$\mb{v}_5$ \small \textbf{[111110]}}
      \psfrag{v6}[l][][1]{$\mb{v}_6=\mb{v}^+$ \small \textbf{[111111]}}
      \psfrag{vj}[r][][1]{$\mb{v}_j$}
      \psfrag{vjbar}[c][][1]{$\mb{v}_j^{\#}$}
      \psfrag{t1}[][][1]{$\theta_1$}
      \psfrag{t2}[][][1]{$\theta_2$}
      \includegraphics[scale=0.45]{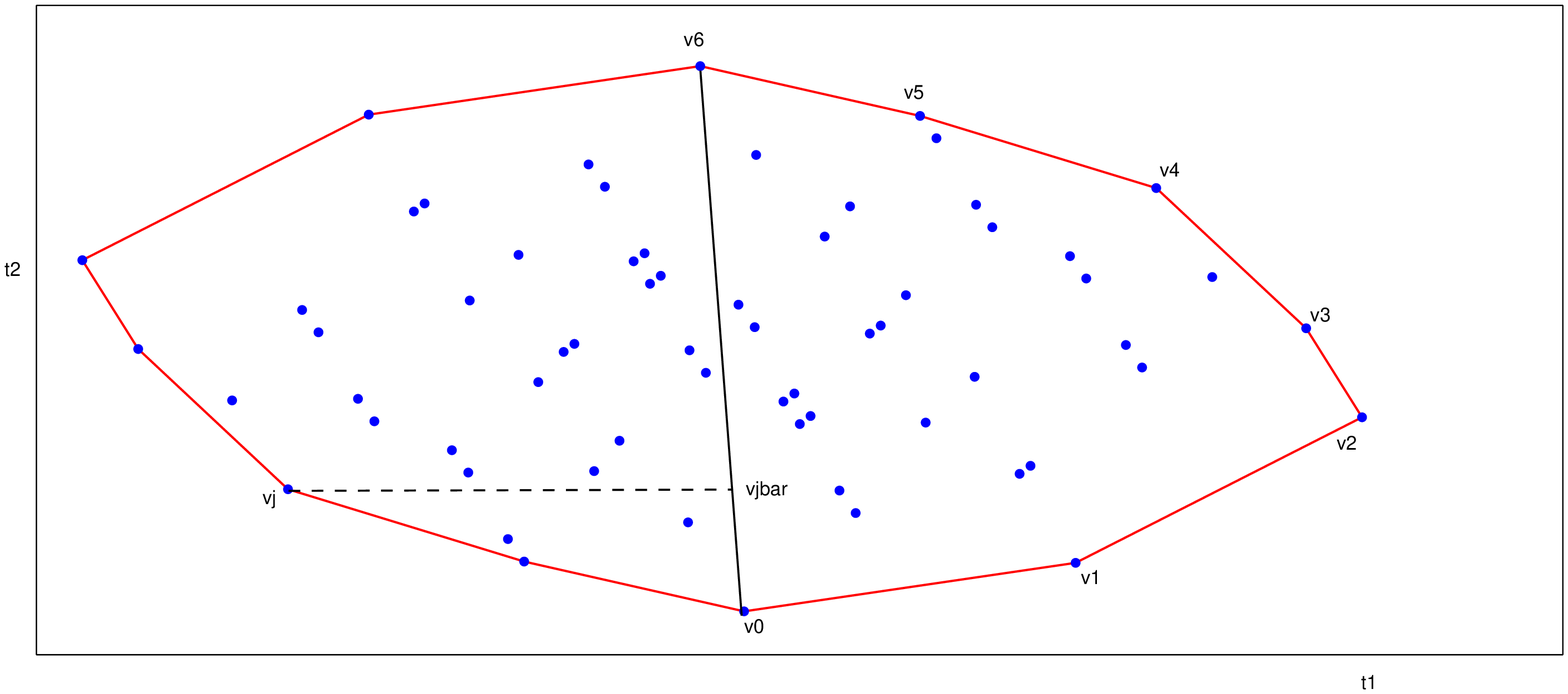}
    \end{psfrags}
    \caption{\small Zonogon obtained from projecting a hypercube in $\R^6$.}
    \label{fig:zonotope_R6}
  \end{center}
\end{figure*}

The main properties of a zonogon that we are interested in 
are summarized in Lemma \ref{lem:zonotope_properties}, found in the Appendix.
In particular, the set $\Theta$ is centrally 
symmetric, and has at most $2k$ vertices (see Figure \ref{fig:zonotope_R6} 
for an example). Furthermore, by numbering the vertices of $\Theta$ 
in counter-clockwise fashion, starting at:
\begin{align}
  \mb{v}_0 \equiv \mb{v}^- &\bydef 
  \argmax_{\theta_1} \left\{ 
    \argmin_{\theta_2} \left\{
      \mb{\theta} \in \Theta \right\} \right\}
  \label{eq:vertex_theta2_min} ~, 
\end{align}
we can establish the following result concerning the 
points of $\Theta$ that are relevant in our problem:
\begin{lemma}
  \label{lem:theta_restricted_region}
  The maximum value in optimization problem \eqref{eq:Jstar_with_theta1_theta2} is 
  achieved for $(\theta_1,\theta_2) \in 
  \left\{ \mb{v}_0,\mb{v}_1,\dots, \mb{v}_k\right\}$.
\end{lemma}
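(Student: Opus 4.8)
The plan is to read \eqref{eq:Jstar_with_theta1_theta2} as the maximization of the function $F(\theta_1,\theta_2)\bydef\theta_1+J_{k+1}^*(\theta_2)$ over the zonogon $\Theta$, and to pin down where the optimum can occur by exploiting the combinatorial description of $\Theta$ collected in Lemma~\ref{lem:zonotope_properties}. The first, easy, observation is that $F$ is convex on $\R^2$, being the sum of a linear function and the convex function $J_{k+1}^*(\theta_2)$ (property \textbf{P2}); hence its maximum over the polytope $\Theta$ is attained at a vertex, i.e.\ at one of the at most $2k$ points $\mb{v}_0,\dots,\mb{v}_{2k-1}$.

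To locate that vertex I would use that $F$ is strictly increasing in $\theta_1$ (since $\partial F/\partial\theta_1\equiv 1$). Thus for any $(\theta_1,\theta_2)\in\Theta$, replacing $\theta_1$ by $\overline{\theta}_1(\theta_2)\bydef\max\{\theta_1':(\theta_1',\theta_2)\in\Theta\}$ keeps the point in $\Theta$ and does not decrease $F$. Consequently the maximum of $F$ over $\Theta$ equals its maximum over the \emph{right side} of $\Theta$, $\overline\Theta\bydef\{(\overline{\theta}_1(\theta_2),\theta_2):\theta_2^{\min}\le\theta_2\le\theta_2^{\max}\}$, a polygonal arc joining the $\theta_2$-minimal vertex of $\Theta$ to the $\theta_2$-maximal one. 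On each edge of this arc $\theta_1$ is affine in $\theta_2$, so $F$ restricted to an edge is (affine)$+$(convex), hence convex; therefore the maximum of $F$ over $\overline\Theta$, and thus over $\Theta$, is attained at a vertex of $\overline\Theta$.

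It then remains to identify the vertices of $\overline\Theta$ with $\{\mb{v}_0,\dots,\mb{v}_k\}$, and this is the crux. By \eqref{eq:vertex_theta2_min}, $\mb{v}_0=\mb{v}^-$ is precisely the $\theta_2$-minimal vertex of $\Theta$, with ties broken toward larger $\theta_1$, i.e.\ toward $\overline\Theta$; and, by the central symmetry recorded in Lemma~\ref{lem:zonotope_properties}, its antipode $\mb{v}^+$ is the $\theta_2$-maximal vertex. If one orients each of the $k$ generators of $\Theta$ so that it has a nonnegative $\theta_2$-component, then, as $\theta_2$ sweeps from $\theta_2^{\min}$ to $\theta_2^{\max}$, the right side $\overline\Theta$ is traced out by raising the generator coordinates one at a time in order of decreasing ``$\theta_1$-per-$\theta_2$'' ratio; hence $\overline\Theta$ consists of exactly $k$ edges and the $k+1$ vertices $\mb{v}^-=\mb{v}_0,\mb{v}_1,\dots,\mb{v}_k=\mb{v}^+$ encountered when traversing $\partial\Theta$ counter-clockwise from $\mb{v}^-$. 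Combined with the previous paragraph, this proves the lemma.

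The main obstacle is exactly this last identification: one must confirm that the counter-clockwise chain emanating from $\mb{v}^-$ sweeps out the right side (the $\theta_1$-\emph{maximal} frontier) rather than the left one, which comes down to checking the orientation conventions against the labeling of Figure~\ref{fig:zonotope_R6}, and one must also dispatch the degenerate configurations (a vertical edge of $\Theta$, or collinear/parallel generators producing fewer than $2k$ vertices), in which some of the $\mb{v}_j$ coincide and $\overline\Theta$ has fewer than $k$ edges. Granting Lemma~\ref{lem:zonotope_properties}, the rest is soft: convexity of $F$ reduces the problem to vertices of $\Theta$, monotonicity of $F$ in $\theta_1$ reduces it to the right side $\overline\Theta$, and the zonogon structure supplies the explicit vertex list.
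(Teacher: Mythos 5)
Your proposal is correct and follows essentially the same route as the paper's proof: convexity of $\theta_1+J_{k+1}^*(\theta_2)$ reduces the maximization to the vertices of $\Theta$, and monotonicity in $\theta_1$ (together with the central symmetry of the zonogon placing $\mb{v}^+=\mb{v}_p$) eliminates the left-side vertices, leaving $\{\mb{v}_0,\dots,\mb{v}_p\}\subseteq\{\mb{v}_0,\dots,\mb{v}_k\}$. The only cosmetic difference is that you discard the left side by projecting onto the $\theta_1$-maximal frontier, whereas the paper compares each left-side vertex $\mb{v}_j$ to a point $\mb{v}_j^{\#}$ on the diagonal $[\mb{v}^-,\mb{v}^+]$ with the same $\theta_2$-coordinate; both are instances of the same dominance argument, and your concerns about orientation and degeneracies are exactly what the paper's counter-clockwise numbering convention and Assumption \ref{as:zonogon_max_vertices} are there to dispatch.
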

\begin{proof}
  The optimization problem described in \eqref{eq:Jstar_with_theta1_theta2}
  and \eqref{eq:set_for_theta1_theta2} is a maximization of a convex 
  function over a convex set. Therefore (see Section 32 of \citet{Rock70}),
  the maximum is achieved at the extreme points of the set $\Theta$, namely on the set 
  $\left \{\mb{v}_0, \mb{v}_1, \dots, \mb{v}_{2p-1},\mb{v}_{2p} \equiv \mb{v}_0 \right\}$,
  where $2p$ is the number of vertices of $\Theta$.
  Letting $\mb{O}$ denote the center of $\Theta$, by part (iii) of Lemma 
  \ref{lem:zonotope_properties} in the Appendix, we have that the vertex 
  symmetrically opposed to $\mb{v}^-$, 
  namely $\mb{v}^+ \bydef 2 \mb{O} - \mb{v}^-$, satisfies $\mb{v}^+ = \mb{v}_{p}$.

  Consider any vertex $\mb{v}_j$ with $j \in \{p+1,\dots,2p-1\}$. 
  From the definition of $\mb{v}^-,\mb{v}^+$,
  for any such vertex, there exists a point $\mb{v}_j^{\#} \in [\mb{v}^-,\mb{v}^+]$, 
  with the same $\theta_2$-coordinate as $\mb{v}_j$, but with 
  a $\theta_1$-coordinate larger than $\mb{v}_j$ (refer to Figure 
  \ref{fig:zonotope_R6}). Since such a point will have an objective in 
  problem \eqref{eq:Jstar_with_theta1_theta2} at least as large as 
  $\mb{v}_j$, and $\mb{v}_j^{\#} \in [\mb{v}_0,\mb{v}_p]$,
  we can immediately conclude that the maximum of problem \eqref{eq:Jstar_with_theta1_theta2}
  is achieved on the set 
  $\left\{\mb{v}_0,\dots,\mb{v}_p \right\}$. Since $2p \leq 2k$ 
  (see part (ii) of Lemma \ref{lem:zonotope_properties}), we can 
  immediately arrive at the conclusion of the lemma.
\end{proof}

Since the argument presented in the lemma will be recurring throughout several of our 
proofs and constructions, we will end this subsection by introducing two useful 
definitions, and generalizing the previous result. 

Consider the system of coordinates $(\theta_1,\theta_2)$ in $\R^2$, 
and let $\mathcal{S} \subset \R^2$ denote an arbitrary, finite set of points and 
$\mathcal{P}$ denote any (possibly non-convex) polygon such 
that its set of vertices is exactly $\mathcal{S}$. With 
$\mb{y}^- \bydef \argmax_{\theta_1} \left\{ \argmin_{\theta_2} \left\{
    \mb{\theta} \in \mathcal{S} \right\} \right\} , ~
\mb{y}^+ \bydef \argmax_{\theta_1} \left\{ \argmax_{\theta_2} \left\{ 
    \mb{\theta} \in \mathcal{S} \right\} \right\}$,
by numbering the vertices of the convex hull of $\mathcal{S}$ in a counter-clockwise fashion, 
starting at $\mb{y}_0 \bydef \mb{y}^-$, and with $\mb{y}_m = \mb{y}^+$, we define the 
\emph{right side} of $\mathcal{P}$ and the \emph{zonogon hull} of $\mathcal{S}$ as follows:
\begin{definition}
  \label{def:right_side}
  The \textbf{right side} of an arbitrary polygon $\mathcal{P}$ is:
  \begin{align}
    \rside \left( \mathcal{P} \right) &\bydef \left\{ \mb{y}_0,\mb{y}_1,\dots,\mb{y}_m \right\}.
    \label{eq:right_side_definition}
  \end{align}
\end{definition}
\begin{definition}
  \label{def:zonogon_hull}
  The \textbf{zonogon hull} of a set of points $\mathcal{S}$ is:
  \begin{align}
    \zhull \left( \mathcal{S} \right) &\bydef \left\{ \mb{y} \in \R^2 ~:~ 
      \mb{y} = \mb{y}_0 + \sum_{i=1}^m w_i \cdot \left( \mb{y}_i - \mb{y}_{i-1} \right) ~,~
      0 \leq w_i \leq 1 \right\}.
    \label{eq:zonogon_hull_definition}
  \end{align}
\end{definition}

\begin{figure*}[h]
  \begin{center}
    \begin{psfrags}      
      \psfrag{ymin}[l][][0.7]{$\mb{y}_0=\mb{y}^-$}
      \psfrag{y1}[l][][0.7]{$\mb{y}_1$}
      \psfrag{y2}[l][][0.7]{$\mb{y}_2$}
      \psfrag{y3}[l][][0.7]{$\mb{y}_3$}
      \psfrag{y4}[l][][0.7]{$\mb{y}_4$}
      \psfrag{y5}[l][][0.7]{$\mb{y}_5$}
      \psfrag{ymax}[l][][0.7]{$\mb{y}_m=\mb{y}^+$}
      \psfrag{t1}[][][0.7]{$\theta_1$}
      \psfrag{t2}[][][0.7]{$\theta_2$}
      \includegraphics[scale=0.275]{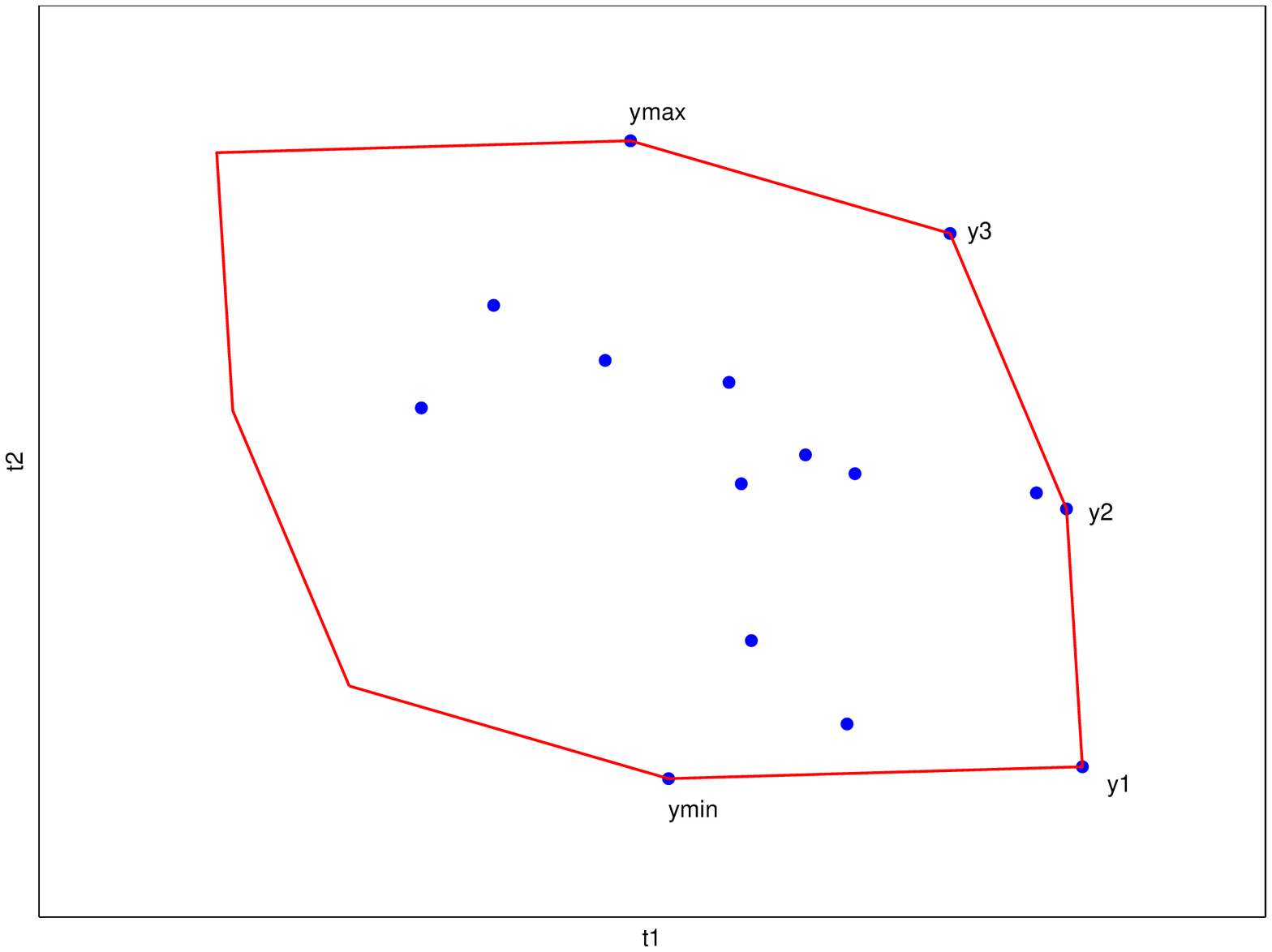}
      \includegraphics[scale=0.275]{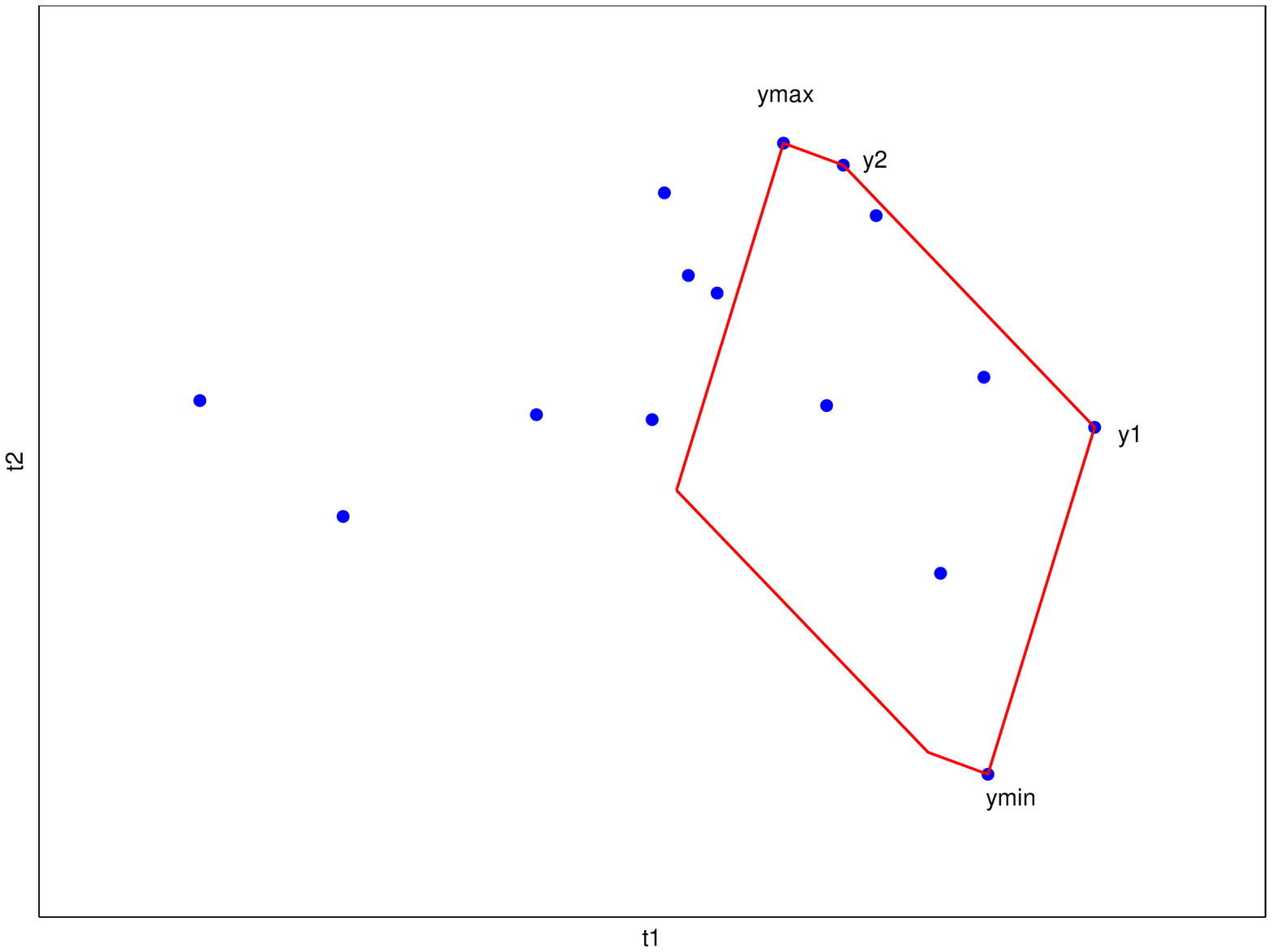}
      \includegraphics[scale=0.275]{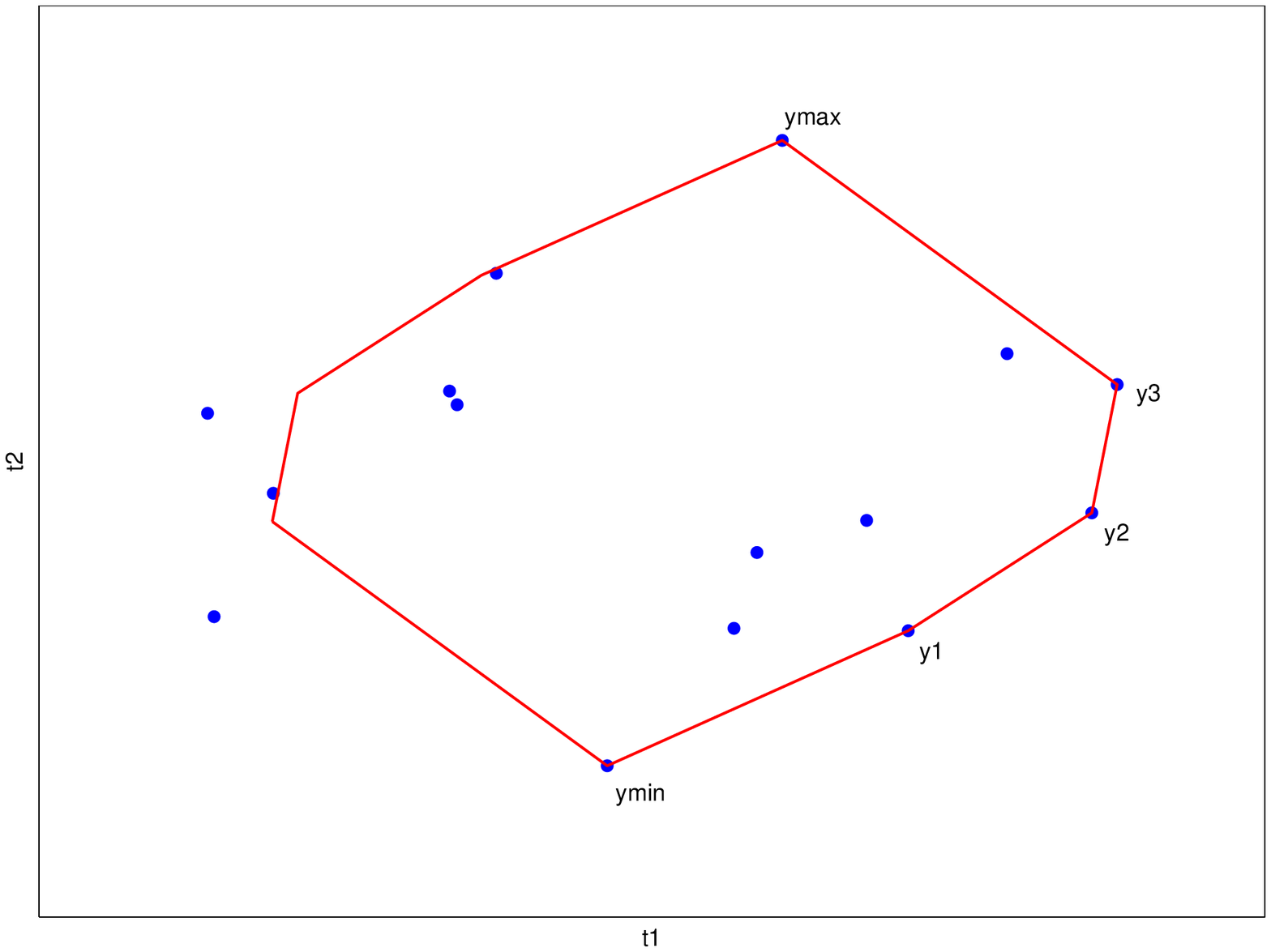}
    \end{psfrags}
    \caption{\small Examples of zonogon hulls for different sets $\mathcal{S} \in \R^2$.}
    \label{fig:zonogon_hulls}
  \end{center}
\end{figure*}

Intuitively, $\rside(\mathcal{P})$ represents exactly what the names hints at, i.e. the vertices 
found on the right side of $\mathcal{P}$. An equivalent definition using 
more familiar operators could be the following:
\begin{align*}
  \rside(\mathcal{P}) \equiv \ext \left( 
    \cone \left( 
      \left[ 
        \begin{array}{c}
          0 \\ -1
        \end{array} \right]  
    \right) + \conv\left( \mathcal{P} \right)
  \right), 
\end{align*}
where $\cone(\cdot)$ and $\conv(\cdot)$ represent the conic and convex hull, 
respectively, and $\ext(\cdot)$ denotes the set of extreme points. 

Using Definition \ref{def:zonotope} in Section \ref{sec:zonotope-properties} of 
the Appendix, one can see that the \emph{zonogon hull} of a set $\mathcal{S}$ is
simply a zonogon that has exactly the same vertices on the right side as the convex 
hull of $\mathcal{S}$, i.e. $\rside\left(\zhull\left(\mathcal{S}\right)\right) = 
\rside\left(\conv\left(\mathcal{S}\right)\right)$. Some examples of zonogon hulls 
are shown in Figure \ref{fig:zonogon_hulls} (note that the initial points in 
$\mathcal{S}$ do not necessarily fall inside the zonogon hull, and, as such, there is no general 
inclusion relation between the zonogon hull and the convex hull). The reason for 
introducing this object is that it allows for the following immediate generalization of Lemma
\ref{lem:theta_restricted_region}:
\begin{corollary}
  \label{corol:max_on_zonogon_hull}
  If $\mathcal{P}$ is any polygon in $\R^2 \equiv (\theta_1,\theta_2)$ with a finite 
  set $\mathcal{S}$ of vertices, and $f : \R \rightarrow \R$ is any convex function,
  then the following string of equalities holds:
  \begin{align*}
    \underset{(\theta_1,\theta_2) \in \mathcal{P}}{\max} 
    \left[ \, \theta_1 + f(\theta_2) \, \right] &=\,
    \underset{(\theta_1,\theta_2) \in \conv(\mathcal{P})}{\max} 
    \left[ \, \theta_1 + f(\theta_2) \, \right] \,=\,
    \underset{(\theta_1,\theta_2) \in \mathcal{S}}{\max} 
    \left[ \, \theta_1 + f(\theta_2) \, \right] \,=\, 
    \underset{(\theta_1,\theta_2) \in \rside(\mathcal{P})}{\max} 
    \left[ \, \theta_1 + f(\theta_2) \, \right] \\
    &= \,
    \underset{(\theta_1,\theta_2) \in \zhull \left( \mathcal{S} \right)}{\max} 
    \left[ \, \theta_1 + f(\theta_2) \, \right] \,=\,
    \underset{(\theta_1,\theta_2) \in \rside \left( \zhull \left( \mathcal{S} \right) 
      \right)}{\max} 
    \left[ \, \theta_1 + f(\theta_2) \, \right].
  \end{align*}
\end{corollary}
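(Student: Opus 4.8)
The plan is to prove that all six expressions equal the single value $M\bydef\max_{(\theta_1,\theta_2)\in\conv(\mathcal{P})}\bigl[\theta_1+f(\theta_2)\bigr]$, relying throughout on the one observation that $g(\theta_1,\theta_2)\bydef\theta_1+f(\theta_2)$ is convex on $\R^2$ --- it is the sum of the affine function $\theta_1$ and the composition of the convex $f$ with the projection $(\theta_1,\theta_2)\mapsto\theta_2$ --- and on the fact that each set appearing in the statement is compact, so every maximum is attained.

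First I would dispatch $\max_{\mathcal{P}}g=\max_{\conv(\mathcal{P})}g=\max_{\mathcal{S}}g$. Since $\conv(\mathcal{P})=\conv(\mathcal{S})$ is a polytope and $g$ is convex, its maximum over $\conv(\mathcal{P})$ is attained at an extreme point, and the extreme points of $\conv(\mathcal{S})$ form a subset of $\mathcal{S}$; as $\mathcal{S}\subseteq\mathcal{P}\subseteq\conv(\mathcal{P})$, sandwiching the three maxima between one another gives the claim. This is exactly the argument used in the proof of Lemma~\ref{lem:theta_restricted_region}, now stated for a general convex objective and a general (possibly non-convex) polygon.

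The crux is the equality $\max_{\rside(\mathcal{P})}g=M$, which is a clean abstraction of the ``vertical shadow'' device behind Lemma~\ref{lem:theta_restricted_region}. Write $\rside(\mathcal{P})=\{\mb{y}_0,\dots,\mb{y}_m\}$ with $\mb{y}_0=\mb{y}^-$ of minimal $\theta_2$ and $\mb{y}_m=\mb{y}^+$ of maximal $\theta_2$, as in Definition~\ref{def:right_side}. I would first establish the geometric fact that the piecewise-linear path $\mb{y}_0\mb{y}_1\cdots\mb{y}_m$ coincides with the graph $\{(\phi(t),t): t\in[\theta_2(\mb{y}^-),\theta_2(\mb{y}^+)]\}$, where $\phi(t)\bydef\max\{\theta_1:(\theta_1,t)\in\conv(\mathcal{S})\}$ is the rightmost abscissa of $\conv(\mathcal{S})$ at height $t$; this follows from the convexity of $\conv(\mathcal{S})$ together with the counter-clockwise ordering of its vertices. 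Granting it, if $\mb{p}^{\star}$ attains $M$ over $\conv(\mathcal{S})$, then $\mb{p}'\bydef\bigl(\phi(\theta_2(\mb{p}^{\star})),\theta_2(\mb{p}^{\star})\bigr)$ lies on the path, shares the $\theta_2$-coordinate of $\mb{p}^{\star}$, and has abscissa at least as large, so $g(\mb{p}')\ge g(\mb{p}^{\star})=M$; since the path is contained in $\conv(\mathcal{S})$ the reverse inequality is trivial, so $g$ attains $M$ on the path. Finally, on each segment $[\mb{y}_{i-1},\mb{y}_i]$ the coordinate $\theta_1$ is an affine function of $\theta_2$, so $g$ restricted to the segment is convex and attains its maximum at an endpoint; hence the maximum of $g$ over the path equals $\max_{0\le i\le m}g(\mb{y}_i)=\max_{\rside(\mathcal{P})}g$, proving $\max_{\rside(\mathcal{P})}g=M$.

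It remains to fold in the zonogon hull. Since $\zhull(\mathcal{S})$ is itself a convex polygon --- a zonogon with finitely many vertices --- the two steps above apply with $\mathcal{P}$ replaced by $\zhull(\mathcal{S})$, giving $\max_{\zhull(\mathcal{S})}g=\max_{\rside(\zhull(\mathcal{S}))}g$. By the property recorded immediately before the corollary, $\rside(\zhull(\mathcal{S}))=\rside(\conv(\mathcal{S}))$, and $\rside(\conv(\mathcal{S}))=\rside(\mathcal{P})$ because $\conv(\mathcal{S})=\conv(\mathcal{P})$ and the right side depends only on the convex hull of the vertex set; combining, the zonogon-hull maxima also equal $M$, closing the chain. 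The only genuinely nontrivial point is the geometric identification of the right-side path with the graph of $\phi$; everything else is convexity bookkeeping.
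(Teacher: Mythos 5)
Your proof is correct and follows essentially the same route as the paper, which simply remarks that the corollary is proved "identically to Lemma \ref{lem:theta_restricted_region}": convexity of $\theta_1+f(\theta_2)$ pushes the maximum to extreme points, and the same-$\theta_2$/larger-$\theta_1$ comparison (your $\phi$ construction, the paper's $\mb{v}_j^{\#}$) reduces to the right side, with the zonogon-hull equalities following because $\rside(\zhull(\mathcal{S}))=\rside(\conv(\mathcal{S}))$. Your write-up just makes explicit the details the paper leaves implicit.
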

\begin{proof}
  The proof is identical to that of Lemma \ref{lem:theta_restricted_region}, and is omitted 
  for brevity.
\end{proof}

Using this result, whenever we will be faced with a maximization of a convex 
function $\theta_1 + f(\theta_2)$, we will be able to switch between different feasible sets, 
without affecting the overall optimal value of the optimization problem.

In the context of Lemma \ref{lem:theta_restricted_region}, the above result allows 
us to restrict attention from a potentially 
large set of relevant points (the $2^k$ vertices of the hyper-rectangle 
$\W_1 \times \dots \times \W_k$), to the $k+1$ vertices found on the right side
of the zonogon $\Theta$, which also gives insight into why 
the construction of an affine controller $q_{k+1}(\mb{w}^{k+1})$ with $k+1$ degrees of freedom, 
yielding the same overall objective function value $J_{mM}$, might actually be possible.

In the remaining part of Section \ref{sec:induct_hypo}, we would like to further narrow 
down this set of relevant points, by using the structure and properties of the optimal 
control law $u_{k+1}^*(x_{k+1})$ and optimal value function $J_{k+1}^*(x_{k+1})$, derived in 
Section \ref{sec:DP_formulation}. Before proceeding, however, we will first reduce 
the notational clutter by introducing several simplifications and assumptions.

\subsubsection{Simplified Notation and Assumptions.}
\label{sec:simplified-notation}
To start, we will omit writing the time subscripts ($k$ or $k+1$) whenever possible, and will take:
\begin{align}
  \theta_1(\mb{w}) \bydef a_0 + \sum_{i=1}^k a_i \cdot w_i;
  \quad \theta_2(\mb{w}) \bydef b_0 + \sum_{i=1}^k b_i \cdot w_i; 
  \quad q_{k+1}(\mb{w}) \equiv q(\mb{w}) 
  \bydef q_0 + \sum_{i=1}^k q_i \cdot w_i ~, \label{eq:simple_notation_theta12}
\end{align}
where $\mb{a} = (a_1,\dots,a_k)$ and $\mb{b} = (b_1,\dots,b_k)$ are the 
\emph{generators} of the zonogon $\Theta$.
We will use the same counter-clockwise numbering of the 
vertices of $\Theta$ as introduced earlier in Section \ref{sec:induct_hypo}:
\begin{align}
  \mb{v}_0 \bydef \mb{v}^{-} , \dots, \mb{v}_p \bydef \mb{v}^+, \dots ,
  \mb{v}_{2p} = \mb{v}^{-} ~,
  \label{eq:v0_vk_counterclockwise_def}
\end{align}
where $2p$ is the number of vertices of $\Theta$, and we will use the following 
overloaded notation for $\theta_1$ and $\theta_2$: 
\begin{itemize}
\item $\theta_i(\mb{w})$, for $\mb{w} \in \R^k$, 
  will denote the $\theta_i$ value assigned 
  by the affine projection to $\mb{w}$, via 
  (\ref{eq:simple_notation_theta12}).
\item $\theta_i\left[\mb{v}\right]$, applied to $\mb{v} \in \R^2$,
  will be used to denote the $\theta_i$-coordinate of the point $\mb{v}$.
\end{itemize}
Also, since $\theta_2 \equiv x_{k+1}$, instead of referring to $J_{k+1}^*(x_{k+1})$ 
and $u_{k+1}^*(x_{k+1})$, we will use $J^*(\theta_2)$ and $u^*(\theta_2)$, 
and we will occasionally use the short-hand notations $u^*(\mb{v_i})$, $J^*(\mb{v_i})$ and 
$g\left(\mb{v}_i\right)$, instead of $u^*\left(\theta_2[\mb{v}_i] \right)$,
$J^*\left(\theta_2[\mb{v}_i] \right)$ and 
$g \left(\theta_2[\mb{v_i}] + u^*(\theta_2[\mb{v}_i])\right)$, respectively.

Since many of the sets of interest will lie in $\R^2$, for a system with coordinates
$(x,y)$, we define the following convenient notation for the cotangent of 
the angle formed by an oriented line segment $[\mb{M},\mb{N}]$ with the $x$-axis:
\begin{align}
  \cotan{\mb{M}}{\mb{N}} \bydef \frac{x_N-x_M}{y_N-y_M}, 
  \quad \text{where}~ \mb{M} = (x_M,y_M) \in \R^2, ~\mb{N} = (x_N,y_N) \in \R^2.
  \label{eq:cotan_definition}
\end{align}

We will also make the following simplifying assumptions:
\begin{assumption}
  \label{as:hypercube}
  The uncertainty vector at time $k$, $\mb{w}^k = (w_1,\dots,w_k)$, belongs to
  the unit hypercube $\H_k$ of $\R^k$, i.e. $\vlow{w}{i}=0,\vup{w}{i}=1,\, \forall\, i = 1,\dots,k$. 
\end{assumption}
\begin{assumption}
  \label{as:zonogon_max_vertices}
  The zonogon $\Theta$ has a maximal number of vertices, i.e. $p=k$.
\end{assumption}
\begin{assumption}
  \label{as:consecutive_vertices_zeros}
  The vertex of the hypercube projecting to  $\mb{v}_i, ~ i \in \{0,\dots,k\}$, 
  is exactly $[1,1,\dots,1,0,\dots,0]$, i.e. $1$ in the first $k$ components and $0$ thereafter 
  (see Figure \ref{fig:zonotope_R6}).
\end{assumption}

These assumptions are made only to facilitate the exposition, and 
result in no loss of generality. To see this, note that the conditions of Assumption \ref{as:hypercube} 
can always be achieved by adequate translation and scaling of the generators $\mb{a}$ and 
$\mb{b}$ (refer to Section \ref{sec:zonotope-properties} of the Appendix for more details), 
and Assumption \ref{as:consecutive_vertices_zeros} can be satisfied by renumbering 
the coordinates of the hyper-rectangle, i.e. renumbering the disturbances $w_1,\dots,w_k$.
As for Assumption \ref{as:zonogon_max_vertices}, we argue that an extension 
of our construction to the degenerate case $p<k$ is immediate (one could also remove 
the degeneracy by applying an infinitesimal perturbation to the generators $\mb{a}$ 
or $\mb{b}$, with infinitesimal cost implications). 

\subsubsection{Further Analysis of the Induction Hypothesis.}
\label{sec:further_underst_induc_hypo}
With this simplified notation, by using (\ref{eq:DP:Jk_star}) to express 
$J^*(\cdot)$ as a function of $u^*(\cdot)$ and $g(\cdot)$, 
we rewrite equation (\ref{eq:Jstar_with_theta1_theta2}) as follows:
\begin{equation}
  \begin{aligned}
    (OPT) \quad 
    J_{mM} &=\underset{\left(\tilde{\gamma}_1,\tilde{\gamma}_2\right) 
      \in \tilde{\Gamma}}{\max} 
    \left[~ \tilde{\gamma}_1 + g\left(\tilde{\gamma}_2\right)~\right] ~,\\
    \text{where} \quad \tilde{\Gamma} &\bydef 
    \left\{ ~(\tilde{\gamma}_1,\tilde{\gamma}_2) ~:~ 
      \tilde{\gamma}_1 \bydef \theta_1 + c \cdot u^*(\theta_2), ~~
      \tilde{\gamma}_2 \bydef \theta_2 + u^*(\theta_2), ~~
      (\theta_1,\theta_2) \in \Theta \right\}.
  \end{aligned}
  \label{eq:set_for_gamma1tilde_gamma2tilde}
\end{equation}
A characterization for the set $\tilde{\Gamma}=(\tilde{\gamma}_1,\tilde{\gamma}_2)$
can be obtained by replacing the optimal, piecewise affine control 
law $u^*(\theta_2)$, given by\footnote{For simplicity, 
  we focus on the case when 
  $g(\cdot)$ has a unique minimizer, such that $\vlow{y}{}=\vup{y}{}=y^*$
  in (\ref{eq:DP:uk_star}), (\ref{eq:DP:Jk_star}).
  The analysis could be immediately extended to the set of 
  minimizers, $[\vlow{y}{},\vup{y}{}]$.} (\ref{eq:DP:uk_star})
in equation (\ref{eq:set_for_gamma1tilde_gamma2tilde}):
\begin{align}
  \tilde{\Gamma} ~:~ 
  (\tilde{\gamma}_1,\tilde{\gamma}_2) &=
  \begin{cases}
    ~ (\theta_1+ c \cdot U, ~\theta_2 + U)~, & ~\text{if}~  \theta_2 < y^* - U \\
    ~(\theta_1 - c \cdot \theta_2 + c \cdot y^*,~ y^*)~, & ~\text{otherwise} \\
    ~ (\theta_1+ c \cdot L, ~\theta_2 + L)~, & ~\text{if}~  \theta_2 > y^* - L \\
  \end{cases}
  \label{eq:mapping_for_gamma_tilde_ustar_explicit}
\end{align}
Using the same overloaded notation\footnote{To avoid confusion, in addition to different 
types of parentheses, we will consistenly use tilded arguments to denote points $\tilde{\mb{v}}_i \in \tilde{\Gamma}$, and non-tilded arguments for $\mb{v} \in \Theta$.} 
for $\tilde{\gamma}_1, \tilde{\gamma}_2$:
\begin{itemize}
\item $\tilde{\gamma}_i(\mb{v})$, applied to $\mb{v} \in \Theta \subset \R^2$,
  will refer to the $\tilde{\gamma}_i$ value assigned by the mapping
  (\ref{eq:mapping_for_gamma_tilde_ustar_explicit}) to $\mb{v}$
\item $\tilde{\gamma}_i\left[\tilde{\mb{v}}\right]$, applied to $\tilde{\mb{v}} \in 
  \Gamma \subset \R^2$, will denote the $\tilde{\gamma}_i$ coordinate of the 
  point $\tilde{\mb{v}}$,
\end{itemize}
we can now provide a compact characterization for the maximizers in 
problem $(OPT)$ from (\ref{eq:set_for_gamma1tilde_gamma2tilde}):
\begin{lemma}
  \label{lem:maximum_in_OPT}
  The maximum in problem $(OPT)$ over $\tilde{\Gamma}$ is reached on 
  the right side of:
  \begin{align}
    \Delta_\Gamma &\bydef \conv\left(\left\{\tilde{\mb{v}}_0,
      \dots,\tilde{\mb{v}}_{k}\right\} \right)~,
    \label{eq:delta_gamma_convex_hull_defn}
  \end{align}
  where:
  \begin{align}
    \tilde{\mb{v}}_i &\bydef \left(~ \tilde{\gamma}_1(\mb{v}_i), ~ 
      \tilde{\gamma}_2(\mb{v}_i) ~\right) = 
    \left(~ \theta_1[\mb{v}_i] + c \cdot u^*(\mb{v}_i),~ 
      \theta_2[\mb{v}_i] + u^*(\mb{v}_i) ~\right), \quad 
    i \in \{0,\dots,k\}.
    \label{eq:vi_tilde_definition}
  \end{align}
\end{lemma}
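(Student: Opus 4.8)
The plan is to show that the maximization of the convex function $\tilde\gamma_1 + g(\tilde\gamma_2)$ over $\tilde\Gamma$ reduces, via Corollary \ref{corol:max_on_zonogon_hull}, to a maximization over the finite set of vertices $\{\tilde{\mb v}_0,\dots,\tilde{\mb v}_k\}$ and then to the right side of their convex hull. The subtlety is that $\tilde\Gamma$ is \emph{not} itself a zonogon or a convex polygon — it is the image of the zonogon $\Theta$ under the nonlinear map \eqref{eq:mapping_for_gamma_tilde_ustar_explicit}. So the first step is to understand the geometry of $\tilde\Gamma$. The map \eqref{eq:mapping_for_gamma_tilde_ustar_explicit} is piecewise affine in $(\theta_1,\theta_2)$, with the three pieces glued along the vertical lines $\theta_2 = y^*-U$ and $\theta_2 = y^*-L$; on the middle strip it collapses everything onto the vertical line $\tilde\gamma_2 = y^*$, on the outer strips it is a pure translation. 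I would first record that, by Lemma \ref{lem:theta_restricted_region}, the maximum in $(OPT)$ is attained at a point whose \emph{preimage} in $\Theta$ lies on the right side of $\Theta$, i.e. among $\mb v_0,\dots,\mb v_k$; but this is not yet enough, because we need the maximum to be attained on the right side of $\Delta_\Gamma = \conv\{\tilde{\mb v}_0,\dots,\tilde{\mb v}_k\}$, and we must rule out that the image of the right-side \emph{edges} of $\Theta$ escapes this convex hull in a way that matters.

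The key structural fact I would extract is that the map \eqref{eq:mapping_for_gamma_tilde_ustar_explicit} sends the right-side polyline $\mb v_0 \to \mb v_1 \to \cdots \to \mb v_k$ of $\Theta$ to a polyline $\tilde{\mb v}_0 \to \tilde{\mb v}_1 \to \cdots \to \tilde{\mb v}_k$ whose image is contained in (or at least dominated, in the sense of Corollary \ref{corol:max_on_zonogon_hull}, by) the right side of $\Delta_\Gamma$. Concretely: each edge $[\mb v_{i-1},\mb v_i]$ of $\Theta$'s right side is an affine segment with generator $(a_{\sigma(i)}, b_{\sigma(i)})$; under the piecewise-affine map it becomes either a translate of itself (on an outer strip), a vertical collapse (on the middle strip), or a concatenation of at most three such pieces if the edge straddles one of the breakpoint lines. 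I would argue that on each piece the map is affine, hence sends the segment to a segment, and that by the monotonicity properties P1/P3 of $u^*$ (slope of $u^*$ in $[-1,0]$, i.e. $f_k\in[0,1]$) the $\tilde\gamma_2$-coordinate is still nondecreasing along $\tilde{\mb v}_0,\dots,\tilde{\mb v}_k$ and the composite polyline has no "overhang" to the right of the chord joining consecutive $\tilde{\mb v}_i$'s. Therefore every point of $\tilde\Gamma$ arising from the right side of $\Theta$ has $\tilde\gamma_1$ no larger than the corresponding point (same $\tilde\gamma_2$) on the boundary of $\Delta_\Gamma$, which is exactly the hypothesis needed to invoke Corollary \ref{corol:max_on_zonogon_hull} with $\mathcal P$ the polygon on vertex set $\{\tilde{\mb v}_0,\dots,\tilde{\mb v}_k\}$ and $f = g$.

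Assembling: I would write $J_{mM} = \max_{(\theta_1,\theta_2)\in\Theta}[\theta_1 + (c\,u^*(\theta_2) + g(\theta_2+u^*(\theta_2)))]$, note by \eqref{eq:DP:Jk_star} that the bracketed function equals $\theta_1 + J^*(\theta_2)$, so by Lemma \ref{lem:theta_restricted_region} the max is over $\{\mb v_0,\dots,\mb v_k\}$; applying the map \eqref{eq:mapping_for_gamma_tilde_ustar_explicit} to these points gives the max over $\{\tilde{\mb v}_0,\dots,\tilde{\mb v}_k\}$ of $\tilde\gamma_1 + g(\tilde\gamma_2)$; then Corollary \ref{corol:max_on_zonogon_hull} upgrades this finite-set maximum to a maximum over $\conv\{\tilde{\mb v}_0,\dots,\tilde{\mb v}_k\} = \Delta_\Gamma$ and, equivalently, over $\rside(\Delta_\Gamma)$, which is the claim. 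The main obstacle I anticipate is the second step — a clean verification that no point of $\tilde\Gamma$ "beats" the vertices $\tilde{\mb v}_i$, i.e. that it is legitimate to replace the genuinely nonconvex, non-polytopal set $\tilde\Gamma$ by the convex hull of these $k+1$ images; this requires carefully tracking what the breakpoint lines $\theta_2 = y^*-U$, $\theta_2 = y^*-L$ do to the right-side edges of $\Theta$ and using the slope bounds on $u^*$ (properties P1, P3) together with the convexity/monotonicity of $g$ (properties P2, P5) to argue the dominance. Everything else is a bookkeeping application of the already-established Lemma \ref{lem:theta_restricted_region} and Corollary \ref{corol:max_on_zonogon_hull}.
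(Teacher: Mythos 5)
Your final paragraph is exactly the paper's proof: reparametrize $(OPT)$ back to $\max_{(\theta_1,\theta_2)\in\Theta}\left[\theta_1+J^*(\theta_2)\right]$ via \eqref{eq:DP:Jk_star}, invoke Lemma \ref{lem:theta_restricted_region} to conclude the maximum is attained among $\{\tilde{\mb{v}}_0,\dots,\tilde{\mb{v}}_k\}$, then apply Corollary \ref{corol:max_on_zonogon_hull} using the convexity of $g$ (property \textbf{P2}) to pass to $\rside(\Delta_\Gamma)$. The ``main obstacle'' you anticipate is illusory: since every point of $\tilde\Gamma$ is by definition the image of some point of $\Theta$ and the objective pulls back to $\theta_1+J^*(\theta_2)$, Lemma \ref{lem:theta_restricted_region} already guarantees that no point of $\tilde\Gamma$ beats the $\tilde{\mb{v}}_i$, so no tracking of how the breakpoint lines cut the right-side edges is required.
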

\begin{proof}
  By Lemma \ref{lem:theta_restricted_region}, the maximum 
  in (\ref{eq:Jstar_with_theta1_theta2}) is reached at one of the vertices 
  $\mb{v}_0,\mb{v}_1,\dots,\mb{v}_k$ of the zonogon $\Theta$. Since this problem is equivalent 
  to problem $(OPT)$ in (\ref{eq:set_for_gamma1tilde_gamma2tilde}), 
  written over $\tilde{\Gamma}$, we can immediately conclude that the maximum 
  of the latter problem is reached at the points $\{\tilde{\mb{v}}_i\}_{1 \leq i \leq k}$ 
  given by (\ref{eq:vi_tilde_definition}). Furthermore, since $g(\cdot)$ is convex
  (see Property \textbf{P2} of the optimal DP solution, in Section \ref{sec:DP_formulation}), 
  we can apply Corollary \ref{corol:max_on_zonogon_hull}, and replace 
  the points $\tilde{\mb{v}}_i$ with the right side of their convex hull, 
  $\rside\left( \Delta_\Gamma \right)$, without changing the result of the optimization problem, 
  which completes the proof.
\end{proof}

Since this result will be central to our future 
construction and proof, we will spend the remaining part of the 
subsection discussing some of the properties of the main object of interest,
the set of points on the right side of $\Delta_\Gamma$, $\rside(\Delta_\Gamma)$.
To understand the geometry of the 
set $\Delta_\Gamma$ and its connection with the optimal control law, 
note that the mapping (\ref{eq:mapping_for_gamma_tilde_ustar_explicit}) 
from $\Theta$ to $\tilde{\Gamma}$ will discriminate points
$\mb{\theta} = (\theta_1,\theta_2) \in \Theta$ depending on their position relative 
to the horizontal band:
\begin{align}
  \B \bydef \left\{~ (\theta_1,\theta_2) \in \R^2 ~:~ 
    \theta_2 \in [y^*-U,y^*-L] ~\right\}.
  \label{eq:BLU_definition}
\end{align}

In particular, from (\ref{eq:mapping_for_gamma_tilde_ustar_explicit}) and the definition 
of $\mb{v}_0, \dots, \mb{v}_k$ in (\ref{eq:v0_vk_counterclockwise_def}) and
(\ref{eq:vertex_theta2_min}), we can distinguish a total of four distinct cases.
The first three, shown in Figure \ref{fig:zono_cases_trivial}, are very easy to analyze:
\begin{figure*}[h]
  \begin{center}
    \begin{psfrags}      
      \psfrag{v0}[l][][0.8]{$\mb{v}_0=\mb{v}^-$}
      \psfrag{v1}[l][][0.8]{$\mb{v}_1$}
      \psfrag{v2}[l][][0.8]{$\mb{v}_2\,$}
      \psfrag{v3}[l][][0.8]{$\mb{v}_3$}
      \psfrag{v4}[l][][0.8]{$\mb{v}_4$}
      \psfrag{vkminus1}[l][][0.8]{$\mb{v}_{k-1}$}
      \psfrag{vk}[l][][0.8]{$\mb{v}_k=\mb{v}^+$}
      \psfrag{t1}[][][0.8]{$\theta_1$}
      \psfrag{t2}[][][0.8]{$\theta_2$}
      \psfrag{band}[][][0.8]{$\B$}
      \psfrag{ysminusL}[l][][0.8]{$y^*-L$}
      \psfrag{ysminusU}[l][][0.8]{$y^*-U$}
      \includegraphics[scale=0.28]{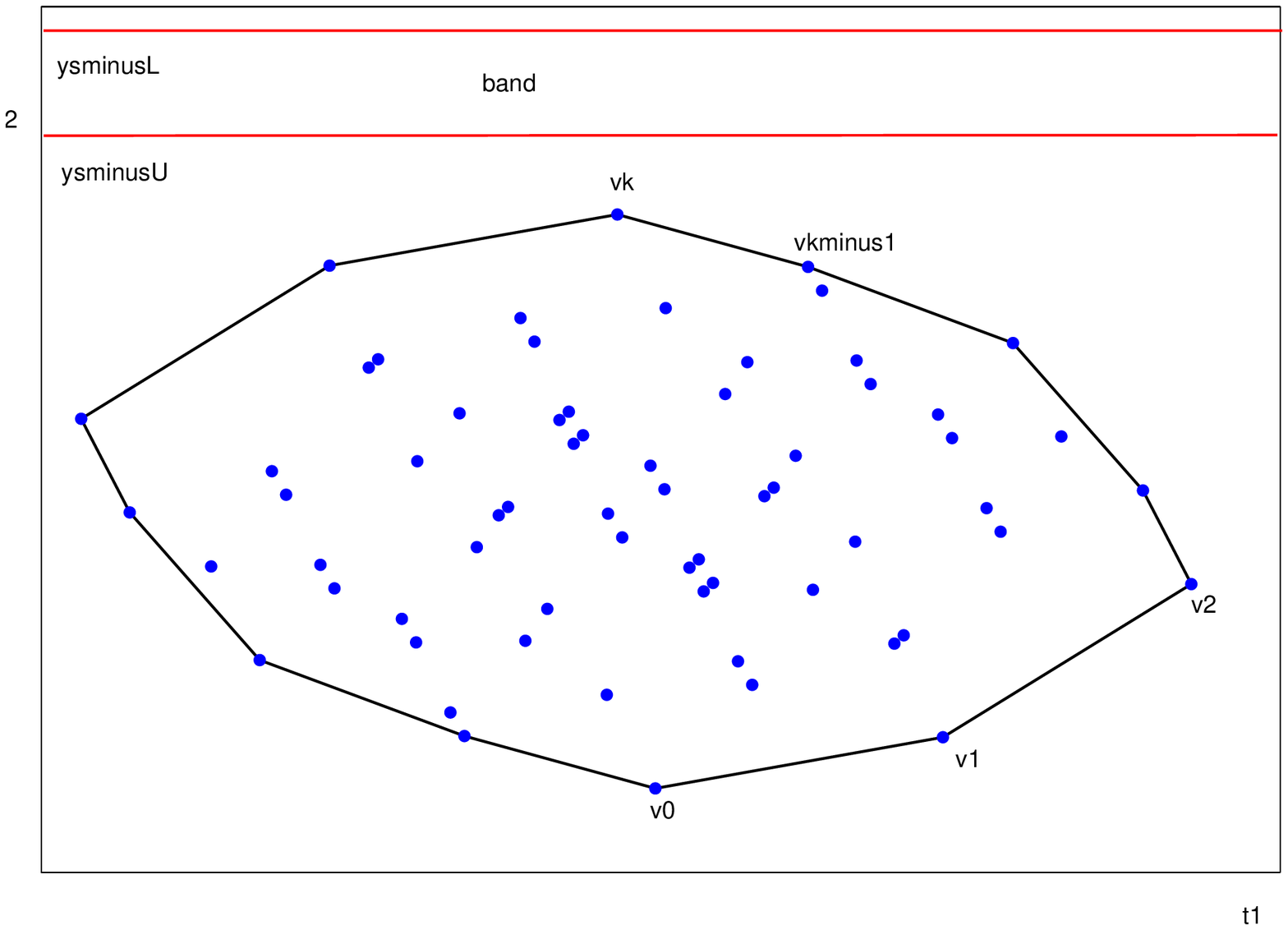}
      \includegraphics[scale=0.28]{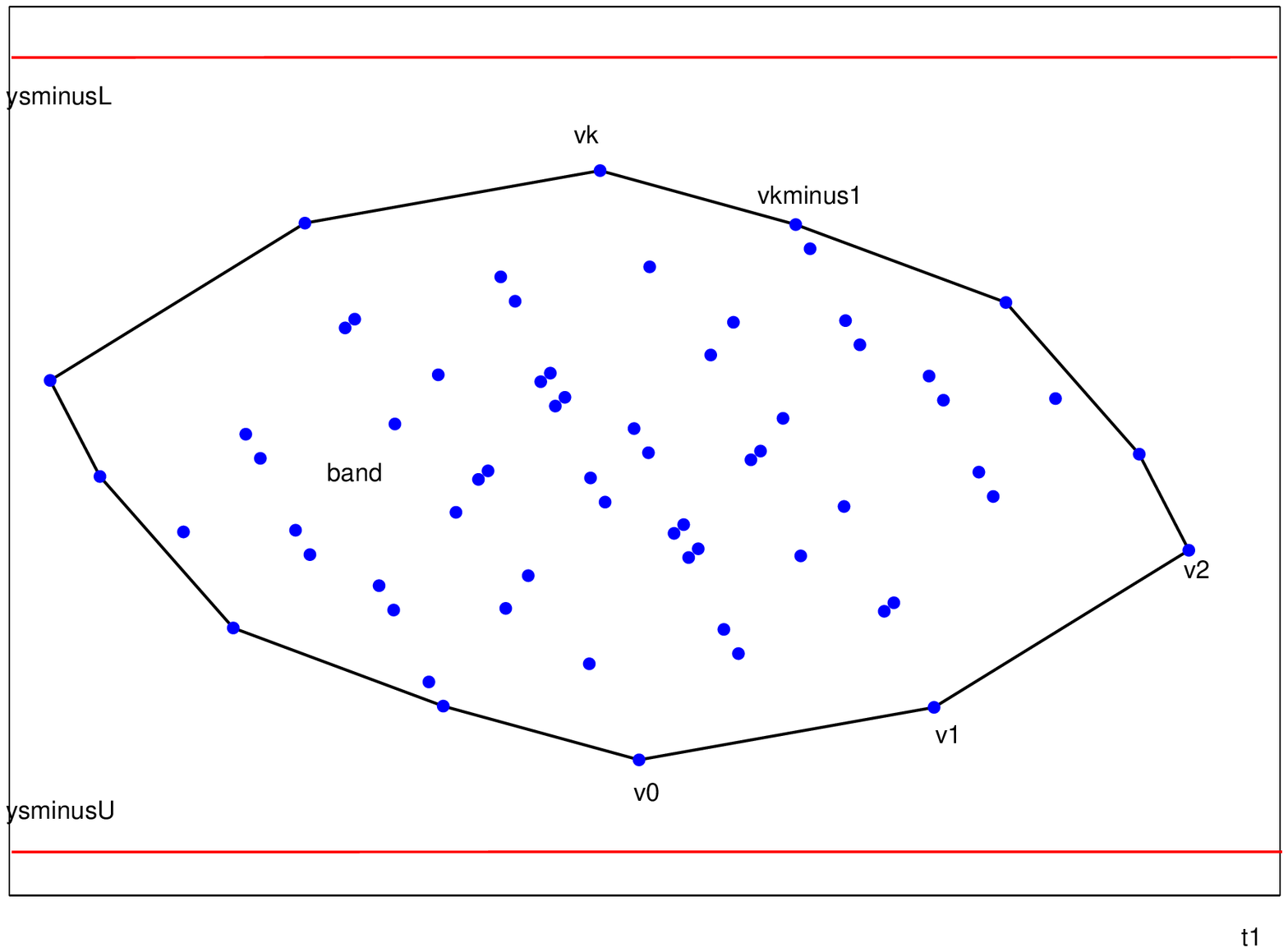}
      \includegraphics[scale=0.28]{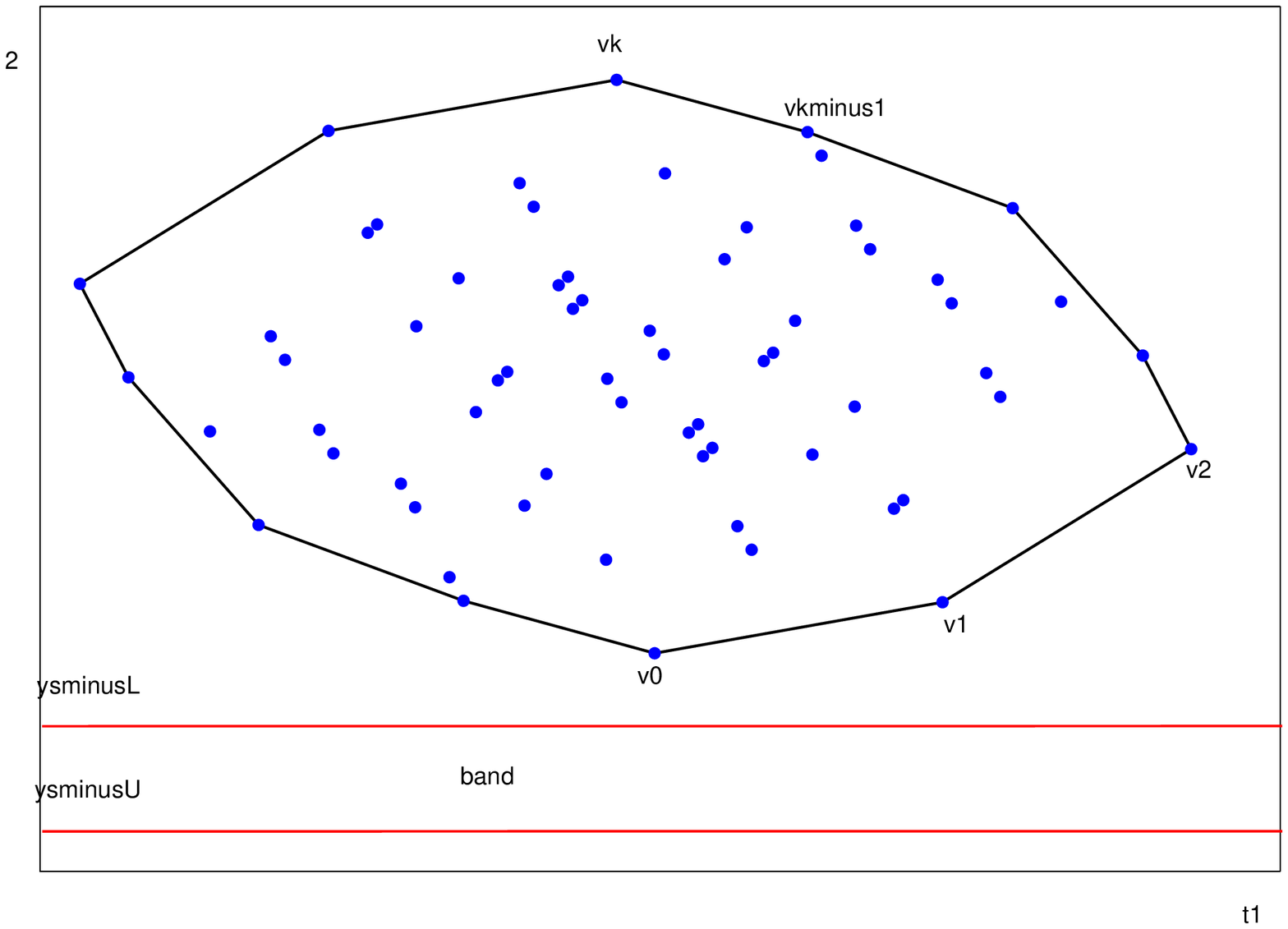}
    \end{psfrags}
    \caption{\small Trivial cases, when zonogon $\Theta$ lies entirely  
      \textbf{[C1]} below, \textbf{[C2]} inside, or \textbf{[C3]} above 
      the band $\B$.}
    \label{fig:zono_cases_trivial}
  \end{center}
\end{figure*}
\begin{itemize}
\item[\textbf{[C1]}] If the entire zonogon $\Theta$ falls below the band $\B$,
  i.e. $\theta_2\left[ \mb{v}_k \right] < y^*-U$, then 
  $\tilde{\Gamma}$ is simply a translation of $\Theta$, by 
  $(c \cdot U, U)$, so that 
  $\rside \left(\Delta_\Gamma \right) = 
  \{\tilde{\mb{v}}_0,\tilde{\mb{v}}_1,\dots,\tilde{\mb{v}}_k \}$.
\item[\textbf{[C2]}] If $\Theta$ lies inside the band $\B$, i.e. 
  $y^*-U \leq \theta_2\left[ \mb{v}_0 \right] \leq 
  \theta_2\left[ \mb{v}_k \right] \leq y^*-L$, then 
  all the points in $\tilde{\Gamma}$ will have 
  $\tilde{\gamma}_2 = y^*$, so $\tilde{\Gamma}$ will be 
  a line segment, and $\left| \rside\left(\Delta_\Gamma\right) \right| = 1$.
\item[\textbf{[C3]}] If the entire zonogon $\Theta$ falls above the band $\B$,
  i.e. $\theta_2\left[ \mb{v}_0 \right] > y^*-L$, then 
  $\tilde{\Gamma}$ is again a translation of $\Theta$, by  
  $(c \cdot L, L)$, so again 
  $\rside\left(\Delta_\Gamma\right) = \{\tilde{\mb{v}}_0,\tilde{\mb{v}}_1,\dots,\tilde{\mb{v}}_k \}$.
\end{itemize}

The remaining case, \textbf{[C4]}, is when $\Theta$ intersects the horizontal band $\B$ 
in a nontrivial fashion. We can separate this situation in the 
three sub-cases shown in Figure \ref{fig:zono_cases_nontrivial}, 
\begin{figure*}[h]
  \centering
  \begin{psfrags}
    \psfrag{vmin}[l][][0.8]{$\mb{v}_0=\mb{v}^-$}
    \psfrag{v1}[l][][0.8]{$\mb{v}_1$}
    \psfrag{v2}[l][][0.8]{$\mb{v}_2\,$}
    \psfrag{v3}[l][][0.8]{$\mb{v}_3$}
    \psfrag{v4}[l][][0.8]{$\mb{v}_4$}
    \psfrag{v5}[l][][0.8]{$\mb{v}_5$}
    \psfrag{v6}[l][][0.8]{$\mb{v}_6$}
    \psfrag{v7}[l][][0.8]{$\mb{v}_7$}
    \psfrag{vt}[l][][0.8]{$\mb{v}_t$}
    \psfrag{vmax}[l][][0.8]{$\mb{v}_k=\mb{v}^+$}
    \psfrag{y*minL}[l][][0.8]{$y^*-L$}
    \psfrag{y*minU}[l][][0.8]{$y^*-U$}
    \psfrag{band}[l][][0.8]{$\B$}
    \psfrag{t1}[][][0.8]{$\theta_1$}
    \psfrag{t2}[][][0.8]{$\theta_2$}

    \psfrag{g1}[][][0.8]{$\tilde{\gamma}_1$}
    \psfrag{g2}[][][0.8]{$\tilde{\gamma}_2$}
    \psfrag{tv0}[l][][0.8]{$\tilde{\mb{v}}_0$}
    \psfrag{tv1}[l][][0.8]{$\tilde{\mb{v}}_1$}
    \psfrag{tv2}[l][][0.8]{$\tilde{\mb{v}}_2$}
    \psfrag{tv3}[l][][0.8]{$\tilde{\mb{v}}_3$}
    \psfrag{tv4}[l][][0.8]{$\tilde{\mb{v}}_4$}
    \psfrag{tv5}[l][][0.8]{$\tilde{\mb{v}}_5$}
    \psfrag{tv6}[l][][0.8]{$\tilde{\mb{v}}_6$}
    \psfrag{tv7}[l][][0.8]{$\tilde{\mb{v}}_7$}
    \psfrag{tv8}[l][][0.8]{$\tilde{\mb{v}}_k$}
    \psfrag{tvt}[l][][0.8]{$\tilde{\mb{v}}_t$}
    \psfrag{y*}[l][][0.8]{$y^*$}
    \includegraphics[scale=0.32]{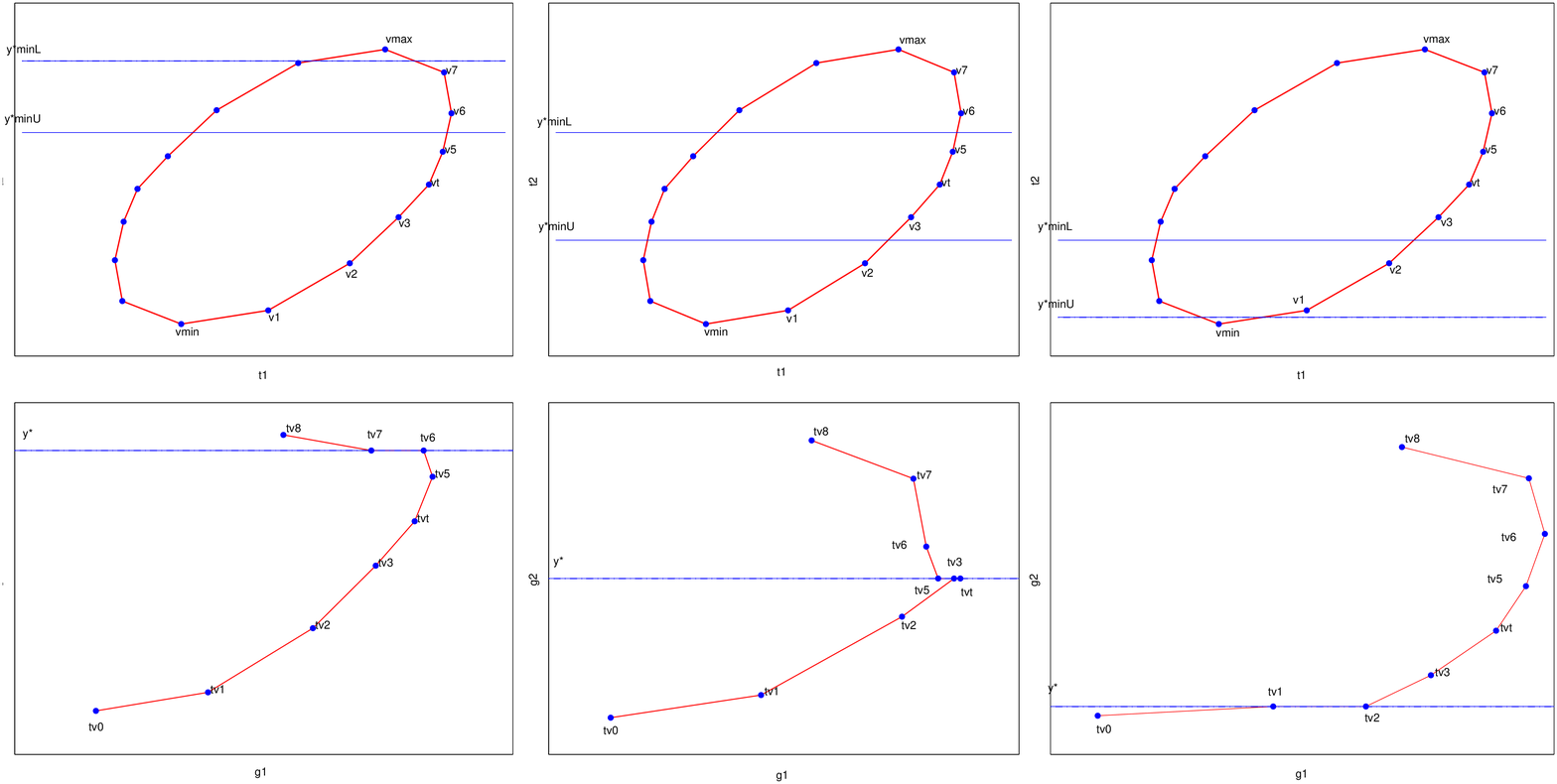}
  \end{psfrags}
  \caption{\small Case \textbf{[C4]}. Original zonogon $\Theta$ (first row) and the 
    set $\tilde{\Gamma}$ (second row) when $\mb{v}_t$ falls (a) under,
    (b) inside or (c) above the band $\B$.}
  \label{fig:zono_cases_nontrivial}       
\end{figure*}
depending on the position of the vertex $\mb{v}_t \in \rside(\Theta)$, where the 
index $t$ relates the per-unit control cost, $c$, with the geometrical 
properties of the zonogon:
\begin{align}
  t &\bydef
  \begin{cases}
    ~0~, &~\text{if}~ \frac{a_1}{b_1} \leq c \\
    ~\max \left\{i \in \{1,\dots,k\} : \frac{a_i}{b_i} > c \right\}~,
    &~\text{otherwise} ~.
  \end{cases}
  \label{eq:t_index_definition}
\end{align}
We remark that the definition of $t$ is consistent, since, by 
the simplifying Assumption \ref{as:consecutive_vertices_zeros}, the generators 
$\mb{a},\mb{b}$ of the zonogon $\Theta$ always satisfy:
\begin{align}
  \begin{cases}
    \frac{a_1}{b_1} > \frac{a_2}{b_2} > \dots > \frac{a_k}{b_k} \\
    b_1,b_2,\dots,b_k \geq 0.
  \end{cases}
  \label{eq:conditions_generators_theta}
\end{align}
An equivalent characterization of $\mb{v}_t$ can be obtained 
as the result of an optimization problem:
\begin{align}
  \label{eq:vt_as_result_of_optimization}
  \mb{v}_t &\equiv \argmin_{\theta_2} \left\{ \argmax_{(\theta_1,\theta_2) \in \Theta}
  \left\{ \theta_1 - c \cdot \theta_2 \right\} \right\}.
\end{align}

The following lemma summarizes all the relevant geometrical properties 
corresponding to this case:
\begin{lemma}
  \label{lem:rside_delta_gamma}
  When the zonogon $\Theta$ has a non-trivial intersection with 
  the band $\B$ (case \textup{\textbf{[C4]}}), 
  the convex polygon $\Delta_\Gamma$ and the set of 
  points on its right side, $\rside(\Delta_\Gamma)$, satisfy the following
  properties:
  \begin{enumerate}
  \item  $\rside(\Delta_\Gamma)$ is the union of two sequences of consecutive 
    vertices (one starting at $\tilde{\mb{v}}_0$, and one ending at 
    $\tilde{\mb{v}}_k$), and possibly an additional vertex, $\tilde{\mb{v}}_t$:
    \begin{equation*}
      \rside(\Delta_\Gamma) = \left\{\tilde{\mb{v}}_0,\tilde{\mb{v}}_1,\dots,
        \tilde{\mb{v}}_{s} \right\} \cup \{\tilde{\mb{v}}_t\} 
      \cup \left\{ \tilde{\mb{v}}_{r},\tilde{\mb{v}}_{r+1}
        \dots,\tilde{\mb{v}}_{k}\right\}
      ,~ \text{for some}~ 
      s \leq r \in \{0,\dots,k\}.
    \end{equation*}
  \item With $\cotan{\cdot}{\cdot}$ given by \eqref{eq:cotan_definition} applied 
    to the $(\tilde{\gamma}_1,\tilde{\gamma}_2)$ coordinates, we have that:
    \begin{align}
      \begin{cases}
        \cotan{\tilde{\mb{v}}_{s}}{\tilde{\mb{v}}_{\min(t,r)}}
        \geq \frac{a_{s+1}}{b_{s+1}}, & \textup{whenever $t > s$}\\
        \cotan{\tilde{\mb{v}}_{\max(t,s)}}{\tilde{\mb{v}}_{r}} \leq \frac{a_{r}}{b_{r}},
        & \textup{whenever $t < r$}.
      \end{cases}
      \label{eq:properties_KL_KU}
    \end{align}
  \end{enumerate}
\end{lemma}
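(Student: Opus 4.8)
\emph{Reduction to a planar point set.}
The lemma should be proved by first uncovering an invariant hidden in the map \eqref{eq:mapping_for_gamma_tilde_ustar_explicit}: on each of its three branches one has $\tilde\gamma_1-c\,\tilde\gamma_2=\theta_1-c\,\theta_2$. Writing $\delta:=\theta_1-c\,\theta_2$, so $\tilde\gamma_1=\delta+c\,\tilde\gamma_2$, the passage to coordinates $(\delta,\tilde\gamma_2)$ is a shear along the first axis, and since $\{\tilde\gamma_1+\lambda\tilde\gamma_2:\lambda\in\R\}=\{\delta+\mu\tilde\gamma_2:\mu\in\R\}$, a vertex lies on the right side before the shear exactly when it does after. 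Hence it suffices to study $\rside$ (with respect to $\delta$) of $\conv\{\mb p_0,\dots,\mb p_k\}$, where $\mb p_j:=(\delta_j,\phi_j)$, $\delta_j:=\theta_1[\mb v_j]-c\,\theta_2[\mb v_j]$, and $\phi_j:=\tilde\gamma_2[\tilde{\mb v}_j]=f(\theta_2[\mb v_j])$ with $f$ the continuous, non-decreasing, $1$-Lipschitz piecewise-affine function equal to $\theta_2+U$ on $(-\infty,y^*-U]$, to $y^*$ on $[y^*-U,y^*-L]$, and to $\theta_2+L$ on $[y^*-L,\infty)$. In particular $\phi_j<y^*$, $=y^*$, or $>y^*$ according as $\mb v_j$ lies below, inside, or above the band $\B$ of \eqref{eq:BLU_definition}.

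\emph{Global shape.}
Two monotonicities pin down the coarse structure. First, $b_j\ge0$ by \eqref{eq:conditions_generators_theta}, so $\theta_2[\mb v_j]$, and hence $\phi_j$, is non-decreasing in $j$. Second, $\delta_j-\delta_{j-1}=a_j-c\,b_j=b_j(a_j/b_j-c)$, which by \eqref{eq:conditions_generators_theta}--\eqref{eq:t_index_definition} is positive for $j\le t$ and negative for $j>t$; so $\delta_j$ is strictly unimodal with unique maximizer $j=t$, whence $\tilde{\mb v}_t$ is the unique rightmost vertex and lies on $\rside(\Delta_\Gamma)$. Consequently $\mb p_0$ has least $\phi$ and $\mb p_k$ greatest $\phi$, so $\rside(\Delta_\Gamma)$ is a single right boundary chain running from $\mb p_0$ up to $\mb p_k$ through $\mb p_t$; since two points sharing $\phi=y^*$ contribute only the $\delta$-larger one to $\rside$, the vertices of $\rside(\Delta_\Gamma)$ occur in strictly increasing index order, and (convexity of a right chain) the cotangents of its successive edges, computed in the $(\delta,\tilde\gamma_2)$-coordinates, are strictly decreasing.

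\emph{The run-plus-$\tilde{\mb v}_t$ structure (the crux).}
The core of part~1 is to show that $A:=\{\,j:\tilde{\mb v}_j\in\rside(\Delta_\Gamma)\,\}$ is of the form $\{0,\dots,s\}\cup\{t\}\cup\{r,\dots,k\}$, i.e.\ that, apart from $t$, $A$ consists of an initial and a final block of consecutive indices. I would obtain this from three facts, applied through the three sub-cases of Figure~\ref{fig:zono_cases_nontrivial} (position of $\mb v_t$ relative to $\B$). (i) On the range of $j$ with $\mb v_j$ below $\B$, the map $\mb v_j\mapsto\mb p_j$ is a translation by $(cU,U)$ composed with the shear, so consecutive edges there have cotangents $a_j/b_j-c$ and are strictly decreasing; hence the below-$\B$ vertices form a right-side-convex sub-arc, and likewise the above-$\B$ vertices (translation by $(cL,L)$). (ii) The in-$\B$ vertices all collapse onto $\tilde\gamma_2=y^*$, so at most one of them lies on $\rside(\Delta_\Gamma)$ --- the $\delta$-maximal one, which by unimodality of $\delta_j$ is $\mb p_t$ precisely when $\mb v_t\in\B$. (iii) Along a right-side-convex sub-arc the chain $\rside(\Delta_\Gamma)$ can visit only a contiguous sub-block of vertices and can never skip an internal one, since a skipped internal vertex would lie strictly to the right of the chord bridging it, hence outside $\conv(\Delta_\Gamma)$; moreover, because $\rside$-vertices are met in increasing $\phi$ (equivalently, increasing index) order, the chain cannot return to a region once it has left it. Facts (i) and (iii) force $A$ to meet the below-$\B$ indices in an initial block $\{0,\dots,s\}$ and the above-$\B$ indices in a final block $\{r,\dots,k\}$; by (ii) these are joined by at most the single vertex $\tilde{\mb v}_t$, which lies strictly between the two blocks when $\mb v_t\in\B$ and is otherwise absorbed into the adjacent block. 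A direct inspection in each sub-case --- again using (iii), and the $1$-Lipschitz behaviour of $f$ at the two band boundaries --- then rules out the last danger, that a surviving in-$\B$ vertex distinct from $\tilde{\mb v}_t$ be isolated from both blocks (one shows it can only be reached by the chain from the adjacent endpoint of the neighbouring sub-arc). I expect this bookkeeping at $\tilde\gamma_2=y^*-U$ and $\tilde\gamma_2=y^*-L$, together with the explicit identification of $s$ and $r$ in each of the three sub-cases, to be the main obstacle of the proof: it is elementary, but requires care with the cotangents where $\B$ is entered and left.

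\emph{The cotangent bounds (part~2).}
Granting part~1, \eqref{eq:properties_KL_KU} follows quickly. If $t>s$ then $\tilde{\mb v}_{s+1}\notin\rside(\Delta_\Gamma)$, so $\mb p_{s+1}$ lies weakly to the left of the $\rside$-edge joining $\mb p_s$ to its successor $\mb p_{\min(t,r)}$; since $\phi_s<\phi_{s+1}\le\phi_{\min(t,r)}$, comparing $\delta$-coordinates at height $\phi_{s+1}$ gives $(\delta_{s+1}-\delta_s)/(\phi_{s+1}-\phi_s)\le\kappa$, where $\kappa$ is the cotangent of that edge in $(\delta,\tilde\gamma_2)$-coordinates. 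Because $s+1\le t$, we have $\delta_{s+1}-\delta_s=b_{s+1}(a_{s+1}/b_{s+1}-c)>0$, while $\phi_{s+1}-\phi_s\le b_{s+1}$ ($f$ being $1$-Lipschitz), so the left-hand side is at least $a_{s+1}/b_{s+1}-c$; and since $\tilde\gamma_1=\delta+c\,\tilde\gamma_2$, the cotangent of the same edge in $(\tilde\gamma_1,\tilde\gamma_2)$-coordinates equals $\kappa+c$. Hence $\cotan{\tilde{\mb v}_s}{\tilde{\mb v}_{\min(t,r)}}=\kappa+c\ge a_{s+1}/b_{s+1}$. The second inequality of \eqref{eq:properties_KL_KU} follows by the mirror argument near the upper band boundary: when $t<r$, $\mb p_{r-1}$ lies weakly left of the $\rside$-edge joining $\mb p_{\max(t,s)}$ to $\mb p_r$, and $\delta_r-\delta_{r-1}=b_r(a_r/b_r-c)<0$ forces $\cotan{\tilde{\mb v}_{\max(t,s)}}{\tilde{\mb v}_r}\le a_r/b_r$. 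This completes the proof.
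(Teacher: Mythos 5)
Your proposal is correct in substance but reaches the result by a genuinely different route from the paper. Your key move is the observation that the three-branch map \eqref{eq:mapping_for_gamma_tilde_ustar_explicit} preserves $\delta:=\theta_1-c\,\theta_2$, so that a shear to $(\delta,\tilde\gamma_2)$-coordinates (which preserves $\rside$) turns the whole configuration into a planar set with $\delta_j$ strictly unimodal (peak at $j=t$) and $\phi_j$ non-decreasing and $1$-Lipschitz in the increments $b_j$. The paper never introduces this invariant: it works directly in $(\tilde\gamma_1,\tilde\gamma_2)$, tracks the three blocks via the auxiliary indices $\hat s,\hat r$ of \eqref{eq:shat_rhat_index_definition}, characterizes $s$ and $r$ explicitly in \eqref{eq:solution_sr_indices}, and for part~2 splits into the cases $s<\hat s-1$ (mediant inequality) and $s=\hat s-1$ (the auxiliary boundary point $M$ of \eqref{eq:point_M_in_Theta_def}). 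Your shear buys a uniform, case-free proof of part~2: the chain $\kappa\ge(\delta_{s+1}-\delta_s)/(\phi_{s+1}-\phi_s)\ge a_{s+1}/b_{s+1}-c$ subsumes both of the paper's cases, with the $1$-Lipschitz bound $\phi_{s+1}-\phi_s\le b_{s+1}$ playing the role of the point $M$. What the paper's version buys is an explicit identification of $s$ and $r$, which is then reused in Algorithm~1 and Lemma \ref{lem:affine_control_coefficients_robust}.

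Two small imprecisions, neither fatal. First, in part~2 you assert that $t>s$ implies $\tilde{\mb{v}}_{s+1}\notin\rside(\Delta_\Gamma)$; this fails when $s+1=\min(t,r)$, but then $\mb{p}_{s+1}$ is the endpoint of the edge in question, the comparison at height $\phi_{s+1}$ holds with equality, and your inequality chain survives unchanged --- just state "weakly to the left" without the membership claim. Second, in fact (iii) a skipped internal vertex does not lie "outside $\conv(\Delta_\Gamma)$" (it is one of the points generating that hull); the correct contradiction is that it would lie strictly to the right of a segment joining two consecutive $\rside$-vertices, i.e.\ strictly to the right of an edge supporting $\Delta_\Gamma$. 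Finally, be aware that your part~1 defers exactly the boundary bookkeeping (behaviour at $\tilde\gamma_2=y^*-U$ and $y^*-L$, and the three sub-cases for the position of $\mb{v}_t$) that constitutes the bulk of the paper's proof; your framework does make it easier, but a complete write-up still has to carry it out.
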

While the proof of the lemma is slightly technical, which is why we have decided to 
leave it for Section \ref{sec:technical-lemmas} of the Appendix, its implications are more straightforward.
In conjuction with Lemma \ref{lem:maximum_in_OPT}, 
it provides a compact characterization of the points 
$\tilde{\mb{v}}_i \in \tilde{\Gamma}$ which are potential maximizers of 
problem $(OPT)$ in (\ref{eq:set_for_gamma1tilde_gamma2tilde}), which immediately 
narrows the set of relevant points $\mb{v}_i \in \Theta$ in optimization 
problem (\ref{eq:Jstar_with_theta1_theta2}), and, implicitly, the set 
of disturbances $\mb{w} \in \H_k$ that can achieve the min-max cost $J_{mM}$.

\subsection{Construction of the Affine Control Law.}
\label{sec:constr-affine-contr}
Having analyzed the consequences that result from using the induction 
hypothesis of Theorem \ref{thm:main_theorem}, we now return to the task of 
completing the inductive proof, which amounts to constructing 
an affine control law $q_{k+1}(\mb{w}^{k+1})$ and an affine cost 
$z_{k+1}(\mb{w}^{k+2})$ that verify conditions (\ref{eq:constr:qk_robustly_feasible}),
(\ref{eq:constr:affine_run_cost}) and (\ref{eq:constr:same_objective})
in Theorem \ref{thm:main_theorem}. We will separate this task into 
two parts. In the current section, 
we will exhibit an affine control law $q_{k+1}(\mb{w}^{k+1})$ that 
is robustly feasible, i.e. satisfies constraint (\ref{eq:constr:qk_robustly_feasible}), 
and that leaves the overall min-max cost $J_{mM}$ unchanged, when used at time $k+1$
in conjunction with the original convex state cost, $h_{k+1}(x_{k+2})$. 
The second part of the induction, i.e. the 
construction of the affine costs $z_{k+1}(\mb{w}^{k+2})$ satisfying 
(\ref{eq:constr:affine_run_cost}) and (\ref{eq:constr:same_objective}), 
will be left for Section \ref{sec:construction_affine_stage_cost}.

In the simplified notation introduced earlier, the problem we would like to solve 
is to find an affine control law $q(\mb{w})$ such that:
\begin{align*}
  J_{mM} &= \underset{\mb{w} \in [0,1]^k}{\max} 
  \left[~ \theta_1 + c \cdot q(\mb{w}) + 
    g\left(\theta_2+q(\mb{w})\right)~\right] \\
  L &\leq q(\mb{w}) \leq U ~, \quad \forall \, \mb{w} \in \H_k.
\end{align*}

The maximization represents the problem solved by the disturbances, 
when the affine controller, $q(\mb{w})$, is used instead of the 
optimal controller, $u^*(\theta_2)$. As such, the first equation 
amounts to ensuring that the overall objective function $J_{mM}$ remains unchanged, 
and the inequalities are a restatement of the robust feasibility condition.
The system can be immediately rewritten as:
\begin{align}
  \begin{aligned}
    (AFF) \quad 
    J_{mM} &=\underset{\left(\gamma_1,\gamma_2\right) 
      \in \Gamma}{\max} 
    \left[~ \gamma_1 + g\left(\gamma_2\right)~\right]  \\
    L &\leq q(\mb{w}) \leq U \quad ,
  \end{aligned}
  \label{eq:aff_problem}
\end{align}
where:
\begin{align}
  \Gamma &\bydef 
  \left\{ ~(\gamma_1,\gamma_2) ~:~ 
    \gamma_1 \bydef \theta_1 + c \cdot q(\mb{w}), ~~
    \gamma_2 \bydef \theta_2 + q(\mb{w}), ~~
    (\theta_1,\theta_2) \in \Theta \right\}.
  \label{eq:set_for_gamma1_gamma2}
\end{align}

With this reformulation, all our decision variables, i.e. the affine coefficients of $q(\mb{w})$,
have been moved to the feasible set $\Gamma$ of the maximization problem $(AFF)$ in 
(\ref{eq:aff_problem}). Note that, with an affine controller 
$q(\mb{w}) = q_0 + \scprod{q}{w}$, and $\theta_1,\theta_2$ affine in $\mb{w}$  
(\ref{eq:simple_notation_theta12}), the feasible set $\Gamma$ will represent a new zonogon 
in $\R^2$, with generators given by $\mb{a} + c \cdot \mb{q}$ and $\mb{b} + \mb{q}$. 
Furthermore, since the function $g$ is convex, the optimization problem $(AFF)$ over 
$\Gamma$ is of the exact same nature as that in (\ref{eq:Jstar_with_theta1_theta2}), defined 
over the zonogon $\Theta$. Thus, in perfect analogy with our discussion 
in Section \ref{sec:induct_hypo} (Lemma \ref{lem:theta_restricted_region} and 
Corollary \ref{corol:max_on_zonogon_hull}), we can 
conclude that the maximum in $(AFF)$ will occur at a vertex of $\Gamma$ found in 
$\rside(\Gamma)$.

In a different sense, note that optimization problem $(AFF)$ is also very similar to 
problem $(OPT)$ in (\ref{eq:set_for_gamma1tilde_gamma2tilde}), which 
was the problem solved by the uncertainties $\mb{w}$ when 
the optimal control law, $u^*(\theta_2)$, was used at time $k+1$. 
Since the optimal value of the latter problem 
is exactly equal to the overall min-max value, $J_{mM}$, we will interpret the 
equation in (\ref{eq:aff_problem}) as comparing the optimal values in the 
two optimization problems, $(AFF)$ and $(OPT)$. 

As such, note that the same convex 
objective function, $\xi_1 + g(\xi_2)$, is maximized in both problems, but over different feasible 
sets, $\tilde{\Gamma}$ for $(OPT)$ and $\Gamma$ for $(AFF)$, 
respectively. From Lemma \ref{lem:maximum_in_OPT} in Section \ref{sec:further_underst_induc_hypo},
the maximum of problem $(OPT)$ is reached on the set $\rside(\Delta_\Gamma)$, where 
$\Delta_{\Gamma} = \conv \left( \left\{ \tilde{\mb{v}}_0, \tilde{\mb{v}}_1, \dots, 
\tilde{\mb{v}}_k \right\} \right)$. From the discussion in the 
previous paragraph, the maximum in problem $(AFF)$ occurs on $\rside(\Gamma)$. 
Therefore, in order to compare the 
two results of the maximization problems, we must relate the sets 
$\rside(\Delta_\Gamma)$ and $\rside(\Gamma)$. 

In this context, we introduce the central idea behind the construction of the affine 
control law, $q(\mb{w})$. Recalling the concept of a \emph{zonogon hull} introduced in Definition 
\ref{def:zonogon_hull}, we argue that, if the affine coefficients of the 
controller, $q_0,\mb{q}$, were computed 
in such a way that the zonogon $\Gamma$ actually corresponded to the zonogon hull of the 
set $\left\{ \tilde{\mb{v}}_0, \tilde{\mb{v}}_1, \dots, \tilde{\mb{v}}_k \right\}$, then, 
by using the result in Corollary \ref{corol:max_on_zonogon_hull}, we could 
immediately conclude that the optimal values in $(OPT)$ and $(AFF)$ are the same.

To this end, we introduce the following procedure for computing 
the affine control law $q(\mb{w})$:
\begin{algorithm}[H]
  \caption{Compute affine controller $q(\mb{w})$}
  \label{alg1:findq}
  \begin{algorithmic}[1]
    \REQUIRE $\theta_1(\mb{w}), \theta_2(\mb{w}), g(\cdot), u^*(\cdot)$
    \IF{($\Theta$ falls below $\B$) \textbf{or} ($\Theta \subseteq \B$) 
      \textbf{or} 
      ($\Theta$ falls above $\B$)}
    \STATE Return $q(\mb{w}) = u^*(\theta_2(\mb{w}))$.
    \ELSE
    \STATE Apply the mapping \eqref{eq:mapping_for_gamma_tilde_ustar_explicit}
    to obtain the points $\tilde{\mb{v}}_i, ~i \in \{0,\dots,k\}$.
    \STATE Compute the set 
    $\Delta_{\Gamma} = 
    \conv \left( \left\{\tilde{\mb{v}}_0,\dots,\tilde{\mb{v}}_{k}\right\} \right)$.
    \STATE Let $\rside(\Delta_\Gamma) 
    = \{\tilde{\mb{v}}_{0},\tilde{\mb{v}}_{1},\dots,\tilde{\mb{v}}_{s}\} \cup \{\tilde{\mb{v}}_{t}\}
    \cup \{\tilde{\mb{v}}_{r},\dots,\tilde{\mb{v}}_{k}\}$ 
    be the set of points on the right side of $\Delta_\Gamma$.
    \STATE Solve the following system for $q_0,\dots,q_k$ and $K_U,K_L$:
    \begin{equation}
      (S) \qquad 
      \left\{
        \begin{aligned}
          q_0 + \dots + q_{i} &= u^*\left(\mb{v}_{i} \right), 
          && \forall\, \tilde{\mb{v}}_i \in \rside(\Delta_\Gamma) 
          && \text{(matching)}  \\
          \frac{a_i+c \cdot q_i}{b_i + q_i} &= K_{U}, 
          && \forall\, i \in \{s+1,\dots,\min(t,r)\} 
          && \text{(alignment below $t$)}  \\
          \frac{a_i+c \cdot q_i}{b_i + q_i} &= K_{L}, 
          && \forall\, i \in \{\max(t,s)+1,\dots,r\}
          && \text{(alignment above $t$)}
        \end{aligned}
      \right.
      \label{eq:system_for_q_coefficients}
    \end{equation}
    \STATE Return $q(\mb{w}) = q_0 + \sum_{i=1}^k q_i w_i$.
    \ENDIF
  \end{algorithmic}
\end{algorithm}

Before proving that the construction is well-defined and does produce the expected result, 
we will give some intuition for the constraints in system (\ref{eq:system_for_q_coefficients}).
In order to have the zonogon $\Gamma$ be the same as the zonogon hull of 
$\left\{\tilde{\mb{v}}_0,\dots,\tilde{\mb{v}}_{k}\right\}$, 
we must ensure that the vertices on the right side of $\Gamma$ exactly 
correspond to the points on the right side of 
$\Delta_\Gamma = \conv \left( \left\{\tilde{\mb{v}}_0,\dots,\tilde{\mb{v}}_{k}\right\} \right)$.
This is achieved in two stages. First, we ensure that vertices $\mb{w}_i$ of 
the hypercube $\H_k$ that are mapped by the optimal control law $u^*(\cdot)$ into
points $\tilde{\mb{v}}_i \in \rside(\Delta_\Gamma)$ (through the succession of mappings 
$\mb{w}_i \overset{\tiny \eqref{eq:set_for_theta1_theta2}}{\mapsto} 
\mb{v}_i \in \rside(\Theta) \overset{\tiny \eqref{eq:vi_tilde_definition}}{\mapsto}
\tilde{\mb{v}}_i \in \rside(\Delta_\Gamma)$), will be mapped by the affine 
control law, $q(\mb{w}_i)$, into the same point $\tilde{\mb{v}}_i$ 
(through the mappings $\mb{w}_i \overset{\tiny \eqref{eq:set_for_theta1_theta2}}{\mapsto} 
\mb{v}_i \in \rside(\Theta) \overset{\tiny \eqref{eq:set_for_gamma1_gamma2}}{\mapsto}
\tilde{\mb{v}}_i \in \rside(\Delta_\Gamma)$). This is done in the first set of 
constraints, by \emph{matching} the value of the optimal control law 
at any such points. Second, we ensure that any such matched points $\tilde{\mb{v}}_i$ 
actually correspond to the vertices on the right side of the zonogon 
$\Gamma$. This is done in the second and third set of constraints in 
(\ref{eq:system_for_q_coefficients}), by computing the affine coefficients $q_j$ 
in such a way that the resulting segments in the generators of the zonogon $\Gamma$, namely 
$\left(\begin{array}{c}a_j + c \cdot q_j \\ b_j +  q_j\end{array}\right)$,
are all \emph{aligned}, i.e. have the same cotangent, given by 
the $K_U,K_L$ variables. Geometrically, this exactly corresponds to 
the situation shown in Figure \ref{fig:matching_affine_control} below.

\begin{figure}[h]
  \centering
  \begin{psfrags}
    \psfrag{vmin}[l][][0.8]{$\mb{v}_0=\mb{v}^-$}
    \psfrag{v1}[l][][0.8]{$\mb{v}_1$}
    \psfrag{v2}[l][][0.8]{$\mb{v}_2$}
    \psfrag{v3}[l][][0.8]{$\mb{v}_3$}
    \psfrag{v4}[l][][0.8]{$\mb{v}_4=\mb{v}_t$}
    \psfrag{v5}[l][][0.8]{$\mb{v}_5$}
    \psfrag{v6}[l][][0.8]{$\mb{v}_6$}
    \psfrag{v7}[l][][0.8]{$\mb{v}_7$}
    \psfrag{vmax}[l][][0.8]{$\mb{v}_k=\mb{v}^+$}
    \psfrag{y*minL}[l][][0.8]{$y^*-L$}
    \psfrag{y*minU}[l][][0.8]{$y^*-U$}
    \psfrag{band}[l][][0.8]{$\B$}
    \psfrag{bigM}[l][][0.8]{}
    \psfrag{bigN}[l][][0.8]{}
    \psfrag{t1}[][][0.8]{$\theta_1$}
    \psfrag{t2}[][][0.8]{$\theta_2$}
    \psfrag{xlabel}[c][][0.8]{Original zonotope $\Theta$.}
    
    \psfrag{g1}[][][0.8]{$\tilde{\gamma}_1$}
    \psfrag{g2}[][][0.8]{$\tilde{\gamma}_2$}
    \psfrag{tv0}[l][][0.8]{$\tilde{\mb{v}}_0=\mb{y}_0$}
    \psfrag{tv1}[l][][0.8]{$\tilde{\mb{v}}_s=\mb{y}_s$}
    \psfrag{tv2}[l][][0.8]{$\tilde{\mb{v}}_2$}
    \psfrag{tv3}[l][][0.8]{$\tilde{\mb{v}}_3$}
    \psfrag{tv4}[l][][0.8]{$\tilde{\mb{v}}_4$}
    \psfrag{tv5}[l][][0.8]{$\tilde{\mb{v}}_5$}
    \psfrag{tv6}[l][][0.8]{$\tilde{\mb{v}}_6$}
    \psfrag{tv7}[l][][0.8]{$\tilde{\mb{v}}_r=\mb{y}_r$}
    \psfrag{tvt}[l][][0.8]{$\tilde{\mb{v}}_t=\mb{y}_t$}
    \psfrag{tvk}[l][][0.8]{$\tilde{\mb{v}}_k=\mb{y}_k$}
    \psfrag{tM}[l][][0.8]{}
    \psfrag{tN}[l][][0.8]{}
    \psfrag{CDG}[l][][0.8]{$\tilde{\Gamma}$}
    \psfrag{PVT}[l][][0.8]{$\tilde{\mb{v}}_i$}
    \psfrag{RSD}[l][][0.8]{$\rside\left(\Delta_\Gamma \right)$}
    \psfrag{MVT}[l][][0.8]{$\mb{y}_j \in \rside(\Gamma)$}
    \psfrag{y2}[l][][0.8]{$\mb{y}_2$}
    \psfrag{y3}[l][][0.8]{$\mb{y}_3$}
    \psfrag{y5}[l][][0.8]{$\mb{y}_5$}
    \psfrag{y6}[l][][0.8]{$\mb{y}_6$}
    \psfrag{GammaTil}[c][][0.8]{Set $\tilde{\Gamma}$ and points $\mb{y}_j \in \rside(\Gamma)$.}
    \psfrag{y*}[l][][0.8]{$y^*$}
    \includegraphics[scale=0.37]{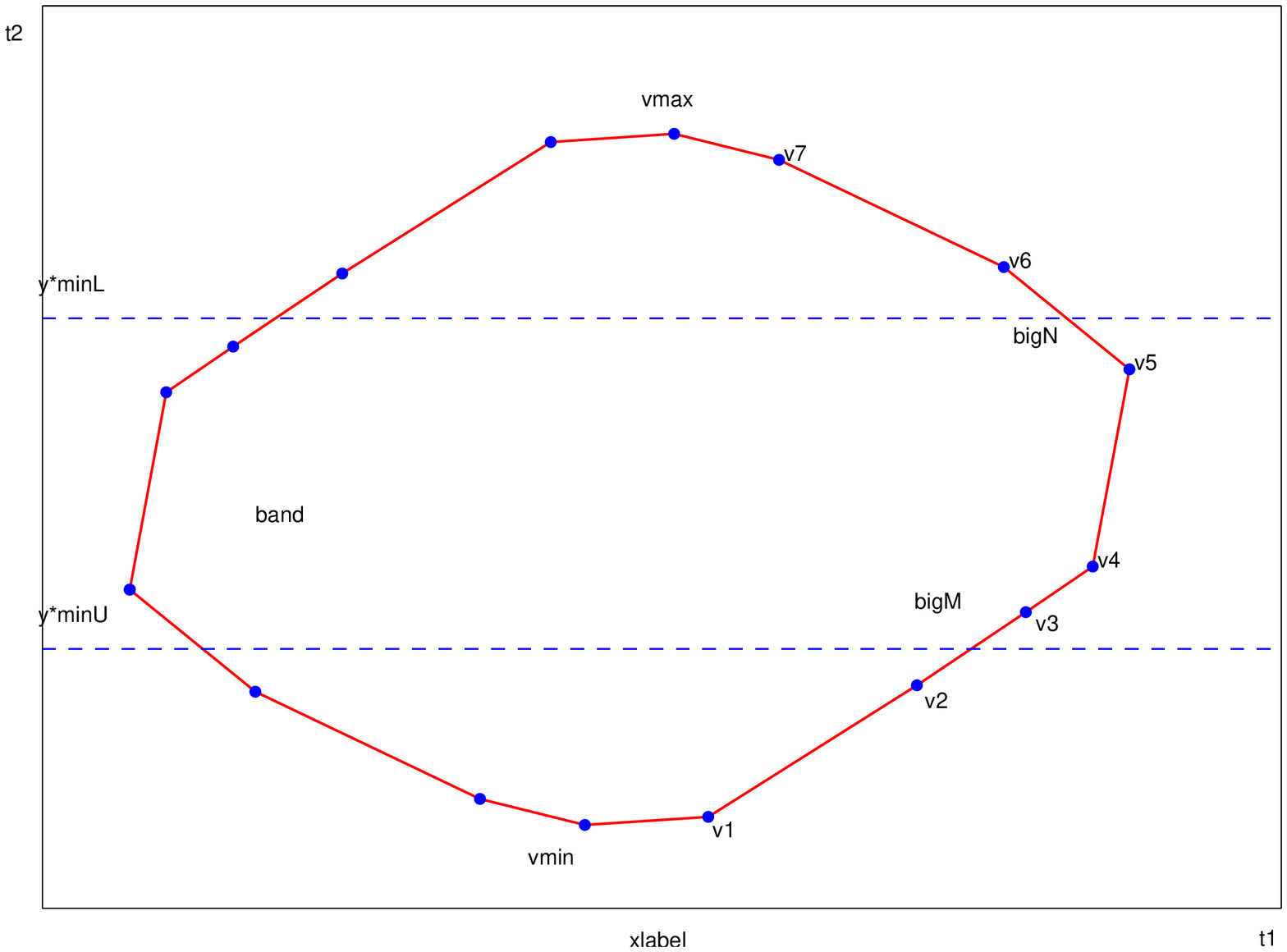}
    \includegraphics[scale=0.37]{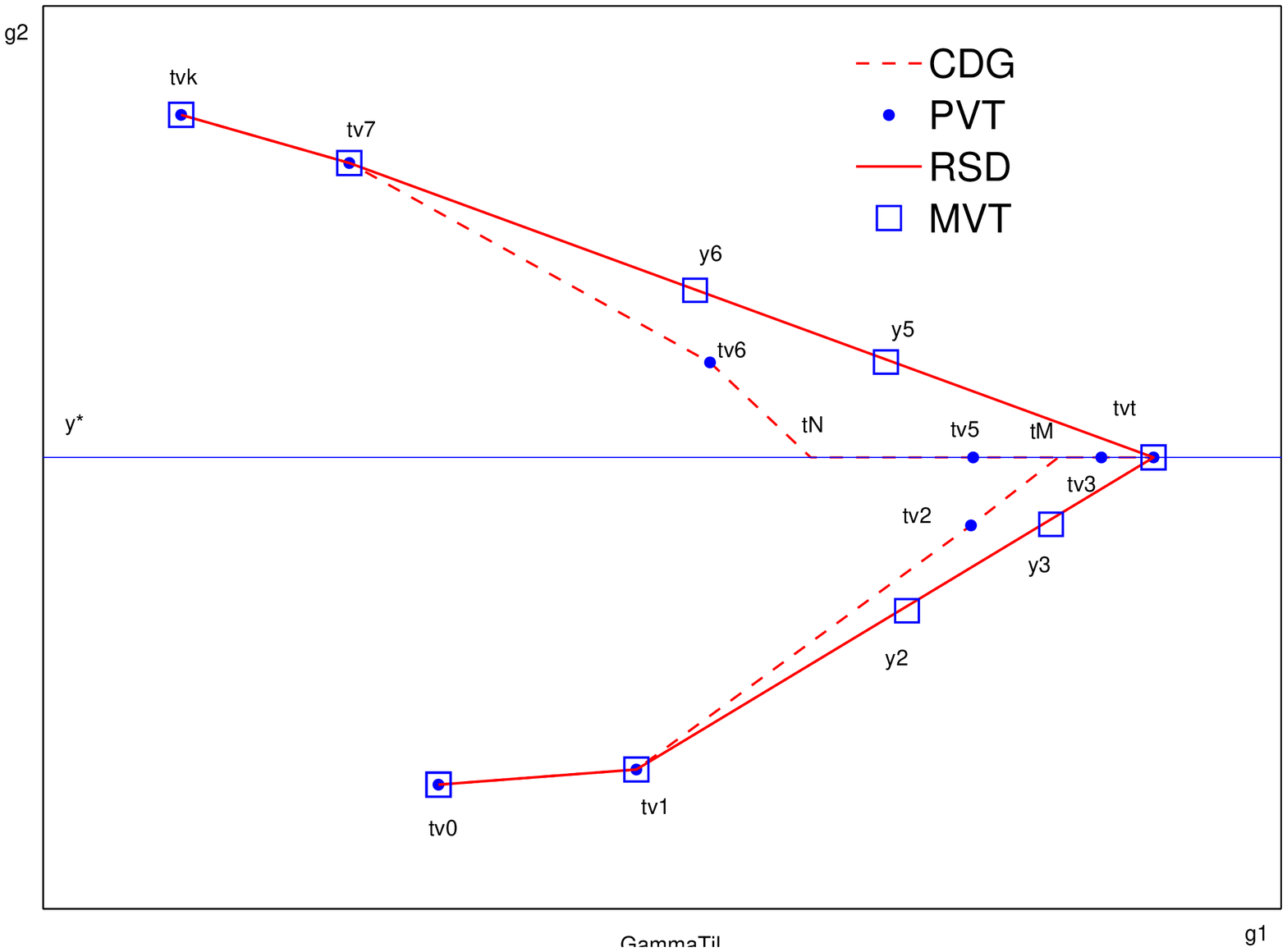}
    \caption{\label{fig:matching_affine_control} Outcomes from the matching and alignment performed in Algorithm 1.}
  \end{psfrags}
\end{figure}

We remark that the above algorithm does not explicitly require 
that the control $q(\mb{w})$ be robustly feasible, i.e. second condition 
in (\ref{eq:aff_problem}). However, we will soon 
prove that this is a direct result of the way matching and alignment 
are performed in Algorithm 1. 

\subsubsection{Affine Controller Preserves Overall Objective and Is Robust.}
\label{sec:affine_controller_preserves_objective}
In this section, we prove that the affine control law $q(\mb{w})$ 
produced by Algorithm 1 
satisfies the requirements of (\ref{eq:aff_problem}), i.e. it is robustly feasible, 
and it preserves the overall objective function $J_{mM}$ 
when used in conjunction with the original convex state costs, $h(\cdot)$. 
With the exception of Corollary \ref{corol:max_on_zonogon_hull}, all 
the key results that we will be using are contained in Section 
\ref{sec:further_underst_induc_hypo} (Lemmas \ref{lem:maximum_in_OPT} and  
\ref{lem:rside_delta_gamma}). Therefore, we will preserve the 
same notation and case discussion as initially introduced there.

First consider the condition on line 1 of Algorithm 1, and 
note that this corresponds to the three 
trivial cases \textbf{[C1]}, \textbf{[C2]} and \textbf{[C3]} of Section 
\ref{sec:further_underst_induc_hypo}. In particular, since $\theta_2 \equiv x_{k+1}$, 
we can use (\ref{eq:DP:uk_star}) to conclude that in these cases, the optimal control law $u^*(\cdot)$ is 
actually affine:
\begin{itemize}
\item[\textbf{[C1]}]  If $\Theta$ falls below the band $\B$, then the upper bound 
  constraint on the control at time $k$ is always active, i.e. 
  the optimal control is $u^*(\theta_2(\mb{w}))= U, ~\forall\, 
  \mb{w} \in \H_k$. 
\item[\textbf{[C2]}] If $\Theta \subseteq \B$, then the constraints on the 
  control at time $k$ are never active, i.e. 
  $u^*(\theta_2(\mb{w}))=y^*-\theta_2(\mb{w})$, hence affine in $\mb{w}$, 
  since $\theta_2$ is affine in $\mb{w}$, by (\ref{eq:simple_notation_theta12}).
\item[\textbf{[C3]}]  If $\Theta$ falls above the band $\B$, then the lower bound 
  constraint on the control is always active, i.e. 
  $u^*(\theta_2(\mb{w}))= L, ~\forall\, \mb{w} \in \H_k$. 
\end{itemize}
Therefore, with the assignment in line 2 of Algorithm 1, 
we obtain an affine control law that is always feasible and also optimal.

When none of the trivial cases holds, we are in case \textbf{[C4]} of 
Section \ref{sec:further_underst_induc_hypo}. Therefore, we can invoke the 
results from Lemma \ref{lem:rside_delta_gamma} to argue that the 
right side of the set $\Delta_\Gamma$ is exactly 
the set on line 7 of the algorithm, $\rside(\Delta_\Gamma) 
= \{\tilde{\mb{v}}_{0},\dots,\tilde{\mb{v}}_{s}\} \cup 
\{\tilde{\mb{v}}_{t}\} \cup \{\tilde{\mb{v}}_{r},\dots,\tilde{\mb{v}}_{k}\}$.
In this setting, we can now formulate the first claim about system 
(\ref{eq:system_for_q_coefficients}) and its solution:

\begin{lemma}
  \label{lem:affine_control_coefficients_robust}
  System (\ref{eq:system_for_q_coefficients}) is always feasible, and the 
  solution satisfies:
  \begin{enumerate}
  \item $-b_i \leq q_i \leq 0, ~\forall \, i \in \{1,\dots,k\}$.
  \item $ L \leq q(\mb{w}) \leq U, ~\forall \, \mb{w} \in \H_k$.
  \end{enumerate}
\end{lemma}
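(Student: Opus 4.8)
The plan is to prove both claims constructively, by exhibiting a solution of system $(S)$ and reading the two properties off of it. Since the test on line~1 of Algorithm~1 has failed we are in case \textbf{[C4]}, so Lemma~\ref{lem:rside_delta_gamma}(i) gives $\rside(\Delta_\Gamma) = \{\tilde{\mb{v}}_0,\dots,\tilde{\mb{v}}_{s}\} \cup \{\tilde{\mb{v}}_t\} \cup \{\tilde{\mb{v}}_{r},\dots,\tilde{\mb{v}}_{k}\}$ with $s \leq r$; after an infinitesimal perturbation of the generators (cf.\ the remark following Assumption~\ref{as:zonogon_max_vertices}) we may also assume $b_i > 0$ for all $i$. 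First I would dispose of the \emph{matched} indices $\{0,\dots,s\} \cup \{r+1,\dots,k\}$: for these the matching equations in $(S)$ read $q_0 = u^*(\mb{v}_0)$ and $q_i = u^*(\mb{v}_i) - u^*(\mb{v}_{i-1})$. Since $\theta_2[\mb{v}_i] - \theta_2[\mb{v}_{i-1}] = b_i \geq 0$ while $u^*(\cdot)$ is non-increasing and $1$-Lipschitz (properties \textbf{P1} and \textbf{P3} of Section~\ref{sec:DP_formulation}), each such $q_i$ obeys $-b_i \leq q_i \leq 0$, which is claim~1 on these indices; moreover $q_0 = u^*(\mb{v}_0)$ and, once all coefficients are known, $q_0 + \dots + q_k = u^*(\mb{v}_k)$.

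Next I would solve for the \emph{aligned} indices. Inspecting the positions of $t$ relative to $s$ and $r$, the two ranges $\{s+1,\dots,\min(t,r)\}$ and $\{\max(t,s)+1,\dots,r\}$ in $(S)$ partition $\{s+1,\dots,r\}$, both being nonempty only when $t$ lies strictly between $s$ and $r$, and then the first sits entirely at or below index $t$ and the second strictly above it. For a fixed scalar $K \neq c$ the alignment equation $\frac{a_i+c\,q_i}{b_i+q_i} = K$ has the unique solution $q_i = \frac{a_i - K\,b_i}{K-c}$, equivalently $b_i + q_i = \frac{a_i - c\,b_i}{K-c}$. Summing over one of these ranges, using $\sum a_i = \theta_1[\mb{v}_\beta] - \theta_1[\mb{v}_\alpha]$ and $\sum b_i = \theta_2[\mb{v}_\beta] - \theta_2[\mb{v}_\alpha]$ for $i\in\{\alpha+1,\dots,\beta\}$, and imposing the matching equation at $\tilde{\mb{v}}_\beta$ (which lies in $\rside(\Delta_\Gamma)$ both for $\beta=\min(t,r)$ and for $\beta=r$), a short computation forces
\[
  K \;=\; \frac{\bigl(\theta_1[\mb{v}_\beta]-\theta_1[\mb{v}_\alpha]\bigr) + c\,\bigl(u^*(\mb{v}_\beta)-u^*(\mb{v}_\alpha)\bigr)}{\bigl(\theta_2[\mb{v}_\beta]-\theta_2[\mb{v}_\alpha]\bigr) + \bigl(u^*(\mb{v}_\beta)-u^*(\mb{v}_\alpha)\bigr)} \;=\; \cotan{\tilde{\mb{v}}_\alpha}{\tilde{\mb{v}}_\beta}.
\]
Taking $(\alpha,\beta)=(s,\min(t,r))$ yields $K_U = \cotan{\tilde{\mb{v}}_{s}}{\tilde{\mb{v}}_{\min(t,r)}}$ and $(\alpha,\beta)=(\max(t,s),r)$ yields $K_L = \cotan{\tilde{\mb{v}}_{\max(t,s)}}{\tilde{\mb{v}}_{r}}$, both well defined by Lemma~\ref{lem:rside_delta_gamma}(ii) (in the degenerate configuration where two of the $\tilde{\mb{v}}_i$ share a $\tilde{\gamma}_2$-coordinate the corresponding edge of $\Gamma$ is forced horizontal, i.e.\ $q_i=-b_i$, which one checks by hand, or which one removes by the perturbation above). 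This shows $(S)$ is feasible.

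It remains to bound the aligned $q_i$. From $b_i + q_i = \frac{a_i - c\,b_i}{K-c}$, claim~1 for an aligned index is equivalent to requiring that $\frac{a_i}{b_i}$ lie between $c$ and the relevant value of $K$. For the below-$t$ range this is immediate: $c < \frac{a_i}{b_i} \leq \frac{a_{s+1}}{b_{s+1}} \leq K_U$, the first two inequalities following from the definition~\eqref{eq:t_index_definition} of $t$ together with~\eqref{eq:conditions_generators_theta}, and the third from Lemma~\ref{lem:rside_delta_gamma}(ii). For the above-$t$ range, symmetrically, $K_L \leq \frac{a_{r}}{b_{r}} \leq \frac{a_i}{b_i} \leq c$. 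In both cases $\frac{a_i}{b_i}$ is sandwiched between $c$ and $K$, so $\frac{a_i - c\,b_i}{K-c} \in [0, b_i]$, i.e.\ $-b_i \leq q_i \leq 0$; combined with the matched indices this proves claim~1. Claim~2 then follows at once: $q(\mb{w}) = q_0 + \sum_{i=1}^k q_i w_i$ is affine on $[0,1]^k$ with all coefficients $q_i \leq 0$, hence it attains its maximum at $\mb{w}=\mb{0}$ and its minimum at $\mb{w}=\mb{1}$, so that $u^*(\mb{v}_k) = q_0 + \dots + q_k \leq q(\mb{w}) \leq q_0 = u^*(\mb{v}_0)$ for all $\mb{w}\in\H_k$, and both endpoints lie in $[L,U]$ because $L \leq u^*(\cdot) \leq U$ pointwise.

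The step I expect to be most delicate is the bookkeeping around the alignment ranges: one must check that for every relative position of $t$ with respect to $s$ and $r$ the $\min/\max$-clamped ranges of $(S)$ genuinely partition $\{s+1,\dots,r\}$ and place the split precisely where the sign of $\frac{a_i}{b_i}-c$ flips, and that the closed forms for $K_U$ and $K_L$ forced by the matching equations are simultaneously consistent with the alignment equations (so $(S)$ is not over-determined), including the boundary configurations in which an alignment edge of $\Gamma$ degenerates to a horizontal segment.
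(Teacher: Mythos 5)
Your proposal is correct and follows essentially the same route as the paper's proof: property \textbf{P3} handles the matched indices, the summed alignment equations identify $K_U$ and $K_L$ as $\cotan{\tilde{\mb{v}}_{s}}{\tilde{\mb{v}}_{\min(t,r)}}$ and $\cotan{\tilde{\mb{v}}_{\max(t,s)}}{\tilde{\mb{v}}_{r}}$, Lemma \ref{lem:rside_delta_gamma}(ii) sandwiches $a_i/b_i$ between $c$ and the relevant $K$ to give $-b_i \leq q_i \leq 0$, and part 2 follows by evaluating at $\mb{w}=\mb{0}$ and $\mb{w}=\mb{1}$. The only cosmetic difference is that you establish feasibility by exhibiting the solution explicitly rather than by the paper's count of equations versus unknowns, which is if anything slightly more careful.
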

\begin{proof}
  Note first that system (\ref{eq:system_for_q_coefficients}) has exactly 
  $k+3$ unknowns, two for the cotangents $K_U,K_L$, and one for 
  each coefficient $q_i, 0 \leq i \leq k$. Also, since $\left| 
    \rside(\Delta_\Gamma) \right| \leq 
  \left| \ext(\Delta_\Gamma) \right| \leq k+1$, and there are exactly 
  $\left| \rside(\Delta_\Gamma) \right|$ 
  matching constraints, and $k+3-\left| \rside(\Delta_\Gamma) \right|$ 
  alignment constraints, it can be immediately 
  seen that the system is always feasible. 
  
  Consider any $q_i$ with $i \in \{1,\dots,s\} \cup \{r+1,\dots,k\}$. 
  From the matching conditions, we have that 
  $q_i = u^*(\mb{v}_i) - u^*(\mb{v}_{i-1})$.
  By property \textbf{P3} from Section \ref{sec:DP_formulation},  the difference 
  in the values of the optimal control law $u^*(\cdot)$ satisfies:
  \begin{align*}
    u^*(\mb{v}_i) - u^*(\mb{v}_{i-1}) &\bydef u^*(\theta_2[\mb{v}_i]) 
    - u^*(\theta_2[\mb{v}_{i-1}]) \\
    (\text{by \textbf{P3}})~& \,= - f \cdot \left( \theta_2[\mb{v}_i] - 
      \theta_2[\mb{v}_{i-1}] \right) \\
    &\overset{\tiny \eqref{eq:simple_notation_theta12}}{=} - f \cdot b_i, 
    \qquad \text{where}~ f \in [0,1].
  \end{align*}
  Since, by (\ref{eq:conditions_generators_theta}), $b_j \geq 0, \, \forall \, j \in \{1,\dots,k\}$, 
  we immediately obtain $-b_i \leq q_i \leq 0$, for $i \in \{1,\dots,s\} \cup 
  \{r+1,\dots,k\}$.
  
  Now consider any index $i \in \{s+1,\dots, t \wedge r \}$, where 
  $t \wedge r \equiv \min(t,r)$. From 
  the conditions in system (\ref{eq:system_for_q_coefficients}) for alignment below $t$, 
  we have $q_i = \frac{a_i- K_U \cdot b_i}{K_U-c}$.
  By summing up all such relations, we obtain:
  \begin{align*}
    \sum_{i=s+1}^{t \wedge r} q_i &= \frac{\sum_{i=s+1}^{t \wedge r} a_i- K_U \cdot 
      \sum_{i=s+1}^{t \wedge r} b_i}{K_U-c} \quad \Leftrightarrow 
    \qquad (\text{using the matching}) \\
    u^*(\mb{v}_{t \wedge r}) - u^*(\mb{v}_{s})&=
    \frac{\sum_{i=s+1}^{t \wedge r} a_i- K_U \cdot 
      \sum_{i=s+1}^{t \wedge r} b_i}{K_U-c} \quad \Leftrightarrow \\
    K_U &= \frac{\sum_{i=s+1}^{t \wedge r} a_i + c \cdot 
      \left( u^*(\mb{v}_{t \wedge r}) - u^*(\mb{v}_{s}) \right)}
    {\sum_{i=s+1}^{t \wedge r} b_i + u^*(\mb{v}_{t \wedge r}) - u^*(\mb{v}_{s})} \\
    &= \frac{\left[ \sum_{i=0}^{t \wedge r} a_i + c \cdot 
        u^*(\mb{v}_{t \wedge r}) \right] - 
      \left[ \sum_{i=0}^{s} a_i + c \cdot u^*(\mb{v}_{s}) \right]}
    {\left[ \sum_{i=0}^{t \wedge r} b_i + u^*(\mb{v}_{t \wedge r}) \right] - 
      \left[ \sum_{i=0}^{s} b_i + u^*(\mb{v}_{s}) \right]} \\
    &\overset{\tiny \eqref{eq:vi_tilde_definition}}{=} 
    \frac{\tilde{\gamma}_1[\tilde{\mb{v}}_{t \wedge r}] - 
      \tilde{\gamma}_1[\tilde{\mb{v}}_{s}]}{\tilde{\gamma}_2[\tilde{\mb{v}}_{t \wedge r}] - 
      \tilde{\gamma}_2[\tilde{\mb{v}}_{s}]} 
    \overset{\tiny \eqref{eq:cotan_definition}}{=} 
    \cotan{\tilde{\mb{v}}_{s}}{\tilde{\mb{v}}_{t \wedge r}}.
  \end{align*}
  In the first step, we have used the fact that both $\tilde{\mb{v}}_{s}$
  and $\tilde{\mb{v}}_{\min(t,r)}$ are matched, hence the intermediate coefficients $q_i$ 
  must sum to exactly the difference of the values of $u^*(\cdot)$ 
  at $\mb{v}_{\min(t,r)}$ and $\mb{v}_{s}$ respectively. In this context, we can 
  see that $K_U$ is simply the cotangent of the angle formed 
  by the segment $[\tilde{\mb{v}}_{s},\tilde{\mb{v}}_{\min(t,r)}]$ with the 
  horizontal (i.e. $\tilde{\gamma}_1$) axis. In this case, we can immediately
  recall result (\ref{eq:properties_KL_KU}) from Lemma \ref{lem:rside_delta_gamma},
  to argue that $K_U \geq \frac{a_{s+1}}{b_{s+1}}$. Combining with (\ref{eq:t_index_definition})
  and (\ref{eq:conditions_generators_theta}), we obtain:
  \begin{align*}
    K_U \geq \frac{a_{s+1}}{b_{s+1}} 
    \overset{\tiny \eqref{eq:conditions_generators_theta}}{\geq \dots \geq} 
    \frac{a_{\min(t,r)}}{b_{\min(t,r)}} \geq \frac{a_t}{b_t} 
    \overset{\tiny \eqref{eq:t_index_definition}}{>} c.
  \end{align*}
  Therefore, we immediately have that for any $i \in \{s+1,\dots,\min(t,r)\}$,
  \begin{align*}
    \left\{
      \begin{aligned}
        a_i - K_U \cdot b_i &\leq 0 \\
        K_U - c &> 0
      \end{aligned} \right.
    & ~ \Rightarrow ~ q_i = \frac{a_i- K_U \cdot b_i}{K_U-c} \leq 0 ~, & \quad 
    \left\{
      \begin{aligned}
        a_i - c \cdot b_i &> 0 \\
        q_i + b_i &= \frac{a_i - c \cdot b_i}{K_U - c}
      \end{aligned} \right.
    & ~\Rightarrow  ~ q_i + b_i \geq 0.
  \end{align*}
  The argument for indices $i \in \{\max(t,s)+1,\dots,r\}$ proceeds in exactly 
  the same fashion, by recognizing that $K_L$ defined in the algorithm
  is the same as $\cotan{\tilde{\mb{v}}_{\max(t,s)}}{\tilde{\mb{v}}_{r}}$,
  and then applying (\ref{eq:properties_KL_KU}) to argue that 
  $ K_L < \frac{a_{r}}{b_{r}} \leq 
  \frac{a_{\max(t,s)+1}}{b_{\max(t,s)+1}} \leq \frac{a_{t+1}}{b_{t+1}} \leq c$.
  This will allow us to use the same reasoning as above, completing the proof 
  of part $(i)$ of the claim.

  To prove part $(ii)$, consider any $\mb{w} \in \H_k \bydef [0,1]^k$. Using part $(i)$,
  we obtain:
  \begin{align*}
    q(\mb{w}) &\bydef q_0 + \sum_{i=1}^k q_i \cdot w_i ~ \leq ~
    (\text{since $w_i \in [0,1], q_i \leq 0$}) ~
    \leq q_0 \overset{\tiny (**)}{=} u^*(\mb{v}_0) \leq U ~ ,\\
    q(\mb{w}) &\geq q_0 + \sum_{i=1}^k q_i \cdot 1 \overset{\tiny (**)}{=}
    u^*(\mb{v}_k) \geq L.
  \end{align*}
  Note that in step $(**)$, we have critically used the result from Lemma 
  \ref{lem:rside_delta_gamma} that, when $\Theta \nsubseteq \B$,
  the points $\tilde{\mb{v}}_0,\tilde{\mb{v}}_k$ are always among the 
  points on the right side of $\Delta_\Gamma$, and, therefore, we will always 
  have the equations $q_0 = u^*(\mb{v}_0),  q_0 + \sum_{i=1}^k q_i = u^*(\mb{v}_k)$ 
  among the matching equations of system 
  (\ref{eq:system_for_q_coefficients}). For the last arguments, we have simply 
  used the fact that the optimal control law, $u^*(\cdot)$, is always 
  feasible, hence $L \leq u^*(\cdot) \leq U$. 
\end{proof}

This completes our first goal, namely proving that the affine controller $q(\mb{w})$ is 
always robustly feasible. To complete the construction, we introduce the following 
final result:
\begin{lemma}
  \label{lem:affine_control_preserves_objective}
  Using the affine control law $q(\mb{w})$ computed in Algorithm 1 
  satisfies the first equation in (\ref{eq:aff_problem}).
\end{lemma}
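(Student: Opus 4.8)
The plan is to show that the zonogon $\Gamma$ of \eqref{eq:set_for_gamma1_gamma2} — whose generators are $\binom{a_i+c\,q_i}{b_i+q_i}$, $i=1,\dots,k$ — is exactly the zonogon hull (Definition \ref{def:zonogon_hull}) of the point set $\{\tilde{\mb{v}}_0,\dots,\tilde{\mb{v}}_k\}$, and then let Corollary \ref{corol:max_on_zonogon_hull} and Lemma \ref{lem:maximum_in_OPT} do the rest. First, the three trivial branches on line 1 of Algorithm 1 are immediate: there $q(\mb{w})=u^*(\theta_2(\mb{w}))$, so the maximization in \eqref{eq:aff_problem} literally becomes problem $(OPT)$ of \eqref{eq:set_for_gamma1tilde_gamma2tilde}, whose value is $J_{mM}$ by construction. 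So the work is in case \textbf{[C4]}, where by Lemma \ref{lem:rside_delta_gamma}(i) we have $\rside(\Delta_\Gamma)=\{\tilde{\mb{v}}_0,\dots,\tilde{\mb{v}}_s\}\cup\{\tilde{\mb{v}}_t\}\cup\{\tilde{\mb{v}}_r,\dots,\tilde{\mb{v}}_k\}$.

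I would first pin down the base vertices. By Lemma \ref{lem:affine_control_coefficients_robust}(i) every generator of $\Gamma$ satisfies $b_i+q_i\ge 0$, so $\gamma_2(\mb{w})$ is minimized on $\H_k$ at $\mb{w}=\mb{0}$; by the matching equation $q_0=u^*(\mb{v}_0)$ together with \eqref{eq:vi_tilde_definition}, the image of $\mb{w}=\mb{0}$ is exactly $\tilde{\mb{v}}_0$, so $\tilde{\mb{v}}_0=\mb{v}^-(\Gamma)$ (up to the degenerate case where several generators have $b_i+q_i=0$). The same monotonicity — the map $\theta_2\mapsto\theta_2+u^*(\theta_2)$ is non-decreasing, by properties \textbf{P1}, \textbf{P3} — shows that $\tilde{\mb{v}}_0$ is also the base vertex $\mb{y}_0$ of $\zhull(\{\tilde{\mb{v}}_i\})$. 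Hence both zonogons start at $\tilde{\mb{v}}_0$, and it remains to check that the generators of $\Gamma$, after grouping parallel ones, coincide with the edge vectors of $\rside(\Delta_\Gamma)$ (which are the generators of $\zhull(\{\tilde{\mb{v}}_i\})$).

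This is where the two kinds of constraints in \eqref{eq:system_for_q_coefficients} are used. For $i\in\{1,\dots,s\}\cup\{r+1,\dots,k\}$ the matching equations telescope to give generator $i$ equal to $\tilde{\mb{v}}_i-\tilde{\mb{v}}_{i-1}$, i.e.\ precisely the corresponding edge of $\rside(\Delta_\Gamma)$. For $i\in\{s+1,\dots,\min(t,r)\}$ the alignment constraints force a common cotangent $K_U$, so on the right side of $\Gamma$ these generators merge into $\sum_{i=s+1}^{\min(t,r)}\binom{a_i+c\,q_i}{b_i+q_i}$, which — by exactly the telescoping-via-matching identity already established for $K_U$ in the proof of Lemma \ref{lem:affine_control_coefficients_robust} — equals $\tilde{\mb{v}}_{\min(t,r)}-\tilde{\mb{v}}_s$, and by Lemma \ref{lem:rside_delta_gamma}(i) this is the edge of $\rside(\Delta_\Gamma)$ leaving $\tilde{\mb{v}}_s$. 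The symmetric computation with $K_L$ on $\{\max(t,s)+1,\dots,r\}$ yields the edge $\tilde{\mb{v}}_r-\tilde{\mb{v}}_{\max(t,s)}$. A short split on whether $t\le s$, $s<t<r$, or $t\ge r$ (in which case one of the two alignment ranges is empty and its $K$ is vacuous) confirms that these merged vectors account for exactly the remaining edges of $\rside(\Delta_\Gamma)$, with $\tilde{\mb{v}}_t$ surviving as an intermediate right-side vertex precisely when $s<t<r$. Since $b_i+q_i\ge 0$ and \eqref{eq:conditions_generators_theta} keep all these generators pointing "upward", the grouped generators are genuinely the right-side edges. Therefore $\Gamma=\zhull(\{\tilde{\mb{v}}_0,\dots,\tilde{\mb{v}}_k\})$, and in particular $\rside(\Gamma)=\rside(\Delta_\Gamma)$.

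To conclude, apply Corollary \ref{corol:max_on_zonogon_hull} to the convex polygon $\Gamma$ and the convex function $\gamma_1+g(\gamma_2)$ (recall $g$ is convex, \textbf{P2}): $\max_{\Gamma}\left[\gamma_1+g(\gamma_2)\right]=\max_{\rside(\Gamma)}\left[\gamma_1+g(\gamma_2)\right]=\max_{\rside(\Delta_\Gamma)}\left[\gamma_1+g(\gamma_2)\right]$, and by Lemma \ref{lem:maximum_in_OPT} the last quantity is the optimal value of $(OPT)$, namely $J_{mM}$. That is precisely the first equation of \eqref{eq:aff_problem}. The main obstacle is the generator-grouping bookkeeping in the third paragraph — tracking the three positions of the index $t$ and handling degenerate generators with $b_i+q_i=0$, which make "the right side of $\Gamma$" slightly delicate to describe. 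Fortunately the only genuinely new computation, the telescoping of the aligned generators to $\tilde{\mb{v}}_{\min(t,r)}-\tilde{\mb{v}}_s$ and $\tilde{\mb{v}}_r-\tilde{\mb{v}}_{\max(t,s)}$, is essentially a reuse of identities already proved inside Lemma \ref{lem:affine_control_coefficients_robust}, so it is tedious rather than deep.
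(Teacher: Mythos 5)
Your proposal is correct and follows essentially the same route as the paper's proof: establish that the matching constraints send the relevant hypercube vertices to the points $\tilde{\mb{v}}_i \in \rside(\Delta_\Gamma)$, that the alignment constraints collapse the intermediate generators onto the segments joining consecutive matched points, conclude $\Gamma = \zhull(\{\tilde{\mb{v}}_0,\dots,\tilde{\mb{v}}_k\})$, and invoke Corollary \ref{corol:max_on_zonogon_hull} together with Lemma \ref{lem:maximum_in_OPT}. The paper phrases the bookkeeping in terms of the partial-sum points $\mb{y}_i$ and the decreasing cotangents along $\rside(\Delta_\Gamma)$ rather than in terms of grouped edge vectors, but the computation is the same.
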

\begin{proof}
  From (\ref{eq:set_for_gamma1_gamma2}), the affine controller $q(\mb{w})$ 
  induces the generators $\mb{a} + c \cdot \mb{q}$ and $\mb{b} + \mb{q}$
  for the zonogon $\Gamma$. This implies that $\Gamma$ will be the 
  Minkowski sum of the following segments in $\R^2$:
  \begin{align}
    \left[\begin{array}{c}
        a_1+ c \cdot q_1 \\
        b_1+q_1 
      \end{array} \right], \dots,
    \left[\begin{array}{c}
        a_{s}+ c \cdot q_{s}  \\
        b_{s}+ q_{s}
      \end{array} \right], 
    \left[\begin{array}{c}
        K_U\cdot \left(b_{s+1}+ q_{s+1}\right)  \\
        b_{s+1}+ q_{s+1}
      \end{array} \right], \dots,
    \left[\begin{array}{c}
        K_U \cdot \left(b_{\min(t,r)}+ q_{\min(t,r)}\right)  \\
        b_{\min(t,r)}+ q_{\min(t,r)}
      \end{array} \right], \nonumber \\  
    \left[\begin{array}{c}
        K_L \cdot \left(b_{\max(t,s)+1}+ q_{\max(t,s)+1}\right)  \\
        b_{\max(t,s)+1}+ q_{\max(t,s)+1}
      \end{array} \right], \dots,
    \left[\begin{array}{c}
        K_L \cdot \left(b_{r}+ q_{r}\right)  \\
        b_{r}+ q_{r}
      \end{array} \right],
    \left[\begin{array}{c}
        a_{r+1}+ c \cdot q_{r+1} \\
        b_{r+1}+q_{r+1} 
      \end{array} \right], \dots,
    \left[\begin{array}{c}
        a_{k}+ c \cdot q_{k}  \\
        b_{k}+ q_{k}
      \end{array} \right].
    \label{eq:final_zonotope_generators}
  \end{align}
  From Lemma \ref{lem:affine_control_coefficients_robust}, we have that 
  $q_i+b_i \geq 0, \forall \, i \in \{1,\dots,k\}$. Therefore,
  if we consider the points in $\R^2$:
  \begin{align*}
    \mb{y}_i &= \left(~ \sum_{j=0}^i (a_j+ c \cdot q_j),~~
      \sum_{j=0}^i (b_j+ q_j) ~\right), \quad \forall \, i \in \{0,\dots,k\},
  \end{align*}
  we can make the following simple observations:
  \begin{itemize}
  \item For any vertex $\mb{v}_i \in \Theta, \, i \in \{0,\dots,k\}$, 
    that is matched, i.e.
    $\tilde{\mb{v}}_i \in \rside(\Delta_\Gamma)$, if we let $\mb{w}_i$ represent the 
    unique\footnote{This vertex is unique due to our standing Assumption 
      \ref{as:zonogon_max_vertices} that the number 
      of vertices in $\Theta$ is $2k$ (also see part (iv) of Lemma \ref{lem:zonotope_properties}
      in the Appendix).} vertex of the hypercube $\H_k$ projecting onto $\mb{v}_i$,
    i.e. $\mb{v}_i = \left(\theta_1(\mb{w}_i), \theta_2(\mb{w}_i) \right)$, then 
    we have:
    \begin{align*}
      \mb{y}_i \overset{\tiny \eqref{eq:set_for_gamma1_gamma2}}{=} 
      \left(\gamma_1(\mb{w}_i),\gamma_2(\mb{w}_i)\right)
      \overset{\tiny \eqref{eq:system_for_q_coefficients}}{=}
      \left(\tilde{\gamma}_1(\mb{v}_i),  \tilde{\gamma}_2(\mb{v}_i)\right)
      \overset{\tiny \eqref{eq:vi_tilde_definition}}{=}
      \tilde{\mb{v}}_i.
    \end{align*}
    The first equality follows from the definition of the mapping 
    that characterizes the zonogon $\Gamma$. The second equality 
    follows from the fact that for any matched vertex $\mb{v}_i$, 
    the coordinates in $\Gamma$ and $\tilde{\Gamma}$ are exactly the 
    same, and the last equality is simply the definition of 
    the point $\tilde{\mb{v}}_i$.
  \item For any vertex $\mb{v}_i \in \Theta, \, i \in \{0,\dots,k\}$,
    that is \emph{not} matched, we have:
    \begin{align*}
      \mb{y}_i &\in [\mb{y}_{s},\mb{y}_{\min(t,r)}],
      \quad \forall\, i \in \{s+1,\dots,\min(t,r)-1\} \\
      \mb{y}_i &\in [\mb{y}_{\max(t,s)},\mb{y}_{r}],
      \quad \forall\, i \in \{\max(t,s)+1,\dots,r-1\}.
    \end{align*}
    This can be seen directly from
    (\ref{eq:final_zonotope_generators}), since the segments in $\R^2$
    given by $[\mb{y}_{s},\mb{y}_{s+1}],
    \dots,[\mb{y}_{\min(t,r)-1},\mb{y}_{\min(t,r)}]$ are always
    aligned (with common cotangent, given by $K_U$), and, similarly,
    the segments $[\mb{y}_{\max(t,s)},\mb{y}_{\max(t,s)+1}]$, $\ldots,
    [\mb{y}_{r-1},\mb{y}_{r}]$ are also aligned (with common cotangent
    $K_L$).
  \end{itemize}

  This exactly corresponds to the situation shown earlier in Figure \ref{fig:matching_affine_control}. 
  By combining the two observations, it can be seen that the points 
  $\left\{\mb{y}_{0},\mb{y}_{1},\dots,\mb{y}_{s},\mb{y}_{\max(t,s)}, 
    \mb{y}_{\min(t,r)},\mb{y}_{r},\dots,\mb{y}_{k}\right\}$ will satisfy the following properties:
  \begin{align*}
    \mb{y}_i &= \tilde{\mb{v}}_i, ~\forall \, \tilde{\mb{v}}_i \in 
    \rside(\Delta_\Gamma) ~, \\
    \cotan{\mb{y}_0}{\mb{y}_1} &\geq \cotan{\mb{y}_1}{\mb{y}_2}
    \geq \dots \geq \cotan{\mb{y}_{s-1}}{\mb{y}_{s}} \geq 
    \cotan{\mb{y}_{s}}{\mb{y}_{\min(t,r)}}  \geq \nonumber \\
    & \geq  \cotan{\mb{y}_{\max(t,s)}}{\mb{y}_{r}} \geq 
    \cotan{\mb{y}_{r}}{\mb{y}_{r+1}} \geq \dots \geq
    \cotan{\mb{y}_{k-1}}{\mb{y}_{k}},
  \end{align*}
  where the second relation follows simply because the points 
  $\tilde{\mb{v}}_i \in \rside(\Delta_\Gamma)$ are extreme points on the right side 
  of a convex hull, and thus satisfy the same string of inequalities.
  This immediately implies that this set of $\mb{y}_i$ (corresponding to 
  $\tilde{\mb{v}}_i \in \rside(\Delta_\Gamma)$)
  exactly represent the right side of the zonogon $\Gamma$, 
  which, in turn, implies that $\Gamma \equiv \zhull\left( 
    \left\{ \tilde{\mb{v}}_0, \tilde{\mb{v}}_1, \dots, \tilde{\mb{v}}_{s},
      \tilde{\mb{v}}_{\max(t,s)}, \tilde{\mb{v}}_{\min(t,r)},
      \tilde{\mb{v}}_{r}, \tilde{\mb{v}}_{r+1}, \dots, \tilde{\mb{v}}_{k} 
    \right\} \right)$.
  But then, by Corollary \ref{corol:max_on_zonogon_hull}, the maximum value 
  of problem $(OPT)$ in (\ref{eq:set_for_gamma1tilde_gamma2tilde}) is equal to 
  the maximum value of problem $(AFF)$ 
  in (\ref{eq:aff_problem}), and, since the former is always $J_{mM}$,
  so is that latter.
\end{proof}

This concludes the construction of the affine control law $q(\mb{w})$. 
We have shown that the policy computed by Algorithm 1 
satisfies all the conditions in (\ref{eq:aff_problem}), i.e. is robustly feasible
(by Lemma \ref{lem:affine_control_coefficients_robust}) and, 
when used in conjunction with the original convex state costs, 
preserves the overall optimal min-max value $J_{mM}$ 
(Lemma \ref{lem:affine_control_preserves_objective}).

\subsection{Construction of the Affine State Cost.}
\label{sec:construction_affine_stage_cost}
Note that we have essentially completed the first part of the induction step. 
For the second part, we would still need to show how an affine 
stage cost can be computed, such that constraints \eqref{eq:constr:affine_run_cost}
and \eqref{eq:constr:same_objective} are satisfied. We will return temporarily 
to the notation containing time indices, so as to put the current state of the 
proof into perspective.

In solving problem $(AFF)$ of (\ref{eq:aff_problem}), we have shown that there exists 
an affine $q_{k+1}(\mb{w}^{k+1})$ such that:
\begin{align*}
  J_{mM} &~= \underset{\mb{w}^{k+1} \in \H_k}{\max} 
  \left[ \theta_1(\mb{w}^{k+1}) + c_{k+1} \cdot q_{k+1}(\mb{w}^{k+1}) + 
    g_{k+1}\left(\theta_2(\mb{w}^{k+1})+
      q_{k+1}(\mb{w}^{k+1})\right) \right] \\
  & \overset{\tiny \eqref{eq:set_for_gamma1_gamma2}}{=} 
  \underset{\mb{w}^{k+1} \in \H_k}{\max} \left[ \gamma_1(\mb{w}^{k+1})
  + g_{k+1}\left(\gamma_2(\mb{w}^{k+1})\right) \right].
\end{align*}
Using the definition of $g_{k+1}(\cdot)$ from \eqref{eq:DP:gk_definition}, 
we can write the above (only retaining the second term) as:
\begin{align*}
  J_{mM} &\,= \underset{\mb{w}^{k+1} \in \H_k}{\max} 
  \left[\, \gamma_1(\mb{w}^{k+1}) + 
    \max_{w_{k+1} \in \W_{k+1}} \left[\, h_{k+2}(\gamma_2(\mb{w}^{k+1}) + w_{k+2})
      + J^*_{k+2}(\gamma_2(\mb{w}^{k+1}) + w_{k+2}) \right] ~\right] \\
  &\bydef \underset{\mb{w}^{k+2} \in \H_{k+1}}{\max} 
  \left[~ \pi_1(\mb{w}^{k+2}) + 
    h_{k+2}\left(\pi_2(\mb{w}^{k+2})\right)
    + J^*_{k+2}\left(\pi_2(\mb{w}^{k+2})\right) ~\right],
\end{align*}
where $\pi_1(\mb{w}^{k+2}) \bydef \gamma_1(\mb{w}^{k+1})$,
and $\pi_2(\mb{w}^{k+2}) \bydef \gamma_2(\mb{w}^{k+1}) + w_{k+2}$.
Is is easy to note that:
\begin{align}
  \Pi \bydef \left( \pi_1(\mb{w}^{k+2}),
    \pi_2(\mb{w}^{k+2}) \right)    
  \label{eq:definition_zonotope_affine_cost}
\end{align}
represents yet another zonogon, obtained by projecting a hyper-rectangle 
$\H_{k+1} \subset \R^{k+1}$ into $\R^2$. It has a particular shape 
relative to the zonogon 
$\Gamma = (\gamma_1,\gamma_2)$, since the generators of $\Pi$ are 
simply obtained by appending a $0$ and a $1$, respectively, to the generators 
of $\Gamma$, which implies that $\Pi$ is the convex 
hull of two translated copies of $\Gamma$, where the translation is occuring 
on the $\pi_2$ axis. As it turns out, this fact will bear little importance 
for the discussion to follow, so we include it here only for completeness.

In this context, the problem we would like to solve is to replace the 
convex function 
$h_{k+2}(\pi_2(\mb{w}^{k+2}))$ with an affine function 
$z_{k+2}(\mb{w}^{k+2})$, such that the analogues of conditions 
\eqref{eq:constr:affine_run_cost} and \eqref{eq:constr:same_objective} are obeyed:
\begin{align*}
  z_{k+2}(\mb{w}^{k+2}) &\geq h_{k+2}(\pi_2(\mb{w}^{k+2})),
  \qquad \forall \, \mb{w}^{k+2} \in \H_{k+1} \\
  J_{mM} &= \underset{\mb{w}^{k+2} \in \H_{k+1}}{\max} 
  \left[~ \pi_1(\mb{w}^{k+2}) + 
    z_{k+2}(\mb{w}^{k+2}) + J^*_{k+2}(\pi_2(\mb{w}^{k+2})) ~\right].
\end{align*}
We can now switch back to the simplified notation, where all 
the time subscripts or superscripts are removed. Furthermore, to preserve 
as much of the familiar notation from Section \ref{sec:simplified-notation}, 
we will denote the generators of zonogon $\pi$ by 
$\mb{a},\mb{b}$, so that we have:
\begin{align}
  \pi_1(\mb{w}) &= a_0 + \sum_{i=1}^{k+1} a_i \cdot w_i \quad ,&
  \pi_2(\mb{w}) &= b_0 + \sum_{i=1}^{k+1} b_i \cdot w_i
  \label{eq:pi1_pi2_definition}
\end{align}
In perfect analogy to our discussion in Section \ref{sec:induct_hypo}, we can introduce:
\begin{align}
  & \mb{v}^- \bydef \underset{\pi_1}{\argmax} \left\{ 
    \underset{\pi_2}{\argmin} \left\{
      \mb{\pi} \in \Pi \right\} \right\}; \qquad 
  \mb{v}^+ \bydef 2{\mb{O}} - \mb{v}^- 
  \qquad \left( \mb{O} ~\text{is the center of}~ \Pi \right)
  \label{eq:vertex_Pi_min_max} \\
  & \mb{v}_0 \bydef \mb{v}^{-} , \dots, \mb{v}_{p_1} 
  \bydef \mb{v}^+, \dots , \mb{v}_{2p_1} = \mb{v}^{-} 
  \qquad \left(\text{counter-clockwise numbering of the vertices of $\Pi$}\right).
  \nonumber
\end{align}
Without loss of generality, we will, again, work under Assumptions \ref{as:hypercube},
 \ref{as:zonogon_max_vertices}, 
and \ref{as:consecutive_vertices_zeros}, i.e. we will analyze the case when 
$\H_{k+1} = [0,1]^{k+1}$, $p_1=k+1$ (the zonogon $\Pi$ has a maximal number of vertices), 
and $\mb{v}_i=[1,1,\dots,1,0,\dots,0]$ (ones in the first $i$ positions). 
Furthermore, we will again use $\pi_{1,2}(\mb{w})$ for $\mb{w} \in \H_{k+1}$ to denote the mapping 
from $\H_{k+1} \mapsto \Pi \subset \R^2$, and $\pi_{1,2}[\mb{v}]$ 
to denote the coordinates of the point $\mb{v} \in \R^2$, and will use 
the shorthand notations $h(\mb{v}_i), J^*(\mb{v}_i)$ instead of 
$h(\pi_2[\mb{v}_i])$ and $J^*(\pi_2[\mb{v}_i])$, respectively.

With the simplified notation, the goal is to find $z(\mb{w})$ such that:
\begin{align}
  z(\mb{w}) &\geq h(\pi_2(\mb{w})),
  \qquad \forall \, \mb{w} \in \H_{k+1} \label{eq:simple_notation_robust_h_cost} \\
  \max_{(\pi_1,\pi_2) \in \Pi} \left[\, \pi_1 + 
    h(\pi_2) + J^*(\pi_2) \,\right]& = 
  \max_{\mb{w} \in \H_{k+1}} \left[\, \pi_1(\mb{w}) + 
    z(\mb{w}) + J^*\left(\pi_2(\mb{w})\right) \,\right]
  \label{eq:simple_notation_max_problem_h_cost}
\end{align}
In (\ref{eq:simple_notation_max_problem_h_cost}), the first maximization problem 
corresponds to the problem solved by the uncertainties, $\mb{w}$, when the original 
convex state cost, $h(\pi_2)$, is incurred. As such, the result of the 
maximization is always exactly equal to $J_{mM}$, the overall min-max value. The 
second maximization corresponds to the problem solved by the uncertainties 
when the affine cost, $z(\mb{w})$, is incurred instead of the convex cost.
Requiring that the two optimal values be equal thus amounts to preserving the overall min-max value,
$J_{mM}$, under the affine cost.

Since $h$ and $J^*$ are convex (see property 
\textbf{P2} in Section \ref{sec:DP_formulation}), we can immediately
use Lemma \ref{lem:theta_restricted_region} to conclude that 
the optimal value in the first maximization problem in (\ref{eq:simple_notation_max_problem_h_cost}) 
is reached at one of the vertices $\mb{v}_0,\dots,\mb{v}_{k+1}$ found in $\rside(\Pi)$.
Therefore, by introducing the points:
\begin{align}
  \tilde{\mb{v}}_i \bydef \left( \,
    \pi_1[\mb{v}_i] + h(\mb{v}_i), ~ \pi_2[\mb{v}_i] \, \right), \, 
  \forall\, i \in \{0,\dots,k+1\},
  \label{eq:vtilde_definition_for_affine_cost}
\end{align}
we can immediately conclude the following result:
\begin{lemma}
  \label{lem:maximum_in_OPT_affine_cost}
  The maximum in problem:
  \begin{equation}
    \begin{aligned}
      &(OPT) \quad  \underset{\left(\tpi_1,\tpi_2\right)}{\max} 
      \left[~ \tpi_1 + J^*\left(\tpi_2\right)~\right] ~, \\
      \text{where} \quad & \tpi_1 \bydef \pi_1 + h(\pi_2), \quad
      \tpi_2 \bydef \pi_2, \quad
      (\pi_1,\pi_2) \in \Pi,
    \end{aligned}
    \label{eq:OPT_problem_for_affine_cost}
  \end{equation}
  is reached on the right side of:
  \begin{align}
    \Delta_\Pi \bydef \conv \left( \left\{\tilde{\mb{v}}_0,
      \dots,\tilde{\mb{v}}_{k+1}\right\} \right).
    \label{eq:delta_pi_convex_hull_defn_affine_cost}
  \end{align}
\end{lemma}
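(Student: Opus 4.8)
The plan is to reuse, essentially verbatim, the argument behind Lemma \ref{lem:maximum_in_OPT}, the key observation being that the change of variables $(\pi_1,\pi_2)\mapsto(\tpi_1,\tpi_2)=\bigl(\pi_1+h(\pi_2),\,\pi_2\bigr)$ that defines $(OPT)$ in \eqref{eq:OPT_problem_for_affine_cost} is objective-preserving: at any point of $\Pi$ one has $\tpi_1+J^*(\tpi_2)=\pi_1+h(\pi_2)+J^*(\pi_2)$. Hence the optimal value of $(OPT)$ equals $\max_{(\pi_1,\pi_2)\in\Pi}\bigl[\pi_1+(h+J^*)(\pi_2)\bigr]$, which is precisely the first maximization in \eqref{eq:simple_notation_max_problem_h_cost}.

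First I would record that $h+J^*$ is convex (both summands are convex, by the standing hypotheses on the state costs and by Property \textbf{P2}), so the reduction already noted in the paragraph preceding the lemma applies: by Lemma \ref{lem:theta_restricted_region}, the maximum of $\pi_1+(h+J^*)(\pi_2)$ over the zonogon $\Pi$ is attained at one of the vertices $\mb{v}_0,\dots,\mb{v}_{k+1}$ on $\rside(\Pi)$. Evaluating at $\mb{v}_i$ and using the definition \eqref{eq:vtilde_definition_for_affine_cost} of $\tilde{\mb{v}}_i$ together with the shorthand for $h(\mb{v}_i)$ and $J^*(\mb{v}_i)$, the value attained is $\pi_1[\mb{v}_i]+h(\mb{v}_i)+J^*(\mb{v}_i)=\tpi_1[\tilde{\mb{v}}_i]+J^*(\tpi_2[\tilde{\mb{v}}_i])$. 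Therefore the optimal value of $(OPT)$ is exactly $\max_{i\in\{0,\dots,k+1\}}\bigl[\tpi_1[\tilde{\mb{v}}_i]+J^*(\tpi_2[\tilde{\mb{v}}_i])\bigr]$, i.e., the maximum of $\theta_1+J^*(\theta_2)$ over the finite point set $\mathcal{S}=\{\tilde{\mb{v}}_0,\dots,\tilde{\mb{v}}_{k+1}\}$.

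To finish, I would pass from this finite set to the right side of its convex hull by invoking Corollary \ref{corol:max_on_zonogon_hull} with the convex function $f=J^*$ and any polygon having vertex set $\mathcal{S}$: the corollary yields $\max_{\mathcal{S}}[\theta_1+J^*(\theta_2)]=\max_{\rside(\conv(\mathcal{S}))}[\theta_1+J^*(\theta_2)]$, and $\rside(\conv(\mathcal{S}))=\rside(\Delta_\Pi)$ by the definition \eqref{eq:delta_pi_convex_hull_defn_affine_cost} of $\Delta_\Pi$. Chaining this with the previous display shows the maximum in $(OPT)$ is reached on $\rside(\Delta_\Pi)$, which is the claim.

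The main subtlety — and essentially the only place the argument differs from Lemma \ref{lem:maximum_in_OPT}, where the corresponding map was piecewise affine — is that here the map $(\pi_1,\pi_2)\mapsto(\tpi_1,\tpi_2)$ is not piecewise affine (since $h$ is merely convex), so the image of $\Pi$ is not a polygon and the $\tilde{\mb{v}}_i$ need not be its extreme points. This causes no real difficulty: the proof never asserts that the $\tilde{\mb{v}}_i$ are vertices of that image; it only uses that the optimum of the convex objective over $\Pi$ itself sits at an $\rside(\Pi)$-vertex, that the image of such a vertex is one of the $\tilde{\mb{v}}_i$, and that Corollary \ref{corol:max_on_zonogon_hull} applies to an arbitrary finite point set. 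Thus I do not expect any genuine obstacle; the whole proof is bookkeeping around the objective-preserving change of variables.
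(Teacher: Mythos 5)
Your proof is correct and follows essentially the same route as the paper's: reduce $(OPT)$ to maximizing $\pi_1+(h+J^*)(\pi_2)$ over $\Pi$, use Lemma \ref{lem:theta_restricted_region} to restrict to the vertices $\mb{v}_0,\dots,\mb{v}_{k+1}$, identify their images with the $\tilde{\mb{v}}_i$, and invoke Corollary \ref{corol:max_on_zonogon_hull} to pass to $\rside(\Delta_\Pi)$. Your closing remark that the $\tilde{\mb{v}}_i$ need not be extreme points of the image of $\Pi$ is a valid observation that the argument indeed never relies on.
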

\begin{proof}
  The result is analogous to Lemma \ref{lem:maximum_in_OPT}, and the proof 
  is a rehashing of similar ideas. In particular,
  first note that problem $(OPT)$ is a rewriting of 
  the first maximization problem in (\ref{eq:simple_notation_max_problem_h_cost}). 
  Therefore, since the maximum of the latter problem 
  is reached at the vertices $\mb{v}_i, \, i \in \{0,\dots,k+1\},$ of 
  zonogon $\Pi$, by using \eqref{eq:vtilde_definition_for_affine_cost}
  we can conclude that the maximum in problem $(OPT)$ must be reached 
  on the set $\{\tilde{\mb{v}}_0,\dots,\tilde{\mb{v}}_{k+1}\}$.
  Noting that the function maximized in $(OPT)$ is convex, this  
  set of points can be replaced with its convex hull, $\Delta_\Pi$, without affecting the 
  result \citep[see Section 32 of][]{Rock70}. Furthermore, since the function 
  maximized is of the form $\xi_1 + f(\xi_2)$, with $f$ convex, by 
  applying the results in Corollary \ref{corol:max_on_zonogon_hull},
  and replacing the set by the right-side of its convex hull, $\rside(\Delta_\Pi)$,
  the result of the optimization would remain unchanged.
\end{proof}

Continuing the analogy with the construction in Section \ref{sec:constr-affine-contr}, 
we rewrite the second optimization in 
\eqref{eq:simple_notation_max_problem_h_cost} as:
\begin{equation}
  \begin{aligned}
    &(AFF) \quad \underset{\left(\hpi_1,\hpi_2\right) \in \hat{\Pi}}{\max} 
    \left[~ \hpi_1 + J^*\left(\hpi_2\right)~\right] ~,\\
    \text{where} \quad &\hat{\Pi} \bydef \left\{~ (\hpi_1,\hpi_2) ~:~ 
    \hpi_1(\mb{w}) \bydef \pi_1(\mb{w}) + z(\mb{w}), ~~
    \hpi_2(\mb{w}) \bydef \pi_2(\mb{w}), ~ \mb{w} \in \H_{k+1} ~\right\}.
  \end{aligned}
  \label{eq:AFF_problem_for_affine_cost}
\end{equation}
In order to examine the maximum in problem ($AFF$), we remark 
that its feasible set, $\hat{\Pi} \subset \R^2$,
also represents a zonogon, with generators given by 
$\mb{a} + \mb{z}$ and $\mb{b}$, respectively. Therefore, by 
Lemma \ref{lem:theta_restricted_region}, the maximum of problem ($AFF$) 
is reached at one of the vertices on the right side of $\hat{\Pi}$. 

Using the same key idea from the construction of the affine control law,
we now argue that, if the coefficients of the affine cost, $z_i$, were computed 
in such a way that $\hat{\Pi}$ represented the \emph{zonogon hull} of the set 
of points $\left\{\tilde{\mb{v}}_0,\dots,\tilde{\mb{v}}_{k+1}\right\}$, then 
(by Corollary \ref{corol:max_on_zonogon_hull}), the maximum value 
of problem $(AFF)$ would be the same as the maximum value of problem $(OPT)$.

To this end, we introduce the following procedure for computing the affine 
cost $z(\mb{w})$:
\begin{algorithm}[H]
  \caption{Compute affine stage cost $z(\mb{w})$}
  \label{alg2:find_z_cost}
  \begin{algorithmic}[1]
    \REQUIRE $\pi_1(\mb{w}), \pi_2(\mb{w}), h(\cdot), J^*(\cdot)$.
    \STATE Apply the mapping \eqref{eq:vtilde_definition_for_affine_cost} to obtain 
    $\tilde{\mb{v}}_i, \, \forall\, i \in \{0,\dots,k+1\}$.
    \STATE Compute the set $\Delta_\Pi = \conv\left(\{\tilde{\mb{v}}_0,\dots,
      \tilde{\mb{v}}_{k+1}\}\right)$.
    \STATE Let $\rside(\Delta_\Pi) \bydef \{\tilde{\mb{v}}_{s(1)},\dots,\tilde{\mb{v}}_{s(n)}\}$,
    where $s(1) \leq s(2) \leq \dots \leq s(n) \in \{0,\dots,k+1\}$ are the sorted indices of 
    points on the right side of $\Delta_\Pi$.
    \STATE Solve the following system for $z_j, \, (j \in \{0,\dots,k+1\})$, and 
    $K_{s(i)}, \, (i \in \{2,\dots,n\})$:
    \begin{equation}
      \left\{
        \begin{aligned}
          z_0 + z_1 + \dots + z_{s(i)} &= h\left(\mb{v}_{s(i)} \right), 
          && \forall\, \tilde{\mb{v}}_{s(i)} \in \rside(\Delta_\Pi) && \text{(matching)}  \\
          \frac{z_j+a_j}{b_j} &= K_{s(i)}, &&
          \forall\, j \in \{s(i-1)+1,\dots,s(i)\},
          ~ \forall \, i \in \{2,\dots,n\},
          && \text{(alignment)} 
        \end{aligned}
      \right.
      \label{eq:system_for_z_coefficients}
    \end{equation} 
    \STATE Return $z(\mb{w}) = z_0 + \sum_{i=1}^{k+1} z_i \cdot w_i$.
  \end{algorithmic}
\end{algorithm}

To visualize how the algorithm is working, an extended example is included 
in Figure \ref{fig:algorithm_2_iterations}.
\begin{figure}[h]
  \centering
  \begin{psfrags}
    \psfrag{vmin}[l][][0.8]{$\mb{v}_0=\mb{v}^-$}
    \psfrag{v1}[l][][0.8]{$\mb{v}_1$}
    \psfrag{v2}[l][][0.8]{$\mb{v}_2$}
    \psfrag{v3}[l][][0.8]{$\mb{v}_3$}
    \psfrag{v4}[l][][0.8]{$\mb{v}_4$}
    \psfrag{v5}[l][][0.8]{$\mb{v}_5$}
    \psfrag{v6}[l][][0.8]{$\mb{v}_6$}
    \psfrag{v7}[l][][0.8]{$\mb{v}_7$}
    \psfrag{vmax}[l][][0.8]{$\mb{v}_{k+1}=\mb{v}^+$}
    \psfrag{bigM}[l][][0.8]{}
    \psfrag{bigN}[l][][0.8]{}
    \psfrag{t1}[][][0.8]{$\pi_1$}
    \psfrag{t2}[][][0.8]{$\pi_2$}
    \psfrag{xlabel}[c][][0.8]{Original zonotope $\Pi$.}
    
    \psfrag{g1}[][][0.8]{$\tilde{\pi}_1$}
    \psfrag{g2}[][][0.8]{$\tilde{\pi}_2$}
    \psfrag{tv0}[l][][0.8]{$\tilde{\mb{v}}_0=\tilde{\mb{v}}_{s(1)} = \mb{y}_{s(1)}$}
    \psfrag{tv1}[l][][0.8]{$\tilde{\mb{v}}_1$}
    \psfrag{tv2}[l][][0.8]{$\tilde{\mb{v}}_2$}
    \psfrag{tv3}[l][][0.8]{$\tilde{\mb{v}}_3$}
    \psfrag{tv4}[l][][0.8]{$\tilde{\mb{v}}_{s(2)} = \mb{y}_{s(2)}$}
    \psfrag{tv5}[l][][0.8]{$\tilde{\mb{v}}_5$}
    \psfrag{tv6}[l][][0.8]{$\tilde{\mb{v}}_6$}
    \psfrag{tv7}[l][][0.8]{$\tilde{\mb{v}}_{s(3)} = \mb{y}_{s(3)}$}
    \psfrag{tv8}[r][][0.8]{$\tilde{\mb{v}}_{k+1} = \tilde{\mb{v}}_{s(n)}=\mb{y}_{s(n)}$}
    \psfrag{TPI}[l][][0.8]{}
    \psfrag{PVT}[l][][0.8]{$\tilde{\mb{v}}_i$}
    \psfrag{RSD}[l][][0.8]{$\hat{\Pi} = \zhull\left(\{\tilde{\mb{v}}_i\} \right)$}
    \psfrag{MVT}[l][][0.8]{$\mb{y}_j \in \rside(\hat{\Pi})$}
    \psfrag{y1}[l][][0.8]{$\mb{y}_1$}
    \psfrag{y2}[l][][0.8]{$\mb{y}_2$}
    \psfrag{y3}[l][][0.8]{$\mb{y}_3$}
    \psfrag{y5}[l][][0.8]{$\mb{y}_5$}
    \psfrag{y6}[l][][0.8]{$\mb{y}_6$}
    \psfrag{PiTil}[c][][0.8]{Points $\tilde{\mb{v}}_i$ and $\mb{y}_i \in \rside(\hat{\Pi})$.}
    \psfrag{y*}[l][][0.8]{$y^*$}
    \includegraphics[scale=0.37]{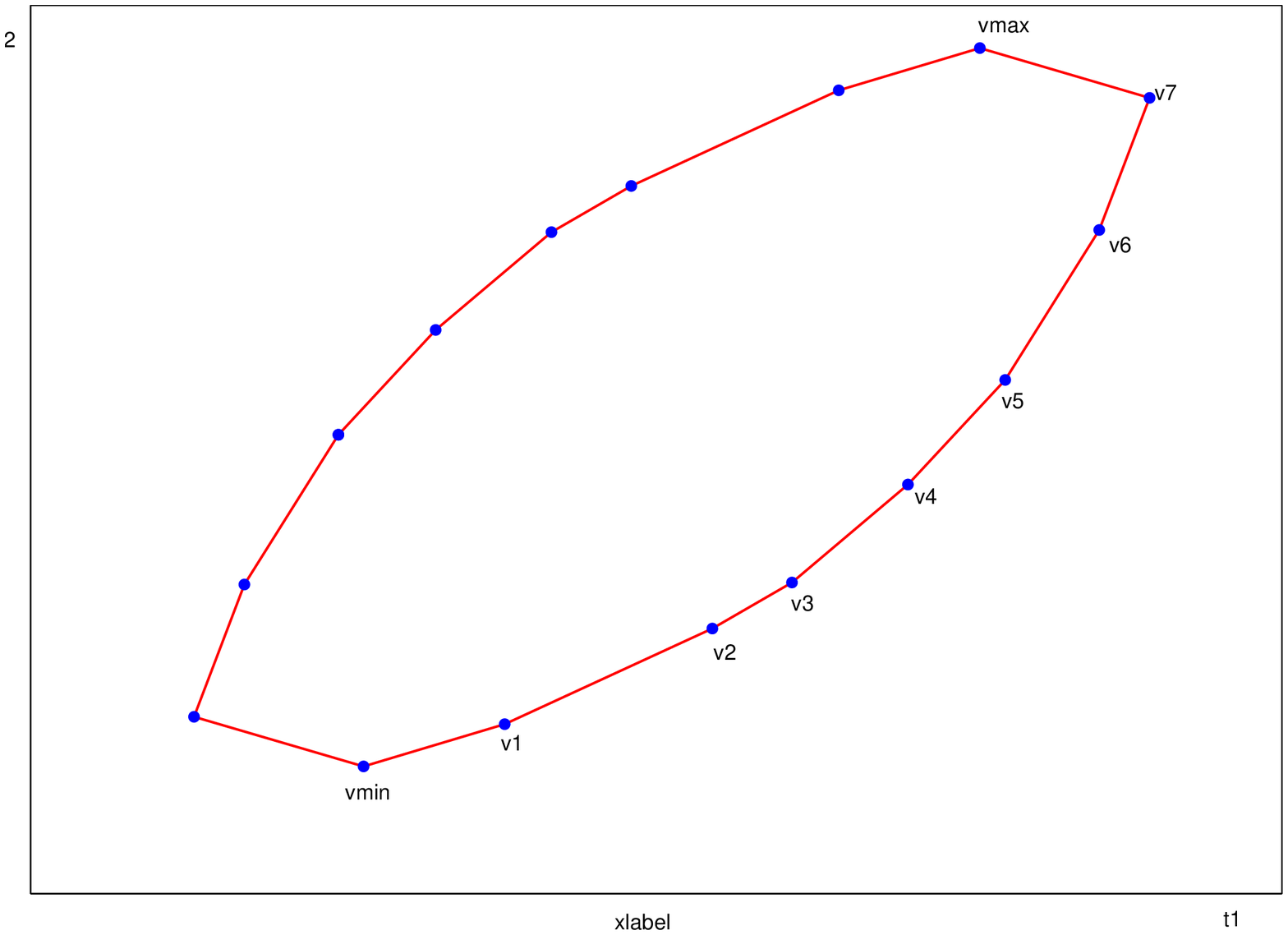}
    \includegraphics[scale=0.37]{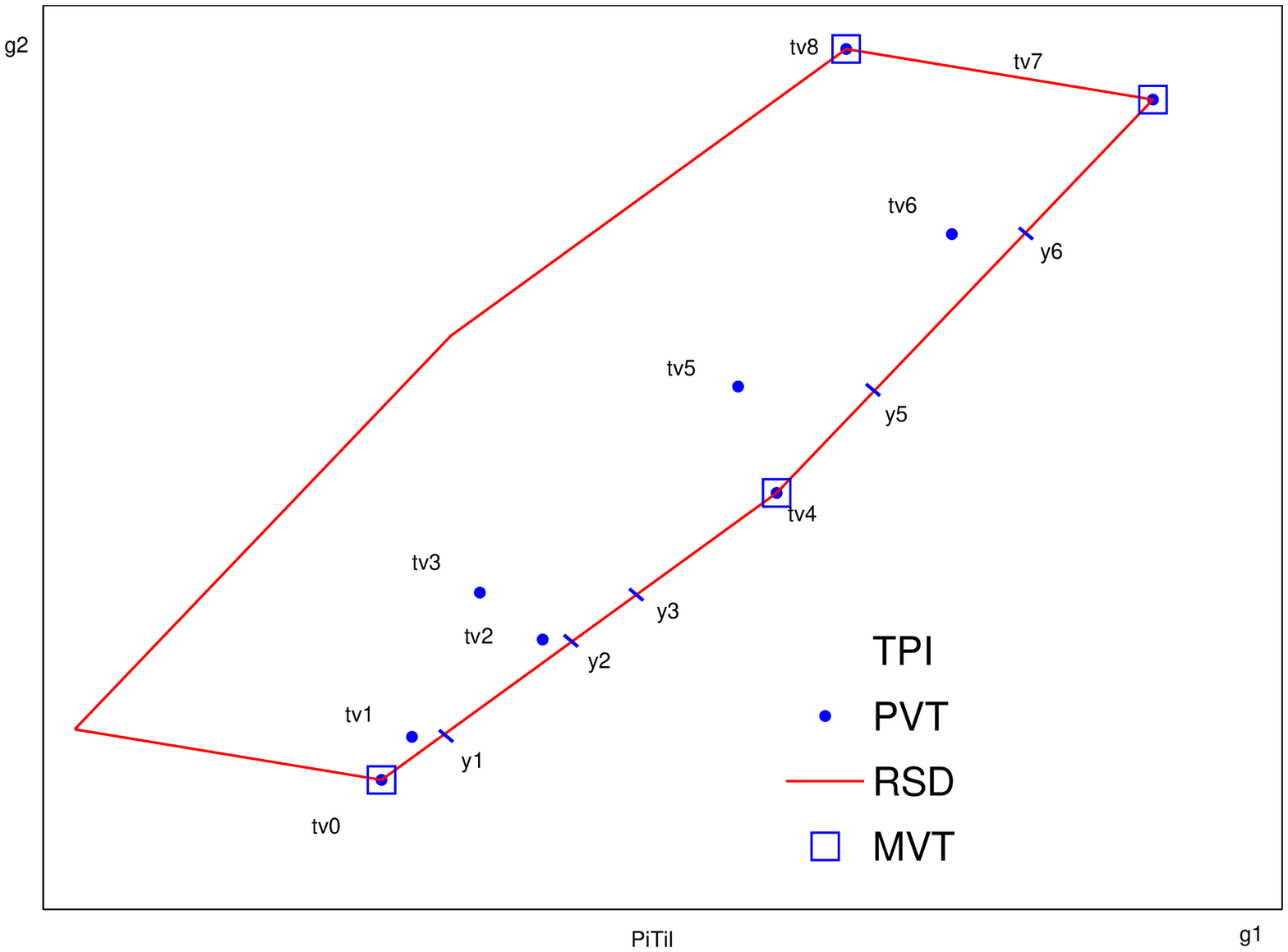}
    \caption{\label{fig:algorithm_2_iterations} Matching and alignment performed in Algorithm 2.}
  \end{psfrags}
\end{figure}
  
The intuition behind the construction is the same as that presented in Section 
\ref{sec:constr-affine-contr}. In particular, the \emph{matching} constraints in 
system \eqref{eq:system_for_z_coefficients} ensure that for any vertex $\mb{w}$ of the 
hypercube $\H_{k+1}$ that corresponds to a potential maximizer in problem $(OPT)$ 
(through $\mb{w} \in \H_{k+1}
\overset{\tiny \eqref{eq:pi1_pi2_definition}}{\mapsto}
\mb{v}_{i} \in \Pi \overset{\tiny \eqref{eq:vtilde_definition_for_affine_cost}}{\mapsto}
\tilde{\mb{v}}_{i} \in \rside(\Delta_\Pi)$), the value of the affine cost 
$z(\mb{w})$ is equal to the value of the initial convex cost, 
$h(\mb{v}_{i}) \equiv h(\pi_2(\mb{w}))$, implying that the value in problem 
$(AFF)$ of \eqref{eq:AFF_problem_for_affine_cost} at 
$\left( \hat{\pi}_1(\mb{w}), \hat{\pi}_2(\mb{w}) \right)$ is equal to the value in problem 
$(OPT)$ of \eqref{eq:OPT_problem_for_affine_cost} at $\tilde{\mb{v}}_{i}$.
The \emph{alignment} constraints in system \eqref{eq:system_for_z_coefficients}
ensure that any such \emph{matched} points, 
$\left( \hat{\pi}_1(\mb{w}), \hat{\pi}_2(\mb{w}) \right)$,
actually correspond to the vertices on the right side of the zonogon $\hat{\Pi}$,
$\rside(\hat{\Pi})$, which implies that, as desired, $\hat{\Pi} \equiv 
\zhull \left( \left\{\tilde{\mb{v}}_0,\dots,\tilde{\mb{v}}_{k+1}\right\} \right)$.

We conclude our preliminary remarks by noting that system (\ref{eq:system_for_z_coefficients})
does not directly impose the robust domination constraint (\ref{eq:simple_notation_robust_h_cost}).
However, as we will soon see, this result is a byproduct of the way the 
matching and alignment are performed in Algorithm 2. 

\subsubsection{Affine Cost \texorpdfstring{$z(\cdot)$}{z()} Dominates Convex Cost \texorpdfstring{$h(\cdot)$}{h()} and Preserves Overall Objective.}
\label{sec:proof_of_corectness_affine_cost_construction}

In this section, we will prove that the affine cost $z(\mb{w})$ computed in 
Algorithm 2 not only robustly dominates the original convex cost (\ref{eq:simple_notation_robust_h_cost}),
but also preserves the overall min-max value (\ref{eq:simple_notation_max_problem_h_cost}).

The following lemma summarizes the first main result:
\begin{lemma}
  \label{lem:affine_cost_preserves_JmM}
  System \eqref{eq:system_for_z_coefficients} is always feasible, and the 
  solution $z(\mb{w})$ always satisfies equation \eqref{eq:simple_notation_max_problem_h_cost}.
\end{lemma}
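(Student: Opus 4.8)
The plan is to mirror, almost verbatim, the argument used for the affine control law in Lemma~\ref{lem:affine_control_preserves_objective}, now with the generator pairs $\bigl(a_j+z_j,\ b_j\bigr)$ of the zonogon $\hat{\Pi}$ playing the role that $\bigl(a_j+c\cdot q_j,\ b_j+q_j\bigr)$ played there, and with $\Delta_\Pi$, $\rside(\Delta_\Pi)$ replacing $\Delta_\Gamma$, $\rside(\Delta_\Gamma)$. Throughout I work under the standing Assumptions~\ref{as:hypercube}--\ref{as:consecutive_vertices_zeros}, so that $\pi_1[\mb{v}_i]=\sum_{j=0}^{i}a_j$, $\pi_2[\mb{v}_i]=\sum_{j=0}^{i}b_j$, and the generators obey $a_1/b_1>\dots>a_{k+1}/b_{k+1}$ with all $b_j\ge0$.

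\emph{Feasibility.} First note that $s(1)=0$ and $s(n)=k+1$: because $\mb{v}_0=\mb{v}^-$ and $\mb{v}_{k+1}=\mb{v}^+$ carry the smallest and largest $\pi_2$-coordinates and the map $\mb{v}_i\mapsto\tilde{\mb{v}}_i$ preserves the second coordinate, $\tilde{\mb{v}}_0$ and $\tilde{\mb{v}}_{k+1}$ are the extreme points of $\Delta_\Pi$ with smallest and largest $\tpi_2$, hence lie on $\rside(\Delta_\Pi)$. The matching equation at $s(1)=0$ fixes $z_0=h(\mb{v}_0)$. For a consecutive pair $s(\ell-1)<s(\ell)$ of matched indices, subtracting the corresponding matching equations gives $\sum_{j=s(\ell-1)+1}^{s(\ell)}z_j=h(\mb{v}_{s(\ell)})-h(\mb{v}_{s(\ell-1)})$; summing the alignment relations $z_j=K_{s(\ell)}b_j-a_j$ over the same range and using $\sum_{j=s(\ell-1)+1}^{s(\ell)}b_j=\pi_2[\mb{v}_{s(\ell)}]-\pi_2[\mb{v}_{s(\ell-1)}]>0$ (consecutive vertices on the right side of a convex polygon have strictly increasing $\pi_2$) determines $K_{s(\ell)}$, and then every $z_j$ in the block, uniquely; so \eqref{eq:system_for_z_coefficients} always has a solution. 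The same computation yields $K_{s(\ell)}=\cotan{\tilde{\mb{v}}_{s(\ell-1)}}{\tilde{\mb{v}}_{s(\ell)}}$, which is used below. (A block containing some $j$ with $b_j=0$ is harmless --- the alignment relation then reads $z_j=-a_j$ --- and residual degeneracies are removed by an infinitesimal perturbation of the generators, exactly as in Section~\ref{sec:constr-affine-contr}.)

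\emph{Preservation of the objective.} The generators of $\hat{\Pi}$ being $(a_j+z_j,\ b_j)$, set $\mb{y}_i\bydef\bigl(\sum_{j=0}^{i}(a_j+z_j),\ \sum_{j=0}^{i}b_j\bigr)$. For a matched index $i=s(\ell)$, the matching constraint gives $\mb{y}_{s(\ell)}=\bigl(\pi_1[\mb{v}_{s(\ell)}]+h(\mb{v}_{s(\ell)}),\ \pi_2[\mb{v}_{s(\ell)}]\bigr)=\tilde{\mb{v}}_{s(\ell)}$; for a non-matched $i$ with $s(\ell-1)<i<s(\ell)$, the segments $[\mb{y}_{j-1},\mb{y}_j]$ inside that block all have cotangent $K_{s(\ell)}$ by the alignment constraints, so $\mb{y}_i\in[\tilde{\mb{v}}_{s(\ell-1)},\tilde{\mb{v}}_{s(\ell)}]$. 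Since $b_j\ge0$ for all $j$ and the successive block cotangents $K_{s(2)}\ge\dots\ge K_{s(n)}$ coincide with the non-increasing edge cotangents of $\rside(\Delta_\Pi)$, the polygonal line $\mb{y}_0,\mb{y}_1,\dots,\mb{y}_{k+1}$ is precisely $\rside(\hat{\Pi})$ and passes exactly through the points of $\rside(\Delta_\Pi)$ (cf.\ Figure~\ref{fig:algorithm_2_iterations}). Since $\hat{\Pi}$ is a zonogon (a projection of $\H_{k+1}$) and has the same right side as $\conv(\{\tilde{\mb{v}}_0,\dots,\tilde{\mb{v}}_{k+1}\})$, it follows that $\hat{\Pi}=\zhull(\{\tilde{\mb{v}}_0,\dots,\tilde{\mb{v}}_{k+1}\})$.

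Finally, since $J^*$ is convex (Property~\textbf{P2}), Corollary~\ref{corol:max_on_zonogon_hull} applied to $\hat{\Pi}=\zhull(\{\tilde{\mb{v}}_i\})$ shows that the optimal value of $(AFF)$ in \eqref{eq:AFF_problem_for_affine_cost} equals $\max_{\rside(\Delta_\Pi)}\bigl[\tpi_1+J^*(\tpi_2)\bigr]$, which by Lemma~\ref{lem:maximum_in_OPT_affine_cost} is the optimal value of $(OPT)$ in \eqref{eq:OPT_problem_for_affine_cost}, i.e.\ the first maximum in \eqref{eq:simple_notation_max_problem_h_cost}, namely $J_{mM}$. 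On the other hand, the parametrization $\hpi_1(\mb{w})=\pi_1(\mb{w})+z(\mb{w})$, $\hpi_2(\mb{w})=\pi_2(\mb{w})$ identifies the optimal value of $(AFF)$ with the second maximum in \eqref{eq:simple_notation_max_problem_h_cost}, so the two agree, proving \eqref{eq:simple_notation_max_problem_h_cost}. I expect the main obstacle to be the middle step --- verifying that matching and alignment force the partial-sum line $\{\mb{y}_i\}$ to reconstruct $\rside(\Delta_\Pi)$ as the right side of a genuine zonogon, which is where the generator ordering, the sign conditions $b_j\ge0$, and the convexity of $\Delta_\Pi$ all enter, and where some care with degenerate generators is needed, exactly as in the affine-controller construction of Section~\ref{sec:constr-affine-contr}.
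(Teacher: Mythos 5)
Your proposal is correct and follows essentially the same route as the paper's proof: establishing $s(1)=0$, $s(n)=k+1$, exhibiting/deriving the block-by-block solution with $K_{s(\ell)}=\cotan{\tilde{\mb{v}}_{s(\ell-1)}}{\tilde{\mb{v}}_{s(\ell)}}$, showing via the partial-sum points $\mb{y}_i$ that matching and alignment force $\hat{\Pi}=\zhull(\{\tilde{\mb{v}}_0,\dots,\tilde{\mb{v}}_{k+1}\})$, and then invoking Corollary \ref{corol:max_on_zonogon_hull} together with Lemma \ref{lem:maximum_in_OPT_affine_cost}. The only cosmetic difference is that the paper writes down the explicit closed-form solution for the $z_j$ and $K_{s(i)}$ and verifies it, whereas you derive the same solution by differencing the matching equations and summing the alignment relations.
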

\begin{proof}
  We first note that $s(1) = 0$ and $s(n)=k+1$, i.e. $\tilde{\mb{v}}_0 ,
  \tilde{\mb{v}}_{k+1} \in \rside(\Delta_\Pi)$. To see why that is the case, 
  note that, by (\ref{eq:vertex_Pi_min_max}),
  $\mb{v}_0$ will always have the smallest $\pi_2$ coordinate in the zonogon $\Pi$. 
  Since the transformation (\ref{eq:vtilde_definition_for_affine_cost}) yielding 
  $\tilde{\mb{v}}_i$ leaves the second coordinate unchanged, it will always be true that 
  $\tilde{\mb{v}}_0 = \argmax_{\tilde{\pi}_1} \argmin_{\tilde{\pi}_2} 
  \left\{ \tilde{\mb{v}}_i, \, i \in \{0,\dots,k+1\}
  \right\}$, which will immediately imply that $\tilde{\mb{v}}_0 \in \rside(\Delta_\Pi)$.
  The proof for $\tilde{\mb{v}}_{k+1}$ follows in an identical matter, 
  since $\mb{v}_{k+1}$ has the largest $\pi_2$ coordinate in $\Pi$.

  It can then be checked that the following choice of $z_i$ always satisfies 
  system (\ref{eq:system_for_z_coefficients}):
  \begin{align*}
    z_0 &= h(\mb{v}_0); \quad 
    z_j = K_{s(i)} \cdot b_j - a_j, \quad  \forall\, j \in \{s(i-1)+1,\dots,s(i)\},
    \quad \forall \, i \in \{2,\dots,n\}, \\
    K_{s(i)} &= \frac{z_{s(i-1)+1}+\dots+z_{s(i)}+a_{s(i-1)+1}+\dots+a_{s(i)}}
    {b_{s(i-1)+1}+\dots+b_{s(i)}} = 
    \frac{h(v_{s(i)})-h(v_{s(i-1)})+a_{s(i-1)+1}+\dots+a_{s(i)}}
    {b_{s(i-1)+1}+\dots+b_{s(i)}}.
  \end{align*}
  
  The proof of the second part of the lemma will be analogous to 
  that of Lemma \ref{lem:affine_control_preserves_objective}. To start, 
  consider the feasible set of problem 
  $(AFF)$ in (\ref{eq:AFF_problem_for_affine_cost}),
  namely the zonogon $\hat{\Pi}$, and note that, from (\ref{eq:pi1_pi2_definition}), 
  its generators are given by $\mb{a} + \mb{z}$, $\mb{b}$:
  \begin{align}
    \left[\begin{array}{c}
        \mb{a} + \mb{z} \\ \mb{b}
      \end{array} \right] = 
    \left[\begin{array}{cccccc}
        a_1 + z_1& \dots & a_{s(i)}+z_{s(i)} & a_{s(i)+1}+z_{s(i)+1}
        &\dots & a_{k+1} + z_{k+1} \\
        b_1 & \dots & b_{s(1)} & b_{s(1)+1} &\dots & b_{k+1}
      \end{array} \right].\label{eq:final_zonotope_generators_affine_cost}
  \end{align}
  By introducing the following points in $\R^2$:
  \begin{align*}
    \mb{y}_i &= \left( \sum_{j=0}^i (a_j+z_j), ~ \sum_{j=0}^i b_j \right),
  \end{align*}
  we have the following simple claims:
  \begin{itemize}
  \item For any $\mb{v}_i \in \Pi$ that is \emph{matched}, i.e. 
    $\tilde{\mb{v}}_{i} \in \rside(\Delta_\Pi)$, with 
    $\mb{w}_i=[1,1,\dots,1,0,\dots,0]$ 
    denoting the unique\footnote{Recall that we are, again, working under Assumption
      \ref{as:zonogon_max_vertices}, which implies uniqueness by part (iv) of 
      Lemma \ref{lem:zonotope_properties} in the Appendix)} vertex of $\H_{k+1}$ satisfying 
    $\left(\pi_1(\mb{w}_i), \pi_2(\mb{w}_i) \right) = \mb{v}_i$,
    we have:
    \begin{align*}
      \mb{y}_i \overset{\tiny \eqref{eq:AFF_problem_for_affine_cost}}{=} 
      \left( \pi_1(\mb{w}_i) + z(\mb{w}_i), \pi_2(\mb{w}_i) \right) 
      \overset{\tiny \eqref{eq:system_for_z_coefficients}}{=} 
      \left( \pi_1[\mb{v}_i] + h(\mb{v}_i), \pi_2[\mb{v}_i] \right)
      \overset{\tiny \eqref{eq:vtilde_definition_for_affine_cost}}{=}
      \tilde{\mb{v}}_i.
    \end{align*}
    The first equality follows from the definition of the zonogon $\hat{\Pi}$,
    the second follows because any $\tilde{\mb{v}}_{i} \in \rside(\Delta_\Pi)$ is 
    \emph{matched} in system \eqref{eq:system_for_z_coefficients}, and the 
    third equality represents the definition of the points $\tilde{\mb{v}}_i$.
  \item For any vertex $\mb{v}_j \in \Pi$,
    that is \emph{not} matched, i.e. $\tilde{\mb{v}}_j \notin \rside(\Delta_\Pi)$,
    and $s(i) < j < s(i+1)$ for some $i$, we have
      $\mb{y}_j \in [\mb{y}_{s(i)},\mb{y}_{s(i+1)}]$.
    This can be seen by using the \emph{alignment} conditions in system 
    (\ref{eq:system_for_z_coefficients}) in conjunction with 
    (\ref{eq:final_zonotope_generators_affine_cost}),
    since the segments in $\R^2$ given by $[\mb{y}_{s(i)},\mb{y}_{s(i)+1}],
    [\mb{y}_{s(i)+1},\mb{y}_{s(i)+2}],\dots,[\mb{y}_{s(i+1)-1},\mb{y}_{s(i+1)}]$ 
    will always be parallel, with common cotangent given by $K_{s(i+1)}$.
  \end{itemize}
  For a geometric interpretation, the reader is referred 
  back to Figure \ref{fig:algorithm_2_iterations}.
  Corroborating these results with the fact that $\left\{ \tilde{\mb{v}}_{s(1)},\dots,
    \tilde{\mb{v}}_{s(n)} \right\} = \rside(\Delta_\Pi)$ must always satisfy:
  \begin{align}
    \cotan{\tilde{\mb{v}}_{s(1)}}{\tilde{\mb{v}}_{s(2)}} \geq 
    \cotan{\tilde{\mb{v}}_{s(2)}}{\tilde{\mb{v}}_{s(3)}} \geq \dots \geq 
    \cotan{\tilde{\mb{v}}_{s(n-1)}}{\tilde{\mb{v}}_{s(n)}},
    \label{eq:cotangents_right_side_affine_cost}
  \end{align}
  we immediately obtain that the points $\left\{ \mb{y}_{s(1)}, \mb{y}_{s(2)}, 
    \dots, \mb{y}_{s(n)} \right\}$ (corresponding to 
  $\tilde{\mb{v}}_i \in \rside(\Delta_\Pi)$)
  exactly represent the right side of the zonogon $\hat{\Pi}$, 
  which, in turn, implies that $\hat{\Pi} \equiv \zhull\left( 
    \left\{ \tilde{\mb{v}}_0, \tilde{\mb{v}}_1, \dots, \tilde{\mb{v}}_{k+1} 
    \right\} \right)$.
  But then, by Corollary \ref{corol:max_on_zonogon_hull}, the maximum value
  of problem $(OPT)$ in (\ref{eq:OPT_problem_for_affine_cost}) is equal to 
  the maximum value of problem $(AFF)$ 
  in (\ref{eq:AFF_problem_for_affine_cost}), and, since the former is always $J_{mM}$,
  so is that latter.
\end{proof}

In order to complete the second step of the induction, we must only show that the 
robust domination constraint (\ref{eq:simple_notation_robust_h_cost}) is 
also obeyed:
\begin{align*}
  z(\mb{w}) &\geq h\left(\pi_2(\mb{w})\right) ~\Leftrightarrow~
  z_0 + z_1 \cdot w_1 + \dots + z_{k+1}\cdot w_{k+1} \geq 
  h\left(b_0 + b_1 \cdot w_1 + \dots + b_{k+1}\cdot w_{k+1}\right), 
  ~ \forall \, \mb{w} \in \H_{k+1}.
\end{align*}
The following lemma will take us very close to the desired result:
\begin{lemma}
  \label{lem:affine_cost_dominates_convex_cost_at_extreme_points}
  The coefficients for the affine cost $z(\mb{w})$ computed in Algorithm 2
  will always satisfy the following property:
  \begin{align*}
    h\left(b_0 + b_{j(1)} + \dots + b_{j(m)}\right) \leq 
    z_0 + z_{j(1)} + \dots + z_{j(m)}, ~ \forall \, 
    j(1),\dots,j(m) \in \{1,\dots,k+1\}, \forall \, m \in \{1,\dots,k+1\}.
  \end{align*}
\end{lemma}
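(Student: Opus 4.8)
The plan is to reduce the inequality over an arbitrary vertex of $\H_{k+1}$ to the already-established matching and alignment structure of Algorithm 2. Fix indices $j(1) < j(2) < \dots < j(m)$ in $\{1,\dots,k+1\}$, and let $\mb{w}^\star \in \H_{k+1}$ be the $\{0,1\}$-vector with ones exactly in positions $j(1),\dots,j(m)$; then $z_0 + z_{j(1)} + \dots + z_{j(m)} = z(\mb{w}^\star)$ and $b_0 + b_{j(1)} + \dots + b_{j(m)} = \pi_2(\mb{w}^\star)$, so the claim is exactly $z(\mb{w}^\star) \ge h(\pi_2(\mb{w}^\star))$. In the $(\hat\pi_1,\hat\pi_2)$-plane this vertex $\mb{w}^\star$ maps to a point $\mb{p} = (\pi_1(\mb{w}^\star) + z(\mb{w}^\star), \pi_2(\mb{w}^\star))$ of the zonogon $\hat\Pi$, and in the $(\tilde\pi_1,\tilde\pi_2)$-plane the same disturbance vector would map (via the convex cost) to the point $\mb{\tilde p} = (\pi_1(\mb{w}^\star) + h(\pi_2(\mb{w}^\star)), \pi_2(\mb{w}^\star))$. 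Since $\mb{p}$ and $\mb{\tilde p}$ have the same second coordinate, the desired inequality is precisely the statement that $\mb{p}$ lies weakly to the \emph{right} of $\mb{\tilde p}$, i.e.\ $\hat\pi_1[\mb{p}] \ge \tilde\pi_1[\mb{\tilde p}]$.

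The key step is to exploit the result established inside the proof of Lemma \ref{lem:affine_cost_preserves_JmM}, namely that $\hat\Pi = \zhull(\{\tilde{\mb{v}}_0,\dots,\tilde{\mb{v}}_{k+1}\})$, so that $\rside(\hat\Pi) = \rside(\Delta_\Pi) = \{\tilde{\mb{v}}_{s(1)},\dots,\tilde{\mb{v}}_{s(n)}\}$. Next I would observe that the point $\mb{p}$ lies on the right side of $\hat\Pi$: indeed, by the monotonicity of the generator cotangents $K_{s(2)} \ge K_{s(3)} \ge \dots$ built into the alignment equations (together with the matching equations forcing consecutive partial sums to land on $\rside(\Delta_\Pi)$), \emph{every} vertex $\mb{y}_j$ of the chain of partial sums lies on $\rside(\hat\Pi)$, and $\mb{p}$ — being a partial sum over the sub-multiset of generators indexed by $\{j(1),\dots,j(m)\}$, a subset of a monotone chain — is dominated in the $\hat\pi_1$-direction by the corresponding partial sum taken along the full right-side chain. (This is the same ``pick the largest $\pi_1$ among points with a given $\pi_2$'' argument as in Lemma \ref{lem:theta_restricted_region} and the $\mb{v}_j^{\#}$ construction.) Concretely: since all $b_j \ge 0$, the $\pi_2$-coordinate $\pi_2(\mb{w}^\star)$ lies between $\pi_2[\mb{v}_0]$ and $\pi_2[\mb{v}_{k+1}]$, so there are consecutive right-side indices $s(i) \le s(i+1)$ with $\pi_2[\tilde{\mb{v}}_{s(i)}] \le \pi_2(\mb{w}^\star) \le \pi_2[\tilde{\mb{v}}_{s(i+1)}]$; the point of $\rside(\hat\Pi)$ at abscissa $\pi_2(\mb{w}^\star)$ is the convex combination $\mb{r}$ of $\tilde{\mb{v}}_{s(i)}$ and $\tilde{\mb{v}}_{s(i+1)}$, and because $\hat\Pi$ is a zonogon whose right side is this chain, we have $\hat\pi_1[\mb{p}] \le \hat\pi_1[\mb{r}]$ — wait, the inequality I actually need is $\mb p$ to the right of $\mb{\tilde p}$, so the relevant comparison is between $\mb{\tilde p}$ and $\mb{r}$: since $\tilde{\mb{v}}_{s(i)}, \tilde{\mb{v}}_{s(i+1)} \in \rside(\Delta_\Pi) = \rside(\conv\{\tilde{\mb v}_0,\dots,\tilde{\mb v}_{k+1}\})$ and $\mb{\tilde p}$ has $\tilde\pi_1$-coordinate $\pi_1(\mb{w}^\star)+h(\pi_2(\mb{w}^\star))$ which by convexity of $h$ and the structure of the projection is $\le$ the $\tilde\pi_1$-coordinate of the right-side chain at that abscissa, we get $\tilde\pi_1[\mb{\tilde p}] \le \tilde\pi_1[\mb r] = \hat\pi_1[\mb r]$; and $\mb r$ in turn equals the point of $\rside(\hat\Pi)$ at that abscissa, which by the zonogon-hull identity coincides with the partial-sum point along the matched chain — so $\hat\pi_1[\mb r] = \hat\pi_1[\mb p']$ for the ``canonical'' partial sum $\mb p'$, and finally $\hat\pi_1[\mb p] \le \hat\pi_1[\mb p']$ because summing a sub-multiset of the generators with all-nonnegative $\hat\pi_2$-increments stays inside $\hat\Pi$ with $\hat\pi_1$ no larger than the right-side value. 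Chaining these, $\tilde\pi_1[\mb{\tilde p}] \le \hat\pi_1[\mb p]$, which is the claim.

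The main obstacle, and the place I expect to need the most care, is making the ``sub-multiset partial sum lies left of the right-side chain'' argument fully rigorous: one must check that for a zonogon with generators sorted by decreasing cotangent, any point obtained as $\mb{y}_0$ plus a subset of the generator vectors has $\hat\pi_1$-coordinate no larger than the right-side boundary point at the same $\hat\pi_2$-coordinate. This is a standard fact about zonogons (it is essentially the statement that the right side is the upper envelope of the generator arrangement, i.e.\ part (i)/(iii) of Lemma \ref{lem:zonotope_properties}), and it would be cleanest to quote that lemma directly and then invoke convexity of $h$ (so that $h(\pi_2(\mb{w}^\star))$ is dominated by the linear interpolant whose graph is the right-side chain of $\Delta_\Pi$, exactly as in the $k=1$ base case in Section \ref{sec:induct_hypo}). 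Everything else — identifying partial sums with vertex images, reading off coordinates — is bookkeeping already carried out in Lemma \ref{lem:affine_cost_preserves_JmM}.
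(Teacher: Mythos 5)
There is a genuine gap, and it sits exactly at the step you flagged with ``wait.'' Your final chain establishes (at best) two inequalities of the form $\tilde{\pi}_1[\tilde{\mb{p}}] \leq \hat{\pi}_1[\mb{p}']$ and $\hat{\pi}_1[\mb{p}] \leq \hat{\pi}_1[\mb{p}']$: both the convex-cost point and the affine-cost point are bounded above by the same right-side boundary value. Two quantities sharing an upper bound cannot be compared to each other, so the conclusion $\tilde{\pi}_1[\tilde{\mb{p}}] \leq \hat{\pi}_1[\mb{p}]$ does not follow. The problem is not cosmetic: what you need is a \emph{lower} bound on $\hat{\pi}_1[\mb{p}] = \pi_1(\mb{w}^\star) + z(\mb{w}^\star)$ at an arbitrary hypercube vertex $\mb{w}^\star$, and the right-side/zonogon-hull machinery (Lemma \ref{lem:theta_restricted_region}, Corollary \ref{corol:max_on_zonogon_hull}, Lemma \ref{lem:zonotope_properties}) only ever produces \emph{upper} bounds, since it says every point of $\hat{\Pi}$ lies weakly to the left of $\rside(\hat{\Pi})$. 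Note also that the lemma is not a formal consequence of ``an affine function interpolating a convex function along the matched chain'': already in one dimension, the chord of a convex function dominates it only \emph{between} the interpolation nodes. Domination at all $2^{k+1}$ hypercube vertices therefore requires quantitative information about the coefficients $z_j$ themselves, not just membership of $\mb{p}$ in $\hat{\Pi}$.

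The paper supplies exactly that information, by algebra rather than by position. From the matching, alignment, and right-side conditions it extracts three facts: exact matching at the $\mb{v}_{s(i)}$; the prefix inequalities $h(\mb{v}_j) - h(\mb{v}_0) \leq z_1 + \dots + z_j$ at unmatched indices, which follow from \eqref{eq:cannot_match_intermediate_point}; and the monotonicity $z_1/b_1 \leq \dots \leq z_{s(2)}/b_{s(2)}$ within each alignment block, which follows from equal cotangents $\frac{a_j+z_j}{b_j}$ combined with the strictly decreasing $\frac{a_j}{b_j}$. It then inducts on $m$: each new index $j(m+1)$ is absorbed by writing $b_0 + b_{j(1)} + \dots + b_{j(m+1)}$ as a convex combination of the already-handled sum and a full prefix sum, applying convexity of $h$, and invoking the mediant inequality together with the monotone ratios $z_j/b_j$; the blocks are finally stitched together using the increasing-increments property of convex functions. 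To repair your argument you would need to reproduce these coefficient-level estimates; the purely geometric comparison to $\rside(\hat{\Pi})$ cannot yield them.
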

\begin{proof}
  Before proceeding with the proof, we will first list 
  several properties related to the construction of the affine cost.
  We claim that, upon termination, Algorithm 2 
  will produce a solution to the following system:
  \begin{align}
    &\begin{cases}
      z_0 &= h\left(\mb{v}_{s(1)}\right) \\
      z_0 + z_1 + \dots + z_{s(2)} &= h\left(\mb{v}_{s(2)} \right) \\
      \qquad \vdots    &\qquad \vdots \\
      z_0 + z_1 + \dots + z_{s(n)} &= h\left(\mb{v}_{s(n)} \right) \\
      \frac{z_1+a_1}{b_1} &= \dots = \frac{z_{s(2)}+a_{s(2)}}{b_{s(2)}} = K_{s(2)} \\
      \qquad \vdots  &\qquad \qquad \vdots \\
      \frac{z_{s(n-1)+1}+a_{s(n-1)+1}}{b_{s(n-1)+1}} &= \dots = 
      \frac{z_{s(n)}+a_{s(n)}}{b_{s(n)}} = K_{s(n)}
    \end{cases} \label{eq:system_coefs_full_blown} \\
    & K_{s(2)} \geq \dots \geq K_{s(n)} 
    \label{eq:K_increasing_on_zonotope} \\
    &\begin{cases}
      \frac{h\left(\mb{v}_j\right)-h\left(\mb{v}_{0}\right) + 
        a_{1}+\dots+a_j}{b_{1}+\dots+b_j} \leq 
      K_{s(2)} \leq \frac{h\left(\mb{v}_{s(2)}\right)-h\left(\mb{v}_{j}\right) + 
        a_{j+1}+\dots+a_{s(1)}}{b_{j+1}+\dots+b_{s(1)}}, \\
      \qquad \qquad \forall \, j \in \{1,\dots,s(2)-1\}\\
      \qquad \qquad \vdots \qquad \vdots \\ 
      \frac{h\left(\mb{v}_j\right)-h\left(\mb{v}_{s(n-1)}\right) + 
        a_{s(n-1)+1}+\dots+a_j}{b_{s(n-1)+1}+\dots+b_j} \leq
      K_{s(n)} \leq \frac{h\left(\mb{v}_{s(n)}\right)-h\left(\mb{v}_{j}\right) + 
        a_{j+1}+\dots+a_{s(n)}}{b_{j+1}+\dots+b_{s(n)}}, \\
      \qquad \qquad \forall \, j \in \{s(n-1)+1,\dots,s(n)-1\} ~.
    \end{cases}  \label{eq:cannot_match_intermediate_point}
  \end{align}
  Let us explain the significance of all the equations. 
  (\ref{eq:system_coefs_full_blown}) is simply a rewriting of 
  the original system (\ref{eq:system_for_z_coefficients}), which states 
  that at any vertex $\mb{v}_{s(i)}$, the value of the affine function 
  should exactly match the value assigned by the convex function $h(\cdot)$,
  and the coefficients $z_i$ between any two matched vertices should be 
  such that the resulting segments, $[z_j+a_j,b_j]$, are aligned (i.e. 
  the angle they form with the $\tilde{\pi}_1$ axis has the same cotangent, 
  specified by $K_{(\cdot)}$ variables). We note that we have explicitly used 
  the fact that $s(1)=0$, which we have shown in the first paragraph
  of the proof of Lemma \ref{lem:affine_cost_preserves_JmM}. 

  Equation (\ref{eq:K_increasing_on_zonotope}) is a simple restatement of 
  (\ref{eq:cotangents_right_side_affine_cost}), that the cotangents on the 
  right side of a convex hull must be decreasing.

  Equation (\ref{eq:cannot_match_intermediate_point}) is a direct consequence of 
  the fact that $\{\tilde{\mb{v}}_{s(1)},\tilde{\mb{v}}_{s(2)},
  \dots,\tilde{\mb{v}}_{s(n)}\}$ represent $\rside(\Delta_\Pi)$. To see why that is, consider 
  an arbitrary $j \in \{s(i)+1,\dots,s(i+1)-1\}$. Since 
  $\tilde{\mb{v}}_j \notin \rside(\Delta_\Pi)$, we have:
  \begin{align*}
    \cotan{\tilde{\mb{v}}_{s(i)}}{\tilde{\mb{v}}_{j}} & \leq 
    \cotan{\tilde{\mb{v}}_{j}}{\tilde{\mb{v}}_{s(i+1)}} ~
    \overset{\tiny \eqref{eq:pi1_pi2_definition},\eqref{eq:vtilde_definition_for_affine_cost}}
    {\Leftrightarrow} \\
    \frac{ a_{s(i)+1}+\dots+a_j + h\left(\mb{v}_j\right)-h\left(\mb{v}_{s(i)}\right)}
    {b_{s(i)+1}+\dots+b_j} &\leq \frac{a_{j+1}+\dots+a_{s(i+1)} +
      h\left(\mb{v}_{s(i+1)}\right)-h\left(\mb{v}_{j}\right)}{b_{j+1}+\dots+b_{s(i+1)}} ~
    \Leftrightarrow \\
    \frac{ a_{s(i)+1}+\dots+a_j + h\left(\mb{v}_j\right)-h\left(\mb{v}_{s(i)}\right)}
    {b_{s(i)+1}+\dots+b_j} &\leq K_{s(2)} \leq \frac{a_{j+1}+\dots+a_{s(i+1)} +
      h\left(\mb{v}_{s(i+1)}\right)-h\left(\mb{v}_{j}\right)}{b_{j+1}+\dots+b_{s(i+1)}},
  \end{align*}
  where, in the last step, we have used the mediant 
  inequality\footnote{If $b,d>0$ and$\frac{a}{b}\leq\frac{c}{d}$,
    then $\frac{a}{b} \leq \frac{a+c}{b+d} \leq \frac{c}{d}$.} and the 
  fact that, from (\ref{eq:system_coefs_full_blown}), 
  $K_{s(2)} = \cotan{\tilde{\mb{v}}_{s(i)}}{\tilde{\mb{v}}_{s(i+1)}} = 
  \frac{a_{s(i)+1}+\dots+a_{s(i+1)}+h\left(\mb{v}_{s(i+1)}\right)-h\left(\mb{v}_{s(i)}\right)}
  {b_{s(i)+1}+\dots+b_{s(i+1)}}$ (refer back to Figure \ref{fig:algorithm_2_iterations}
  for a geometrical interpretation).
  
  With these observations, we will now prove the claim of the lemma.
  The strategy of the proof will be to use induction on the size of 
  the subsets, $m$. First, we will show the property 
  for any subset of indices $j(1),\dots,j(m) \in \{s(1)=0,\dots,s(2)\}$, and will 
  then extend it to $j(1),\dots,j(m) \in \{s(i)+1,\dots,s(i+1)\}$ for any $i$, 
  and then to any subset of $\{1,\dots,k+1\}$. 

  The following implications of the conditions 
  (\ref{eq:system_coefs_full_blown}),
  (\ref{eq:K_increasing_on_zonotope}) and (\ref{eq:cannot_match_intermediate_point}),
  are stated here for convenience, since they will be used 
  throughout the rest of the proof:
  \begin{align}
    & h\left(\mb{v}_{s(1)}\right) = h(\mb{v}_0) = z_0; \quad h(\mb{v}_{s(2)}) = 
    z_0 + z_1 + \dots + z_{s(2)}.
    \label{eq:matching_v0_vs1} \\
    & h(\mb{v}_j) - h(\mb{v}_0) \leq z_1 + \dots + z_j, 
    \qquad \forall \, j \in \{1,\dots,s(2)-1\}.
    \label{eq:inequality_ok_at_partial_sums} \\
    & \frac{z_1}{b_1} \leq \dots \leq \frac{z_j}{b_j} \leq \dots \leq \frac{z_{s(2)}}{b_{s(2)}},
    \qquad \forall \, j \in \{1,\dots,s(2)-1\}.
    \label{eq:almost_increasing_increments}
  \end{align}
  Their proofs are straightforward.
  (\ref{eq:matching_v0_vs1}) follows directly from system 
  (\ref{eq:system_coefs_full_blown}), and:
  \begin{align*}
    &\frac{h(\mb{v}_j) - h(\mb{v}_0) + a_1+\dots + a_j}{b_1+\dots+b_j} 
    \overset{\tiny \eqref{eq:cannot_match_intermediate_point}}{\leq} 
    K_{s(2)} \overset{\tiny \eqref{eq:system_coefs_full_blown}}{=}
    \frac{z_1+\dots+z_j + a_1+\dots + a_j}{b_1+\dots+b_j} ~\Rightarrow 
    \eqref{eq:inequality_ok_at_partial_sums} ~\text{true}.\\
    & \begin{cases}
      \eqref{eq:system_coefs_full_blown}:~ 
      \frac{a_1+z_1}{b_1} = \dots = \frac{a_j+z_j}{b_j} = \dots =
      \frac{a_{s(2)}+z_{s(2)}}{b_{s(2)}}~  \\
      \Pi ~\text{zonogon}~ \Rightarrow 
      \frac{a_1}{b_1} > \dots > \frac{a_j}{b_j} > \dots 
      > \frac{a_{s(2)}}{b_{s(2)}}
    \end{cases} ~\Rightarrow~ \eqref{eq:almost_increasing_increments} ~\text{true}.
  \end{align*}
  We can now proceed with the proof, by checking the induction for $m=1$. 
  We would like to show that:
  \begin{align*}
    h\left(b_0 + b_{j}\right) \leq 
    z_0 + z_{j}, \quad \forall \, j \in \{1,\dots,s(2)\}
  \end{align*}
  Writing $b_0+b_j$ as:
  \begin{align*}
    b_0 + b_j &= (1-\lambda) \cdot b_0 + \lambda \cdot (b_0 + \dots + b_j) \\
    \lambda &= \frac{b_j}{b_1 + \dots + b_j},
  \end{align*}
  we obtain:
  \begin{align*}
    h(b_0 + b_j) &\leq (1-\lambda) \cdot h(b_0) + 
    \lambda \cdot \expl{\equiv h(\mb{v}_j)}
    {h(b_0 + \dots + b_j)} \\
    &= h(\mb{v}_0) + \frac{b_j}{b_1+\dots+b_j} \left[\, h(\mb{v}_j) - 
      h(\mb{v}_0) \,\right]  ~\leq~ (\text{by (\ref{eq:matching_v0_vs1}) 
      if $j=s(2)$ or 
      (\ref{eq:inequality_ok_at_partial_sums}) otherwise}) \\
    &\overset{}{\leq} z_0 + 
    \frac{b_j}{b_1+\dots+b_j} \left( z_1+\dots+z_j \right) ~\leq~
    (\text{by (\ref{eq:almost_increasing_increments}) and the mediant inequality})\\
    &\leq z_0 + z_j.
  \end{align*}
  Assume the property is true for any subsets of size $m$. Consider a 
  subset $j(1),\dots,j(m),j(m+1)$, and, without loss of generality, let
  $j(m+1)$ be the largest index. With the convex combination:
  \begin{align*}
    b^* &\bydef b_0 + b_{j(1)} + \dots + b_{j(m)} + b_{j(m+1)} \\
    &~= (1-\lambda) \cdot ( b_0 + b_{j(1)} + \dots + b_{j(m)} ) + \lambda \cdot 
    (b_0 + b_1 + \dots + b_{j(m+1)-1} + b_{j(m+1)} ), \\
    \text{where}~ \lambda &= \frac{b_{j(m+1)}}{(b_1+b_2+\dots+b_{j(m+1)})-( b_{j(1)} +
      b_{j(2)} + \dots + b_{j(m)})},
  \end{align*}
  we obtain:
  \begin{align*}
    h(b^*) &\leq (1-\lambda) \cdot h(b_0 + b_{j(1)} + \dots + b_{j(m)}) + \lambda \cdot 
    h\left( \mb{v}_{i(m+1)}\right) \, \leq \, (\text{by induction hypothesis and 
      (\ref{eq:matching_v0_vs1}), (\ref{eq:inequality_ok_at_partial_sums})})\\
    &\leq (1-\lambda) \cdot (z_0 + z_{j(1)} + \dots + z_{j(m)}) + \lambda \cdot 
    \left( z_0 + z_1 + \dots + z_{i(m+1)} \right) \\
    &= z_0 + z_{j(1)} + \dots + z_{j(m)} + \frac{b_{j(m+1)}}
    {(b_1+b_2+\dots+b_{j(m+1)})-( b_{j(1)} + b_{j(2)} + \dots + b_{j(m)})} \cdot \\
    & \qquad \cdot 
    \left[ (z_1+z_2+\dots+z_{j(m+1)})-( z_{j(1)} + z_{j(2)} + 
      \dots + z_{j(m)}) \right] \, \leq 
    \, (\text{by (\ref{eq:almost_increasing_increments}) and 
      mediant inequality})\\
    &\leq z_0 + z_{j(1)} + \dots + z_{j(m)} + z_{j(m+1)}.
  \end{align*}
  We claim that the exact same procedure can be repeated for a subset of indices 
  from $\{s(i)+1,\dots,s(i+1)\}$, for any index $i \in \{1,\dots,n-1\}$. 
  We would simply 
  be using the adequate inequality from (\ref{eq:cannot_match_intermediate_point}),
  and the statements equivalent to (\ref{eq:matching_v0_vs1}), 
  (\ref{eq:inequality_ok_at_partial_sums}) and 
  (\ref{eq:almost_increasing_increments}). 
  The following results would be immediate:
  \begin{align}
    &h\left( (b_0+b_1+\dots+b_{s(i)}) + b_{j(1)} + \dots + b_{j(m)} \right) \leq 
    \left(z_0+z_1+\dots+z_{s(i)}\right) + z_{j(1)} + \dots + z_{j(m)}, 
    \label{eq:inequalities_ok_in_any_interval}\\
    &\qquad \qquad 
    \forall \, i \in \{1,\dots,n\}, ~\forall \, 
    j(1),\dots,j(m) \in \{s(i)+1,\dots,s(i+1)\}.
    \nonumber
  \end{align}
  Note that instead of the term $b_0$ for the argument of $h(\cdot)$, we would 
  use the complete sum $b_0+b_1+\dots+b_{s(i)}$, and, similarly, instead of 
  $z_0$ we would have the complete sum $z_0+z_1+\dots+z_{s(i)}$. With these 
  results, we can make use of the increasing increments property of convex functions:
  \begin{align*}
    \frac{h(x_1+\Delta) - h(x_1)}{\Delta} \leq 
    \frac{h(x_2+\Delta) - h(x_2)}{\Delta}, \quad \forall \, \Delta > 0, x_1 \leq x_2 ~,
  \end{align*}
  to obtain the following result:
  \begin{align*}
    & h\left(b_0 + \expl{j(\cdot) \in \{1,\dots,s(2)\}}{b_{j(1)} + \dots + b_{j(m)}} + 
      \expl{i(\cdot) \in \{s(2)+1,\dots,s(3)\}}{b_{i(1)} + \dots + b_{i(l)}} \right)
    - h\left(b_0 + b_{j(1)} + \dots + b_{j(m)}\right) ~\leq~\\
    &\qquad \leq h\left(b_0 + \expl{\text{all indices in}~ \{1,\dots,s(2)\}}
      {b_1 + \dots + b_{s(2)}} + b_{i(1)} + \dots + b_{i(l)} \right)
    - \expl{\bydef h(\mb{v}_{s(2)})}
    {h\left(b_0 + {b_1 + \dots + b_{s(2)}} \right)} 
    \overset{\tiny \eqref{eq:matching_v0_vs1}, \eqref{eq:inequalities_ok_in_any_interval}}{\leq}  \\
    &\qquad \leq \left(z_0+z_1+\dots+z_{s(2)}\right) + z_{i(1)} + \dots + z_{i(l)} - 
    \left(z_0+z_1+\dots+z_{s(2)}\right) \\
    &\qquad =  z_{i(1)} + \dots + z_{i(l)} \qquad \Rightarrow \\
    &h\left(b_0 + {b_{j(1)} + \dots + b_{j(m)}} + {b_{i(1)} + \dots + b_{i(l)}} \right)
    \leq h\left(b_0 + b_{j(1)} + \dots + b_{j(m)}\right) +  z_{i(1)} + \dots + z_{i(l)} 
    \overset{\tiny \eqref{eq:inequalities_ok_in_any_interval}}{\leq} \\
    &\qquad \qquad \qquad \leq z_0 +  z_{j(1)} + \dots + z_{j(m)} +  z_{i(1)} + \dots + z_{i(l)}.
  \end{align*}
  We showed the property for indices drawn only from the first two intervals, 
  $\{s(1)+1,\dots,s(2)\}$ and $\{s(2)+1,\dots,s(3)\}$,
  but it should be clear how the argument can be immediately extended to any 
  collection of indices, drawn from any intervals. We omit the details for brevity,
  and conclude that the claim of the lemma is true.
\end{proof}

We are now ready for the last major result:
\begin{lemma}
  \label{lem:affine_cost_dominates_convex_cost}
  The affine cost $z(\mb{w})$ computed by Algorithm 2 
  always dominates the convex cost $h(\pi_2(\mb{w}))$:
  \begin{align*}
    h\left(b_0 + \sum_{i=1}^{k+1} b_i \cdot w_i \right)
    \leq z_0 + \sum_{i=1}^{k+1} z_i \cdot w_i, \quad 
    \forall \, \mb{w} \in \H_{k+1} = [0,1]^{k+1}.
  \end{align*}
\end{lemma}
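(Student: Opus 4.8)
The plan is to deduce the inequality on all of $\H_{k+1}$ from the vertex inequalities already established in Lemma~\ref{lem:affine_cost_dominates_convex_cost_at_extreme_points}, exploiting that the left-hand side of the claimed inequality is a convex function of $\mb{w}$ whereas the right-hand side is affine.

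First I would note that $\phi(\mb{w}) \bydef h\bigl(b_0 + \sum_{i=1}^{k+1} b_i w_i\bigr)$ is convex on $\R^{k+1}$, being the composition of the convex function $h$ with an affine map, while $\psi(\mb{w}) \bydef z_0 + \sum_{i=1}^{k+1} z_i w_i$ is affine. The vertices of the cube $\H_{k+1} = [0,1]^{k+1}$ are precisely the $0/1$ vectors $\mb{w}^S$ indexed by subsets $S \subseteq \{1,\dots,k+1\}$, with $w_i^S = 1$ for $i \in S$ and $w_i^S = 0$ otherwise. I would then verify that $\phi \le \psi$ at every such vertex: for $S = \{j(1),\dots,j(m)\} \ne \emptyset$ this is exactly the bound $h(b_0 + b_{j(1)} + \dots + b_{j(m)}) \le z_0 + z_{j(1)} + \dots + z_{j(m)}$ furnished by Lemma~\ref{lem:affine_cost_dominates_convex_cost_at_extreme_points}, and for $S = \emptyset$ one needs $h(b_0) \le z_0$, which in fact holds with equality, since $s(1)=0$ (shown in the proof of Lemma~\ref{lem:affine_cost_preserves_JmM}) makes the first matching equation of \eqref{eq:system_for_z_coefficients} read $z_0 = h(\mb{v}_0) = h(\pi_2[\mb{v}_0]) = h(b_0)$, the last step using Assumption~\ref{as:consecutive_vertices_zeros}.

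Finally, given an arbitrary $\mb{w} \in \H_{k+1}$, I would write it as a convex combination of vertices, $\mb{w} = \sum_S \lambda_S\, \mb{w}^S$ with $\lambda_S \ge 0$ and $\sum_S \lambda_S = 1$, and conclude
\[
  \phi(\mb{w}) \;\le\; \sum_S \lambda_S\, \phi(\mb{w}^S) \;\le\; \sum_S \lambda_S\, \psi(\mb{w}^S) \;=\; \psi\Bigl( \sum_S \lambda_S\, \mb{w}^S \Bigr) \;=\; \psi(\mb{w}),
\]
using, in turn, convexity of $\phi$, the vertex bounds, and affineness of $\psi$; this is precisely the claimed domination $h\bigl(b_0 + \sum_i b_i w_i\bigr) \le z_0 + \sum_i z_i w_i$. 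Since all the substance has already been carried by Lemma~\ref{lem:affine_cost_dominates_convex_cost_at_extreme_points}, there is no real obstacle here; the only point requiring a moment's care is the bookkeeping for the empty subset, i.e.\ confirming $z_0 = h(b_0)$. (If one prefers to avoid invoking the convex-combination representation of a point of the cube, one can equivalently peel off the coordinates $w_1,\dots,w_{k+1}$ one at a time, each time using one-dimensional convexity of $\phi$ along that coordinate together with the vertex inequalities.)
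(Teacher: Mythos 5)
Your proposal is correct and follows essentially the same route as the paper: the paper also observes that $h(b_0+\sum_i b_i w_i) - (z_0+\sum_i z_i w_i)$ is convex in $\mb{w}$ and that a convex function attains its maximum over $\H_{k+1}$ at the $0/1$ vertices, where Lemma \ref{lem:affine_cost_dominates_convex_cost_at_extreme_points} gives nonpositivity. Your explicit handling of the empty-subset vertex via the matching equation $z_0 = h(\mb{v}_0)$ is a small point of extra care that the paper's proof passes over silently, but it changes nothing substantive.
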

\begin{proof}
  Note first that the function $f(\mb{w}) \bydef h\left( b_0 + 
    \sum_{i=1}^{k+1} b_i \cdot w_i \right) - (z_0 + \sum_{i=1}^{k+1} 
  z_i \cdot w_i)$ is a convex function of $\mb{w}$. Furthermore,
  the result of Lemma \ref{lem:affine_cost_dominates_convex_cost_at_extreme_points}
  can be immediately rewritten as:
  \begin{align*}
    h\left(b_0 + \sum_{i=1}^{k+1} b_i \cdot w_i \right)
    \leq z_0 + \sum_{i=1}^{k+1} z_i \cdot w_i, \,
    \forall \, \mb{w} \in \{0,1\}^{k+1} ~\Leftrightarrow~
    f(\mb{w}) \leq 0, \, \forall \, \mb{w} \in \{0,1\}^{k+1}.
  \end{align*}
  Since the maximum of a convex 
  function on a polytope occurs on the extreme points of the polytope, 
  and $\ext( \H_{k+1} ) = \{0,1\}^{k+1}$,
  we immediately have that:
  $\max_{\mb{w} \in \H_{k+1}} f(\mb{w}) = 
  \max_{\mb{w} \in \{0,1\}^{k+1}} f(\mb{w}) \leq 0$,
  which completes the proof of the lemma.
\end{proof}

We can now conclude the proof of correctness in the construction of the affine
stage cost, $z(\mb{w})$. With Lemma \ref{lem:affine_cost_dominates_convex_cost}, 
we have that the affine cost always dominates the convex cost $h(\cdot)$, thus 
condition (\ref{eq:simple_notation_robust_h_cost}) is obeyed. Furthermore, 
from Lemma \ref{lem:affine_cost_preserves_JmM},
the overall min-max cost remains unchanged even when incurring 
the affine stage cost, $z(\mb{w})$, hence condition 
(\ref{eq:simple_notation_max_problem_h_cost}) is also true.
This completes the construction of the affine cost, and hence also the 
full step of the induction hypothesis. 

\subsubsection{Proof of Main Theorem.}
\label{sec:proof_of_main_theorem}
To finalize the current section, we summarize 
the steps that have lead us to the result, thereby proving the main 
Theorem \ref{thm:main_theorem}.

\begin{proof}[Proof of Theorem \ref{thm:main_theorem}]
  In Section \ref{sec:induct_hypo}, we have verified the induction hypothesis 
  at time $k=1$. With the induction hypothesis assumed 
  true for times $t=1,\dots,k$, we have listed the initial 
  consequences in Lemma \ref{lem:theta_restricted_region} and 
  Corollary \ref{corol:max_on_zonogon_hull} of Section \ref{sec:simplified-notation}.
  By exploring the structure of the optimal control law, $u_{k+1}^*(x_{k+1})$, 
  and the optimal value function, $J_{k+1}^*(x_{k+1})$, in 
  Section \ref{sec:further_underst_induc_hypo}, we have finalized the 
  analysis of the induction hypothesis, and summarized our findings in 
  Lemmas \ref{lem:maximum_in_OPT} and \ref{lem:rside_delta_gamma}.
  
  Section \ref{sec:constr-affine-contr} then introduced the main 
  construction of the affine control law, $q_{k+1}(\mb{w}^{k+1})$, 
  which was shown to be robustly feasible 
  (Lemma \ref{lem:affine_control_coefficients_robust}). Furthermore, in
  Lemma  \ref{lem:affine_control_preserves_objective}, 
  we have shown that, when used in conjuction with the original convex state costs, 
  $h_{k+1}\left(x_{k+2}\right)$, this affine control preserves the min-max value 
  of the overall problem.
  
  In Section \ref{sec:construction_affine_stage_cost}, we have also introduced 
  an affine stage cost, $z_{k+2}(\mb{w}^{k+2})$, which, if incurred at time $k+1$,
  will always preserve the overall min-max value (Lemma \ref{lem:affine_cost_preserves_JmM}),
  despite being always larger than the original convex cost, $h_{k+1}\left(x_{k+2}\right)$
  (Lemma \ref{lem:affine_cost_dominates_convex_cost}). 
\end{proof}

\subsubsection{Counterexamples for potential extensions.}
\label{sec:counter_examples}
On first sight, one might be tempted to believe that the results in Theorem 
\ref{thm:main_theorem} could be immediately extended to more general problems. 
In particular, one could be tempted to ask one of the following natural 
questions:
\begin{enumerate}
\item Would both results of Theorem \ref{thm:main_theorem}
  (i.e. existence of affine control laws \emph{and} existence of affine stage costs)
  hold for a problem which also included linear constraints 
  coupling the controls $u_t$ across different time-steps? 
  (see \citet{BenTal05} for a situation when this might be of interest)
\item Would both results of Theorem \ref{thm:main_theorem} 
  hold for multi-dimensional linear systems? (i.e. problems 
  where $x_k \in \R^d, \, \forall \, k$, with $d \geq 2$)
\item Are affine policies in the disturbances optimal for 
  the two problems above?
\end{enumerate}

In the rest of the current section, we would like to show how these questions 
can all be answered negatively, using the following simple counterexample:
\begin{align*}
  & T=4, c_k = 1, ~ h_k(x_{k+1}) = \max\{ 18.5 \cdot x_{k+1}, \, -24 \cdot x_{k+1} \}, ~
  L_k =0, U_k = \infty, \, \forall \, k \in \{1,\dots,4\} ~,\\
 (CEx) \qquad &w_1 \in [-7,0], w_2 \in [-11,0], w_3 \in [-8,0], w_4 \in [-44,0] ~,\\
  &\sum_{i=1}^k u_i \leq 10 \cdot k ~, \forall \, k \in \{1,\dots,4\}.
\end{align*}
The first two rows describe a one-dimensional 
problem that fits the conditions of Problem 
\ref{prob:initial_problem} in Section \ref{sec:introduction}. 
The third row corresponds to a coupling constraint for controls 
at different times, so that the problem fits question (i) above. 
Furthermore, since the state in such a problem consists of 
two variables (one for $x_k$ and one for $\sum_{i=1}^k u_k$), 
the example also fits question (ii) above.

The optimal min-max value for the counterexample $(CEx)$ above can 
be found by solving a stochastic optimization problem (see \citet{BenTal05}), 
in which non-anticipatory decisions are computed at 
all the extreme points of the uncertainty set, i.e. for $\{\vlow{w}{1},\vup{w}{1}\}
\times \{\vlow{w}{2},\vup{w}{2}\} \times \{\vlow{w}{3},\vup{w}{3}\} \times 
\{\vlow{w}{4},\vup{w}{4}\}$.
The resulting model, which is a large linear program, can be solved 
to optimality, resulting in a corresponding value of approximately $838.493$ for 
problem ($CEx$).

To compute the optimal min-max objective obtained by using 
affine policies $q_k(\mb{w}^k)$ and incurring affine costs $z_k(\mb{w}^{k+1})$, 
one can ammend the model $(AARC)$ from Section \ref{sec:optim-affine-polic} 
by including constraints for the cumulative controls (see \citet{BenTal05} for details), 
and then using \eqref{eq:typical_constr} to rewrite the resulting model as a linear program. 
The optimal value of this program for counterexample ($CEx$) was approximately 
$876.057$, resulting in a gap of $4.4\%$, and thus providing a negative 
answer to questions (i) and (ii).

To investigate question (iii), we remark that the smallest objective achievable by using 
affine policies of the type $q_k(\mb{w}^k)$ can be found by solving 
another stochastic optimization problem, 
having as decision variables the affine coefficients $\{q_{k,t}\}_{0 \leq t < k \leq T}$, 
as well as (non-anticipatory) stage cost 
variables $z_k^{\mb{w}}$ 
for every time step $k \in \{1,\dots, T\}$ and every extreme point $\mb{w}$ 
of the uncertainty set. Solving 
the resulting linear program for instance $(CEx)$ gave an optimal value 
of $873.248$, so strictly larger than the (true) optimum ($838.493$), and strictly 
smaller than the optimal value of the model utilizing 
both affine control policies \emph{and} affine stage costs ($876.057$).

Thus, with question (iii) also answered negatively, we conclude that 
policies that are affine in the disturbances, $q_k(\mb{w}^k)$, are in general 
\emph{suboptimal} for problems 
with cumulative control constraints or multiple dimensions, and that replacing the 
convex state costs $h_k(x_{k+1})$ by (larger) affine costs $z_k(\mb{w}^{k+1})$
would, in general, result in even \emph{further} deterioration of the objective.

\section{An application in inventory management.}
\label{sec:invent_management_application}
In this section, we would like to explore our results in connection with a 
classical inventory problem. This idea was originally 
introduced by \citet{BenTal05}, in the context of a more general 
model: the retailer-supplier with flexible commitment contracts problem (RSFC). 
We will first describe a simplified version of the problem, and then 
draw a very interesting connection with our results. 

The setting is the following: consider a single-product,
single-echelon, multi-period supply chain, in which inventories 
are managed periodically over a planning horizon of $T$ periods. 
The unknown demands $w_t$ from customers arrive at the (unique) 
echelon, henceforth referred to as the \emph{retailer}, and are satisfied 
from the on-hand inventory, denoted by $x_t$ at the beginning 
of period $t$. The retailer can replenish the inventory by placing 
orders $u_t$, at the beginning of each period $t$, for 
a cost of $c_t$ per unit of product. These orders are 
immediately available, i.e. there is no lead-time in the system, but there 
are capacities on how much the retailer can order: $L_t \leq u_t \leq U_t$.
After the demand $w_t$ is realized, the retailer incurs holding 
costs $H_t \cdot \max\{0, x_t+u_t-w_t\}$ for all the 
amounts of supply stored on her premises, as well as penalties 
$B_t\cdot \max\{w_t-x_t-u_t, 0\}$, for any demand that is backlogged. 

In the spirit of robust optimization, we will assume that the only information
available about the demand at time $t$ is that it resides within a 
certain inverval centered around a \emph{nominal} (or mean) 
demand $\bar{d}_t$, which results in the uncertainty set
  $\W_t = \{ \, \abs{w_t-\bar{d}_t} \leq 
  \rho \cdot \bar{d}_t \, \}$,
where $\rho \in [0,1]$ can be interpreted as an \emph{uncertainty level}. 
As such, if we take the objective function to be minimized as the cost 
resulting in the worst-case scenario, we immediately obtain an instance 
of our original Problem \ref{prob:initial_problem}, with $\a_t= \b_t=1, 
\c_t=-1$, and the convex state costs $h_t(\cdot)$ denoting the Newsvendor 
cost, 
$h_t(x_{t+1}) = H_t \cdot \max\{x_t + u_t - w_t, 0\} + 
B_t \cdot \max\{ w_t - x_t - u_t, 0\}$.

Therefore, the results in Theorem \ref{thm:main_theorem} are immediately applicable to 
conclude that no loss of optimality is incurred when we restrict 
attention to affine order quantities $q_t$ that depend on the history of available 
demands at time $t$,
 $q_t(\mb{w}^t) = \aff{q}{t}{0} + \sum_{\tau=1}^{t-1}\aff{q}{t}{\tau}\cdot w_\tau$,
and when we replace the Newsvendor costs $h_t(x_{t+1})$ by some (potentially larger) 
affine costs $z_t(\mb{w}^{t+1})$. The main advantage is that, with these 
substitutions, the problem of finding the optimal affine policies becomes an LP 
(see the discussion in Section \ref{sec:optim-affine-polic} and the paper by 
\citet{BenTal05} for more details). 

The more interesting connection with our results comes if we recall the construction 
in Algorithm 1. In particular, we have the following simple claim:
\begin{proposition}
  \label{rmrk:decaying_memory_property}
  If the affine orders $q_t(\mb{w}^t)$ computed 
  in Algorithm 1 are implemented at every time step $t$, and we let:
  $x_k(\mb{w}^k) = x_1 + \sum_{t=1}^{k-1} \left( q_t(\mb{w}^t) - w_t \right)
  \bydef \aff{x}{t}{0} + \sum_{t=1}^{k-1} \aff{x}{k}{t} \cdot w_t$
  denote the affine dependency of the inventory $x_k$ on the history 
  of demands, $\mb{w}^k$, then:
  \begin{enumerate}
  \item If a certain demand $w_t$ is fully satisfied by time $k \geq t+1$, i.e. 
    $x_{k,t}=0$, then all the (affine) orders $q_{\tau}$ placed after time $k$ will not depend 
    on $w_t$.
  \item Every demand $w_t$ is at most satisfied by the future orders $q_k, \, k \geq t+1$,
    and the coefficient $q_{k,t}$ represents what fraction of the 
    demand $w_t$ is satisfied by the order $q_k$.
  \end{enumerate}
\end{proposition}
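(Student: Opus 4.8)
The plan is to run a \emph{forward} induction on $k$ that follows the single coefficient $\aff{x}{k}{t}$ of a fixed past demand $w_t$ in the inventory $x_k$, and to read off both claims from the sign and the size of the order coefficients produced by Algorithm 1.

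First I would record the coefficient recursion. Since the inventory evolves as $x_{k+1}=x_k+q_k-w_k$ and $q_k(\mb{w}^k)$ involves only $w_1,\dots,w_{k-1}$, matching the coefficients of $w_t$ gives $\aff{x}{t+1}{t}=-1$ and, for every $k\ge t+1$, $\aff{x}{k+1}{t}=\aff{x}{k}{t}+\aff{q}{k}{t}$, equivalently $\aff{x}{k}{t}=-1+\sum_{\tau=t+1}^{k-1}\aff{q}{\tau}{t}$. In this language, part (i) says ``$\aff{x}{k}{t}=0\ \Rightarrow\ \aff{q}{\tau}{t}=0$ for all $\tau\ge k$'', while part (ii) says ``$\aff{q}{k}{t}\ge 0$ and $\sum_{\tau\ge t+1}\aff{q}{\tau}{t}\le 1$'', i.e.\ $\aff{x}{k}{t}\in[-1,0]$ for all $k\ge t+1$.

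The engine of the proof is the coefficient bound guaranteed by Algorithm 1. At stage $k$ the algorithm builds $q_k(\mb{w}^k)$ from the zonogon $\Theta$ whose second coordinate is $\theta_2=x_k(\mb{w}^k)$, the state \emph{before} the order $q_k$; hence the generator $b_s$ of $\Theta$ is precisely the coefficient of $w_s$ in $x_k$. Whether Algorithm 1 returns $q_k=u^*(\theta_2(\cdot))$ (the three trivial cases, where property \textbf{P3} of Section \ref{sec:DP_formulation} yields $q_k(\mb{w})-q_k(\mb{w}')=-f\,(\theta_2(\mb{w})-\theta_2(\mb{w}'))$ with $f\in[0,1]$) or the solution of system $(S)$ (case \textbf{[C4]}, handled by Lemma \ref{lem:affine_control_coefficients_robust}(i)), one obtains in every case the bound $-b_s\le (\text{order coefficient})\le 0$ in the DP-normalized coordinates. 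Translating to the inventory model, whose dynamics carry $\c_k=-1$: the normalizing change of variables of Section \ref{sec:DP_formulation} replaces $w_s$ by $-w_s$, which flips the sign of each order coefficient and makes $b_s=-\aff{x}{k}{s}$; therefore, \emph{provided} $\aff{x}{k}{s}\le 0$, one gets $0\le \aff{q}{k}{s}\le -\aff{x}{k}{s}$ for all $s\in\{1,\dots,k-1\}$.

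With this bound in hand, the induction closes in two lines. Base case $k=t+1$: $\aff{x}{t+1}{t}=-1\in[-1,0]$. Inductive step: if $\aff{x}{k}{t}\in[-1,0]$, the bound above applies and gives $\aff{q}{k}{t}\in[0,-\aff{x}{k}{t}]$, so $\aff{x}{k+1}{t}=\aff{x}{k}{t}+\aff{q}{k}{t}\in[\aff{x}{k}{t},0]\subseteq[-1,0]$; this proves part (ii). For part (i), $\aff{x}{k}{t}=0$ forces $\aff{q}{k}{t}\in[0,0]$, hence $\aff{x}{k+1}{t}=0$, and iterating yields $\aff{q}{\tau}{t}=0$ and $\aff{x}{\tau}{t}=0$ for all $\tau\ge k$. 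The only delicate point — and the main thing to get right — is the bookkeeping: confirming that the generator of $\Theta$ at stage $k$ is indeed the coefficient of $w_t$ in $x_k$, and tracking the sign flip induced by $\c_k=-1$ so that the ``$\le 0$'' of Lemma \ref{lem:affine_control_coefficients_robust}(i) becomes the ``$\ge 0$'' needed here in the original demand variables. Once the conventions are pinned down, nothing further is required.
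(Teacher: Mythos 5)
Your proposal is correct and follows essentially the same route as the paper: both rest on the bound $0 \leq \aff{q}{k}{t} \leq -\aff{x}{k}{t}$ obtained from part (i) of Lemma \ref{lem:affine_control_coefficients_robust} after accounting for the sign flip induced by $\c_t=-1$, followed by a forward induction on $k$ starting from $\aff{x}{t+1}{t}=-1$. Your version is slightly more careful than the paper's in two respects — it explicitly notes that the coefficient bound also holds in the trivial cases \textbf{[C1]}--\textbf{[C3]} via property \textbf{P3}, and it makes explicit that the hypothesis $\aff{x}{k}{t}\le 0$ needed to invoke the lemma is itself maintained by the induction — but these are refinements of the same argument, not a different one.
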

\begin{proof}
  To prove the first claim, recall that, in our notation from Section \ref{sec:simplified-notation}, 
  $x_k \equiv \theta_2 = b_0 + \sum_{t=1}^{k-1} b_t \cdot w_t$. 
  Applying part $(i)$ of Lemma \ref{lem:affine_control_coefficients_robust} in the 
  current setting\footnote{The signs of the inequalities are changed 
    because every disturbance, $w_t$, is entering the system dynamics with a coefficient $-1$,
    instead of $+1$, as was the case in the discussion from Section \ref{sec:simplified-notation}.}, 
  we have that $0 \leq q_{k,t} \leq -x_{k,t}$. Therefore, if $x_{k,t}=0$, then $q_{k,t}=0$, which 
  implies that $x_{k+1,t}=0$. By induction, we immediately get that 
  $q_{\tau,t}=0, \, \forall \, \tau \in \{k,\dots,T\}$.

  To prove the second part, note that any given demand, $w_t$, initially has an 
  affine coefficient of $-1$ in the state $x_{t+1}$, i.e. $x_{t+1,t}=-1$. By 
  part $(i)$ of Lemma \ref{lem:affine_control_coefficients_robust}, 
  $0 \leq q_{t+1,t} \leq -x_{t+1,t} = 1$, so that $q_{t+1,t}$ represents a 
  fraction of the demand $w_t$ satisfied by the order $q_{t+1}$. Furthermore,
  $x_{t+2,t}=x_{t+1,t}+q_{t+1,t} \in [-1,0]$, so, by induction, we immediately have 
  that $q_{k,t} \in [0,1], \, \forall\, k \geq t+1$, and 
  $\sum_{k=t+1}^{T} q_{k,t} \leq 1$.
\end{proof}

In view of this result, if we think of 
$\{q_k\}_{k \geq t+1}$ as future orders that are partially 
satisfying the demand $w_t$, then
every future order quantity $q_k(\mb{w}^k)$ will satisfy exactly a fraction of 
the demand $w_t$ (since the coefficient for $w_t$ in $q_k$ 
will always be in $[0,1]$), and every demand will be at most 
satisfied by the sequence of orders following after it appears. 
This interpretation bears some similarity with 
the unit decomposition approach of \cite{TsiMuh03}, where every unit 
of supply can be interpreted as satisfying a particular unit of the demand.
Here, we are accounting for fractions of the total demand, as being 
satisfied by future order quantities.

\section{Conclusions. Future Directions.}
\label{sec:conclusions_future_direct}
We have presented a novel approach for theoretically handling robust, 
multi-stage decision problems. The method strongly utilized the connections between the 
geometrical properties of the feasible sets (zonogons), and the objective 
functions being optimized, in order to prune the set of relevant 
points and derive properties about the optimal policies for the problem. 
We have also shown an interesting implication of our theoretical results 
in the context of a classical problem in inventory management. 

On a theoretical level, one immediate direction of future research would be to study 
systems with mixed (polyhedral) constraints, on both 
state and control at time $t$. Furthermore, we would like to explore 
the possibility of utilizing the same proof technique 
in the context of multi-dimensional problems, as well as for more 
complicated uncertainty sets $\W$. 

Second, we would like to better understand the connections between the 
matching performed in Algorithm 2 
and the properties 
of convex (and supermodular) functions, and explore extensions of the 
approach to handle cost functions that are 
not necessarily convex, as well as non-linear cost structures for the control 
$u_t$. Another potential area of interest would be to use our analysis 
tools to quantify the performance of affine 
policies even in problems where they are known to be suboptimal (such as the one 
suggested in Section \ref{sec:counter_examples}). This could 
potentially lead to fast approximation algorithms, with solid theoretical 
foundations.

On a practical level, we would like to explore potential applications 
arising in robust portfolio optimization, as well as operations management.
Also, we would like to construct a procedure 
that mimics the behavior of our algorithms, but does not require 
knowledge of the optimal value functions $J^*(\cdot)$ or optimal controllers $u^*(\cdot)$.
One potential idea would be to explore which types of cuts could be added 
to the linear program ($AARC$), to ensure that it computes a solution
as ``close'' to the affine controller $q(\mb{w})$ as possible.

\section{Appendix.}
\label{sec:appendix}

\subsection{Dynamic Programming Solution.}
\label{sec:appendix:dynam-progr-solution}
This section contains a detailed proof for the solution of the Dynamic 
Programming formulation, initially introduced in 
Section \ref{sec:DP_formulation}. Recall that the problem we would like to solve 
is the following:
\begin{align}
  J_{mM} \bydef \quad &\underset{u_1}{\min} \left[ c_1 \cdot u_1 + 
    \underset{w_1}{\max} \left[ 
        h_1(x_2) + \dots + \underset{u_k}{\min} \left[ c_k \cdot u_k + 
          \underset{w_k}{\max} \left[ h_k(x_{k+1}) + 
            \dots \right. \right. \right. \right. \nonumber \\
 (DP) \qquad \qquad \qquad & \qquad \qquad \qquad \qquad  \left. \left. \left.
          + \underset{u_{T}}{\min} \left( c_T \cdot u_T + \underset{w_{T}}{\max}
            ~ h_T(x_{T+1}) ~\right) \dots \right] \dots \right] ~\right] \nonumber \\
  \text{s.t.} \quad &  x_{k+1} = x_k + u_k + w_k \nonumber \\  
  & L_k \leq u_k \leq U_k 
  \qquad \qquad \qquad \forall \, k \in \{1,2,\dots,T\}, \nonumber \\
  & w_k \in \W_k = [\vlow{w}{k},\vup{w}{k}] \nonumber 
\end{align}
which gives rise to the corresponding Bellman recursion:
\begin{align*}
  J_{k}^*(x_k) &\bydef \underset{L_k \leq u_k \leq U_k}{\min} \left[~
    c_k \cdot u_k + \underset{w_k \in \W_k}{\max} \left[~ h_k(x_k+u_k+w_k)
      + J_{k+1}^*\left(x_k+u_k+w_k\right)  ~\right] ~\right].
\end{align*}
According to our definition of running cost and cost-to-go, the cost at 
$T+1$ is $J_{T+1}^*=0$, which yields the following Bellman recursion at 
time $T$:
\begin{align*}
  J_{T}^*(x_T) & \bydef 
  \underset{L_T \leq u_T \leq U_T}{\min} \left[~ c_T \cdot u_T + 
  ~ \underset{w_T \in \W_T}{\max} ~ h_T(x_T + u_T + w_T) \right].
\end{align*}

First consider the inner (maximization) problem. Letting 
$y_T \bydef x_T + u_T$, we obtain:
\begin{align}
  g_T(y_T) &\bydef \underset{w_T \in [\vlow{w}{T},\vup{w}{T}]}{\max} 
  ~ h_T(x_T + u_T + w_T) \nonumber \\
  \text{(since $h_T(\cdot)$ convex)} \quad &=
  \max \left\{ h_T \left(y_T + \vlow{w}{T} \right), 
    h_T\left(y_T + \vup{w}{T} \right) \right\}  \label{eq:apdx:DP:gT_definition}
\end{align}
Note that $g_T$ is the maximum of two convex, coercive functions of $y_T$, 
hence it is also convex and coercive (see Theorem 5.5 in \cite{Rock70} or 
Proposition 1.2.4 in \cite{Bkas03} for details). 
The outer (minimization) problem at time $T$ becomes:
\begin{align}
  J_T^*(x_T) &= \underset{L_T \leq u_T(\cdot) \leq U_T}{\min} ~
  c_T \cdot u_T + g_T(x_T+u_T) \nonumber \\
  &= -c_T \cdot x_T + \underset{L_T \leq u_T(\cdot) \leq U_T}{\min} 
  \left[~ c_T \cdot (x_T+u_T) +  g_T(x_T+u_T) ~\right] \nonumber
\end{align}
For any $x_T$, $c_T \cdot (x_T+u_T) + g_T(x_T + u_T)$ is 
a convex function of its argument $y_T = x_T+u_T$. 
As such, by defining $\minofgt{t}$ to be the compact set of
minimizers of the convex and coercive function $c_T \cdot y + g_T(y)$,
we obtain that the optimal controller and optimal value function at time $T$ will be:
\begin{align}
  u_T^*(x_T) &=
  \begin{cases}
    U_T, & ~\text{if}~  x_T < \vlow{y}{T} - U_T\\
    -x_T + y^*, & ~\text{otherwise} \\
    L_T, &  ~\text{if}~ x_T > \vup{y}{T} - L_T
  \end{cases} \label{eq:apdx:DP:uT_star} \\
  J_T^*(x_T) &= 
  \begin{cases}
    c_T \cdot U_T + g_T(x_T+U_T), & ~\text{if}~  x_T < \vlow{y}{T} - U_T\\
    c_T\cdot(y^*- x_T) + g_T(y^*), & ~\text{otherwise} \\
    c_T \cdot L_T + g_T(x_T+L_T), &  ~\text{if}~ x_T > \vup{y}{T} - L_T
  \end{cases} \label{eq:apdx:DP:JT_star} \\
  \text{where}~ y^* &\in \minofgt{T}. \nonumber
\end{align}
The following properties are immediately obvious:
\begin{enumerate}
\item $u_T^*(x_T)$ is piecewise affine (with at most 3 pieces), continuous, 
  monotonically decreasing in $x_T$. 
\item $J_T^*(x_T)$ is convex, since it represents 
  a partial minimization of a convex function with respect 
  to one of the variables (see Proposition 2.3.6 in \cite{Bkas03}).
\end{enumerate}
The results can be immediately extended by induction on $k$:
\begin{lemma}
  \label{sec:DP:lemma_piecewise_uJ}
  The optimal control policy $u_k^*(x_k)$ is piecewise affine, with at most 3 
  pieces, continuous, and monotonically decreasing in $x_k$. The optimal objective 
  function $J_k^*(x_k)$ is convex in $x_t$.
\end{lemma}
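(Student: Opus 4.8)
The plan is to prove the lemma by backward induction on $k$, the base case $k=T$ having just been settled via \eqref{eq:apdx:DP:uT_star}--\eqref{eq:apdx:DP:JT_star}. It is convenient to carry a slightly stronger hypothesis than stated, namely that $J_{k+1}^*(\cdot)$ is convex \emph{and coercive}; coercivity is what guarantees that every maximum and minimum below is attained on a compact set, and in particular that the minimizer set $\minofgt{k}$ appearing in \eqref{eq:apdx:DP:uT_star} is a well-defined nonempty compact interval.

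The first step handles the inner maximization. Assuming $J_{k+1}^*$ is convex and coercive, $h_k + J_{k+1}^*$ is convex and coercive as a sum of such functions, so for fixed $y_k$ the map $w_k \mapsto (h_k+J_{k+1}^*)(y_k+w_k)$ is convex on the compact interval $\W_k$ and its maximum is attained at an endpoint:
\begin{equation*}
  g_k(y_k) = \max\bigl\{(h_k+J_{k+1}^*)(y_k+\vlow{w}{k}),\ (h_k+J_{k+1}^*)(y_k+\vup{w}{k})\bigr\}.
\end{equation*}
Being the pointwise maximum of two translates of a convex coercive function, $g_k$ is itself convex and coercive (Theorem 5.5 in \cite{Rock70}, or Proposition 1.2.4 in \cite{Bkas03}); this is exactly the analogue of \eqref{eq:apdx:DP:gT_definition}.

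Next I would analyze the outer minimization. Writing $J_k^*(x_k) = -c_k x_k + \min_{L_k \le u_k \le U_k}\phi_k(x_k+u_k)$ with $\phi_k(y)\bydef c_k y + g_k(y)$, the function $\phi_k$ is convex and coercive since $c_k\ge 0$, so its set of minimizers is a nonempty compact interval which I write as $\minofgt{k}$. Substituting $y_k = x_k+u_k$, the inner problem minimizes the convex $\phi_k$ over the window $[x_k+L_k,\,x_k+U_k]$, and its solution is the projection of $\minofgt{k}$ onto that window. This produces precisely three regimes --- window left of $\minofgt{k}$ (i.e. $x_k<\vlow{y}{k}-U_k$), window meeting $\minofgt{k}$, window right of $\minofgt{k}$ (i.e. $x_k>\vup{y}{k}-L_k$) --- giving $u_k^* = U_k$, $u_k^* = y^*-x_k$ for any $y^*\in\minofgt{k}$ in the window, or $u_k^* = L_k$ respectively, i.e. the formula \eqref{eq:apdx:DP:uT_star} with $T$ replaced by $k$. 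The middle piece is affine with slope $-1$ and the outer pieces are constant; checking that the pieces agree at the breakpoints $x_k = \vlow{y}{k}-U_k$ (common value $U_k$) and $x_k = \vup{y}{k}-L_k$ (common value $L_k$) shows $u_k^*$ is continuous and non-increasing with at most three affine pieces. For $J_k^*$ itself I would note that $(x_k,u_k)\mapsto c_k u_k + g_k(x_k+u_k)$ is jointly convex --- affine in $u_k$ plus $g_k$ composed with the linear map $(x_k,u_k)\mapsto x_k+u_k$ --- and that $J_k^*$ is its partial minimization over $u_k\in[L_k,U_k]$, an operation that preserves convexity (Proposition 2.3.6 in \cite{Bkas03}). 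Coercivity of $J_k^*$ follows because for $|x_k|$ large the window no longer meets $\minofgt{k}$, so $u_k^*$ is a fixed constant and $x_k+u_k^*(x_k)\to\pm\infty$, whence $g_k(x_k+u_k^*(x_k))\to+\infty$. This re-establishes the strengthened hypothesis at level $k$ and closes the induction.

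I do not expect a genuine obstacle here: this is the standard argument for constrained scalar convex Bellman recursions. The only points needing care are the coercivity bookkeeping --- without it $\minofgt{k}$ need not be compact or even nonempty, and the projection characterization of the constrained minimizer could fail --- and the routine verification that the three pieces of $u_k^*$ match at the two breakpoints, which is what upgrades ``piecewise affine with three pieces'' to ``continuous''.
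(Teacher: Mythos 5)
Your proof is correct and follows essentially the same route as the paper's: backward induction from $k=T$, endpoint attainment of the inner maximum by convexity, the window-projection characterization of the constrained minimizer giving the three-piece structure, and convexity of $J_k^*$ via partial minimization of a jointly convex function. Your explicit propagation of coercivity through the induction is a welcome tightening of a point the paper leaves implicit (it verifies coercivity only at $k=T$), but it does not change the argument.
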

\begin{proof}
  The induction is checked at $k=T$. Assume the property is true at $k+1$. Letting
  $y_k \bydef x_k + u_k$, the Bellman recursion at $k$ becomes:
  \begin{align*}
    J_{k}^*(x_k) &\bydef \underset{L_k \leq u_k \leq U_k}{\min} \left[~
      c_k \cdot u_k + g_k\left(x_k+u_k\right) ~\right] \\ 
    g_k\left(y_k\right) &\bydef \underset{w_k \in \W_k}{\max} \left[~ h_k(y_k+w_k)
      + J_{k+1}^*\left(y_k+w_k\right)  ~\right].  
  \end{align*}

  Consider first the maximization problem. Since $h_k$ is convex, and
  (by the induction hypothesis) $J_{k+1}^*$ is also convex, 
  the maximum will be reached on the boundary of 
  $\W_{k}= \left[ \vlow{w}{k},\vup{w}{k}\right]$:
  \begin{align}
    g_k\left(y_k\right) = \max \left\{~ 
      h_k (y_k + \vlow{w}{k}) + J_{k+1}^*\left(y_k + \vlow{w}{k}\right) , \quad
      h_k(y_k + \vup{w}{k}) + J_{k+1}^*\left(y_k + \vup{w}{k}\right) ~\right\} 
    \label{eq:apdx:DP:gk_value}
  \end{align}
  and $g_k(y_k)$ will be also be convex.
  The minimization problem becomes:
  \begin{align}
    J_{k}^*(x_{k}) &= \underset{L_k \leq u_{k} \leq U_k}{\min} ~ \left[ ~
      c_{k} \cdot u_{k} + g_k\left(x_k+u_k\right) ~\right] \nonumber \\
    &= -c_k \cdot x_k + \underset{L_k \leq u_{k} \leq U_k}{\min} ~ \left[ ~
      c_{k} \cdot (x_k+u_{k}) + g_k\left(x_k+u_k\right) ~\right]
  \end{align}
  Defining, as before, $\minofgt{k}$ as the set of minimizers of 
  $c_k \cdot y + g_k(y)$, we get:
  \begin{align}
    u_k^*(x_k) &=
    \begin{cases}
      U_k, & ~\text{if}~  x_k < \vlow{y}{k} - U_k\\
      -x_k + y^*, & ~\text{otherwise} \\
      L_k, &  ~\text{if}~ x_k > \vup{y}{k} - L_k
    \end{cases} \label{eq:apdx:DP:uk_star} \\
    J_k^*(x_k) &=  
    \begin{cases}
      c_k \cdot U_k + g_k(x_k+U_k), & ~\text{if}~  x_k < \vlow{y}{k} - U_k\\
      c_k\cdot (y^* - x_k) + g_k(y^*), & ~\text{otherwise} \\
      c_k \cdot L_k + g_k(x_k+L_k), &  ~\text{if}~ x_k > \vup{y}{k} - L_k
    \end{cases} \label{eq:apdx:DP:Jk_star} \\
    \text{where}~ y^* &\in \minofgt{k}. \nonumber
  \end{align}
  In particular, $u_k^*$ will be piecewise affine with 3 pieces, continuous, 
  monotonically decreasing, and $J_k^*$ will be convex (as the partial 
  minimization of a convex function with respect to one of the variables).
  A typical example of the optimal control law and the optimal value function 
  is shown in Figure \ref{fig:uk_Jk_piecewise} of Section \ref{sec:DP_formulation}. 
\end{proof}

\subsection{Zonotopes and Zonogons.}
\label{sec:zonotope-properties}
In this section of the Appendix, we would like to outline several useful properties of 
the main geometrical objects of interest in our exposition, namely \emph{zonotopes}.
The presentation here parallels that in Chapter 7 of \cite{Ziegl03}, to which the 
interested reader is referred to for a much more comprehensive treatment.

\emph{Zonotopes} are special polytopes that can be viewed in various ways:
as projections of cubes, as Minkowski sums of line segments, and as sets of 
bounded linear combinations of vector configurations. Each description gives 
a different insight into the combinatorics of zonotopes, and there exist some 
very interesting results that unify the different descriptions under a common 
theory. For our purposes, it will be sufficient to understand zonotopes under the 
first two descriptions. In particular, letting $\H_k$ denote the $k$-dimensional 
hypercube, $\H_k = \{\mb{w} \in \R^k \,:\, 0 \leq w_i \leq 1, \, \forall \, i\}$, 
we can introduce the following definition:
\begin{definition} [\textbf{7.13} in \cite{Ziegl03}]
  \label{def:zonotope}
  A \textbf{zonotope} is the image of a cube under an affine projection,
  that is, a $d$-polytope $Z \subseteq \R^d$ of the form
  \begin{align*}
    Z = Z(V) &:= V \cdot \H_k + \mb{z} = \{V \mb{w} + \mb{z} ~:~ 
    \mb{w} \in \H_k\} \\
    &= \{ \mb{x} \in \R^d ~:~ \mb{x}=\mb{z} + \sum_{i=1}^k w_i \mb{v}_i,
    ~0 \leq y_i \leq 1\}
  \end{align*}
  for some matrix (vector configuration) $V=(\mb{v}_1,\dots,\mb{v}_k) \in 
  \R^{d \times k}$.
\end{definition}
The rows of the matrix $V$ are often referred to as the \emph{generators} 
defining the zonotope. An equivalent description of the zonotope can be 
obtained by recalling that every $k$-cube $\H_k$ is a product of line segments 
$\H_k = \H_1 \times \dots \times \H_1$. Since for a linear operator $\pi$ 
we always have:
$\pi(\H_1 \times \dots \times \H_1) 
  = \pi(\H_1) + \dots + \pi(\H_1)$,
by considering an affine map given by $\pi(\mb{w}) = V \mb{w} + \mb{z}$,
it is easy to see that every zonotope is the Minkowski
sum of a set of line segments:
\begin{align*}
  Z(V) = [0,\mb{v}_1] + \dots + [0,\mb{v}_p] + \mb{z}.
\end{align*}

For completeness, we remark that there is no loss of generality in 
regarding a zonotope as a projection from the unit hypercube $\H_k$, since 
any projection from an arbitrary hyper-rectangle in $\R^k$ can always be seen 
as a projection from the unit hypercube in $\R^k$. To see this, consider an arbitrary 
hyper-rectangle in $\R^k$:
\begin{align*}
  \W_k = [\vlow{w}{1},\vup{w}{1}] \times [\vlow{w}{2},\vup{w}{2}] \times 
  \dots \times [\vlow{w}{k},\vup{w}{k}],
\end{align*}
and note that, with $V \in \R^{d \times k}$, and $\mb{a}^T \in \R^k$ denoting 
the $j$-th row of $V$, the $j$-th component of 
$Z(V) \bydef V \cdot \W_k + \mb{z}$ can be written:
\begin{align*}
  Z(V)_j \bydef  z_j + \sum_{i=1}^k \left( a_i \cdot w_i \right) 
  = \left( z_j + \sum_{i=1}^k a_i \cdot \vlow{w}{i} \right) + 
  \sum_{i=1}^k a_i \cdot \left( \vup{w}{i} - \vlow{w}{i} \right) \cdot y_i,
  ~\text{where}~ y_i \in [0,1], \, \forall \, 1 \leq i \leq k.
\end{align*}

An example of a subclass of zonotopes are the \emph{zonogons}, which 
are all centrally symmetric, 2-dimensional $2p$-gons, arising as the 
projection of $p$-cubes to the plane. An example is shown 
in Figure \ref{fig:zonotope_R6} of Section \ref{sec:induct_hypo}.
These are the main objects of 
interest in our treatment, and the following lemma summarizes 
their most important properties: 
\begin{lemma}
  \label{lem:zonotope_properties}
  Let $\H_k = [0,1]^k$ be a $k$-dimensional hypercube, $k \geq 2$. 
  For fixed $\mb{a},\mb{b} \in \R^k$ and $a_0,b_0 \in \R$,
  consider the affine transformation $\pi : \R^k \rightarrow \R^2,~
  \pi(\mb{w}) = \left[ \begin{array}{c}
      \mb{a}^T \\ \mb{b}^T
    \end{array} \right] \cdot \mb{w} + \left[ \begin{array}{c}
      a_0 \\ b_0 \end{array} \right]$ and the zonogon $\Theta \subset \R^2$:
  \begin{align*}
    \Theta &= \pi\left(\H_k\right)
    \bydef \left\{ 
      \mb{\theta} \in \R^2 : \exists \, \mb{w} \in \H_k ~\text{s.t.}~
      \mb{\theta} = \pi(\mb{w}) \right\}.
  \end{align*}
  If we let $\V_\Theta$ denote the set of vertices of $\Theta$, then 
  the following properties are true:
  \begin{enumerate}
  \item $\exists \, \mb{O} \in \Theta$ such that $\Theta$ is symmetric around 
    $\mb{O} ~:~ \forall \, \mb{x} \in \Theta \Rightarrow 
    2\mb{O}-\mb{x} \in \Theta$.
  \item $\left| \V_\Theta \right| = 2p \leq 2k$ 
    vertices. Also, $p < k$ if and only if $\exists \, i \neq j \in \{1,\dots,k\} 
    ~\text{such that}~ \rank \left(  \left[ \begin{array}{cc}
          a_i & a_j \\ b_i & b_j
        \end{array} \right]
    \right)<2$.
  \item If we number the vertices of $\V_\Theta$
    in cyclic order:
    \begin{align*}
      \V_\Theta = \left( \mb{v}_0,\dots,\mb{v}_i,\mb{v}_{i+1},\dots
        ,\mb{v}_{2p-1} \right)  \quad (\mb{v}_{2p+i} \bydef \mb{v}_{(2p+i)\mod{(2p)} })
    \end{align*}
    then
    $2 \mb{O} - \mb{v}_i=\mb{v}_{i+p}$,    and we have 
    the following representation for $\Theta$ as a Minkowski sum of line segments: 
    \begin{align*}
      \Theta &= \mb{O} + \left[ - \frac{\mb{v}_1-\mb{v}_0}{2}, 
        \frac{\mb{v}_1-\mb{v}_0}{2} \right] + \dots + \left[ 
        - \frac{\mb{v}_{p}-\mb{v}_{p-1}}{2}, \frac{\mb{v}_{p}-\mb{v}_{p-1}}{2} \right] \\
      &\bydef \mb{O} + \sum_{i=1}^{p} \lambda_i \cdot \frac{\mb{v}_{i}-\mb{v}_{i-1}}{2},
      \quad -1 \leq \lambda_i \leq 1.
    \end{align*}
  \item If $\exists \, \mb{w}_1,\mb{w}_2 \in \H_k$ such that 
    $\mb{v}_1\bydef \pi(\mb{w}_1) = \mb{v}_2\bydef \pi(\mb{w}_2)$ 
    and $\mb{v}_{1,2} \in \V_\Theta$, then $\exists \, j \in \{1,\dots,k\}$ 
    such that $a_j=b_j=0$.
  \item With the same numbering from $(iii)$ and $k=p$, for any $i \in \{0,\dots,
    2p-1\}$, the vertices of the hypercube that are projecting to 
    $\mb{v}_i$ and $\mb{v}_{i+1}$, respectively, are adjacent, i.e. 
    they only differ in exactly one component.
  \end{enumerate}
\end{lemma}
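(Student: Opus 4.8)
The plan is to handle the five parts in sequence, using the two standard descriptions of a zonogon --- as an affine image of the cube $\H_k$ and as a Minkowski sum of planar segments --- together with elementary facts on two-dimensional zonotopes (Chapter 7 of \cite{Ziegl03}). Throughout, write $A$ for the $2\times k$ matrix with rows $\mb{a}^T,\mb{b}^T$ and columns $\mb{g}_i\bydef(a_i,b_i)^T$, so that $\pi(\mb{w})=A\mb{w}+(a_0,b_0)^T$ and $\Theta=(a_0,b_0)^T+\sum_{i=1}^{k}[0,\mb{g}_i]$. Part $(i)$ is then immediate: $\H_k$ is centrally symmetric about its barycenter $\mb{c}=(\tfrac12,\dots,\tfrac12)$, and applying $\pi$ shows $\Theta$ is centrally symmetric about $\mb{O}\bydef\pi(\mb{c})$, since $\mb{x}=\pi(\mb{w})\in\Theta$ forces $2\mb{O}-\mb{x}=\pi(2\mb{c}-\mb{w})\in\Theta$.

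For part $(ii)$, sort the nonzero generators $\mb{g}_i$ by the angle of their spanning lines; by the standard structure of a planar Minkowski sum of segments, each distinct line direction among the $\mb{g}_i$ contributes exactly one edge to each of the two boundary arcs joining the two $\Theta$-vertices extreme in that direction, so $|\V_\Theta|$ equals twice the number of distinct directions among the $\mb{g}_i$. This is $2k$ exactly when all $\mb{g}_i$ are nonzero and pairwise non-collinear, i.e. when every pair of columns $\mb{g}_i,\mb{g}_j$ is linearly independent; a vanishing or repeated direction (including $\mb{g}_i=0$) collapses edges and forces $p<k$, which gives the stated rank characterization. For part $(iii)$, central symmetry from $(i)$ makes $\mb{x}\mapsto2\mb{O}-\mb{x}$ an automorphism of the boundary cycle $(\mb{v}_0,\dots,\mb{v}_{2p-1})$; being an orientation-preserving involution with no fixed vertex (no vertex equals $\mb{O}$), it is the antipodal shift $\mb{v}_i\mapsto\mb{v}_{i+p}$, so $2\mb{O}-\mb{v}_i=\mb{v}_{i+p}$ and the edge vectors $\mb{v}_i-\mb{v}_{i-1}$ satisfy $\mb{v}_{i+p}-\mb{v}_{i+p-1}=-(\mb{v}_i-\mb{v}_{i-1})$. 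Since the $2p$ boundary edge vectors are precisely the $\pm$-copies of the $p$ distinct generator directions, the first $p$ of them form one representative of each direction, so $\Theta$ --- the translate of the Minkowski sum of those edges --- equals $\mb{O}+\sum_{i=1}^{p}\bigl[-\tfrac12(\mb{v}_i-\mb{v}_{i-1}),\ \tfrac12(\mb{v}_i-\mb{v}_{i-1})\bigr]$, with the centering at $\mb{O}$ checked by computing the barycenter.

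The crux is part $(iv)$. A point $\mb{v}\in\Theta$ is a vertex iff its normal cone $N_{\mb{v}}$ is two-dimensional, so pick $c$ in the interior of $N_{\mb{v}}$: then $\mb{v}$ is the unique maximizer of $c^T\theta$ on $\Theta$, and $\pi^{-1}(\mb{v})\cap\H_k$ is the face of $\H_k$ on which $(c^TA)\mb{w}$ is maximal --- the one obtained by fixing coordinate $i$ whenever $c^T\mb{g}_i\ne0$ and leaving it free whenever $c^T\mb{g}_i=0$. If two distinct cube points project to $\mb{v}$, this face has positive dimension, so some coordinate $j$ is free, i.e. $c^T\mb{g}_j=0$ for every $c$ in the open cone $\operatorname{int}N_{\mb{v}}$. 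But a nonzero $\mb{g}_j$ is annihilated only by $c$ lying on a single line through the origin, which cannot contain an open planar cone; hence $\mb{g}_j=0$, i.e. $a_j=b_j=0$.

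Finally, for part $(v)$, when $p=k$ part $(ii)$ gives that all $\mb{g}_i$ are nonzero and pairwise non-collinear, so by part $(iv)$ each vertex of $\Theta$ has a unique cube vertex as preimage, and each boundary edge $[\mb{v}_i,\mb{v}_{i+1}]$ --- being parallel to a single generator $\mb{g}_{\sigma(i)}$ --- is the image of a $1$-face of $\H_k$, i.e. a cube edge, whose two endpoints differ only in coordinate $\sigma(i)$; these endpoints are exactly the preimages of $\mb{v}_i$ and $\mb{v}_{i+1}$, which are therefore adjacent. I expect the genericity step in part $(iv)$ --- that the free coordinates of the preimage face correspond precisely to the vanishing generators --- to be the main obstacle, with the bookkeeping in $(ii)$--$(iii)$ relating sorted generators to the boundary edge cycle a close second; parts $(i)$ and $(v)$ are then short.
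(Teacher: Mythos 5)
Your proof is correct, and for the two substantive parts it takes a genuinely different route from the paper. The paper argues parts $(ii)$ and $(iv)$ by induction on $k$, splitting $\H_k$ into the two facets $\{w_k=0\}$ and $\{w_k=1\}$, observing that $\Theta$ is the convex hull of two translates of a $(k-1)$-dimensional zonogon (translated by $(a_k,b_k)^T$), and tracking how many new vertices the convex hull can create and when the translation degenerates. You instead argue directly from the Minkowski-sum structure: for $(ii)$ you count boundary edges via the distinct directions of the nonzero generators, and for $(iv)$ you identify $\pi^{-1}(\mb{v})\cap\H_k$ with the face of the cube maximizing $(c^TA)\mb{w}$ for $c$ in the interior of the normal cone at $\mb{v}$, so that a positive-dimensional preimage forces some generator to be annihilated by an open planar cone of functionals and hence to vanish. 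The normal-cone argument is arguably cleaner and more rigorous than the paper's sketch (the paper explicitly omits a complete proof), and it localizes exactly where degeneracy enters; the paper's inductive decomposition is more elementary and meshes with how it proves the vertex count. Parts $(i)$, $(iii)$, $(v)$ are handled in essentially the same way in both. One cosmetic caveat: your formula ``$|\V_\Theta|$ equals twice the number of distinct directions'' silently assumes $\Theta$ is genuinely two-dimensional (at least two independent generators), but the lemma's framing as a $2p$-gon makes the same implicit assumption, so nothing is lost.
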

\begin{proof}
  We will omit a complete proof of the lemma, and will instead simply 
  suggest the main ideas needed for checking the validity of the statements.
  
  For part $(i)$, it is easy to argue that the center of the hypercube, 
  $\mb{O}_{\H}=[1/2, 1/2, \dots, 1/2]^T$, will always project into the center of 
  the zonogon, i.e. $\mb{O} = \pi\left(\mb{O}_{\H}\right)$. This implies that 
  any zonogon will be centrally symmetric, and will therefore have 
  an even number of vertices. 
  
  Part $(ii)$ can be shown by induction on the dimension $k$ of the hypercube, 
  $\H_k$. For instance, to prove the first claim, note that 
  the projection of a polytope is simply the 
  convex hull of the projections of the vertices, and therefore 
  projecting a hypercube of dimension $k$ simply amounts to 
  projecting two hypercubes of dimension 
  $k-1$, one for $w_k=0$ and another for $w_k=1$, and then taking the 
  convex hull of the two resulting polytopes. It is easy 
  to see that these two polytopes in $\R^2$ are themselves zonogons, 
  and are translated copies of each other (by an amount $[a_k,b_k]^T$).
  Therefore, by the induction hypothesis, they have at most $2(k-1)$ 
  vertices, and taking their convex hull introduces at most two new 
  vertices, for a total of at most $2(k-1)+2=2k$ vertices. The second claim 
  can be proved in a similar fashion.
 
  One way to prove part $(iii)$ is also by induction on $p$, by taking any pair of 
  opposite (i.e. parallel, of the same length) edges and 
  showing that they correspond to a Minkowski summand of the zonogon.
  
  Part $(iv)$ also follows by induction. Using the same argument as 
  for part $(ii)$, note that the only ways to have two distinct vertices 
  of the hypercube $\H_k$ (of dimension $k$) project onto the same vertex of 
  the zonogon $\Theta$ is to either have this situation happen for one of the two $k-1$ 
  dimensional hypercubes (in which case the induction hypothesis would complete
  the proof), or to have zero translation between the two 
  zonogons, which could only happen if $a_k=b_k=0$.
  
  Part $(v)$ follows by using parts $(iii)$ and $(iv)$ and the definition of a zonogon as 
  the Minkowski sum of line segments. In particular, since the difference between 
  two consecutive vertices of the zonogon, $\mb{v}_i, \mb{v}_{i+1}$, for 
  the case $k=p$, is always given 
  by a single column of the projection matrix (i.e. $[a_j,b_j]^T$, for some $j$), 
  then the unique vertices of $\H_k$ that were projecting onto $\mb{v}_i$ and 
  $\mb{v}_{i+1}$, respectively, must be incidence vectors that differ in exactly 
  one component, i.e. are adjacent on the hypercube $\H_k$.
\end{proof}

\subsection{Technical Lemmas.}
\label{sec:technical-lemmas}
This section of the Appendix contains a detailed proof for the 
technical Lemma \ref{lem:rside_delta_gamma} introduced in Section 
\ref{sec:further_underst_induc_hypo},
which we include below, for convenience.

{\sc Lemma \ref{lem:rside_delta_gamma}.}
\emph{
  When the zonogon $\Theta$ has a non-trivial intersection with 
  the band $\B$ (case \textup{\textbf{[C4]}}), 
  the convex polygon $\Delta_\Gamma$ and the set of 
  points on its right side, $\rside(\Delta_\Gamma)$, satisfy the following
  properties:
  \begin{enumerate}
  \item  $\rside(\Delta_\Gamma)$ is the union of two sequences 
    of consecutive vertices (one starting at $\tilde{\mb{v}}_0$, and one ending at 
    $\tilde{\mb{v}}_k$), and possibly an additional vertex, $\tilde{\mb{v}}_t$:
    \begin{equation*}
      \rside(\Delta_\Gamma) = \left\{\tilde{\mb{v}}_0,\tilde{\mb{v}}_1,\dots,
        \tilde{\mb{v}}_{s} \right\} \cup \{\tilde{\mb{v}}_t\} 
      \cup \left\{ \tilde{\mb{v}}_{r},\tilde{\mb{v}}_{r+1}
        \dots,\tilde{\mb{v}}_{k}\right\}
      ,~ \text{for some}~ 
      s \leq r \in \{0,\dots,k\}.
    \end{equation*}
  \item With $\cotan{\cdot}{\cdot}$ given by \eqref{eq:cotan_definition} applied 
    to the $(\tilde{\gamma}_1,\tilde{\gamma}_2)$ coordinates, we have that:
    \begin{align*}
      \begin{cases}
        \cotan{\tilde{\mb{v}}_{s}}{\tilde{\mb{v}}_{\min(t,r)}}
        \geq \frac{a_{s+1}}{b_{s+1}}, & \textup{whenever $t > s$}\\
        \cotan{\tilde{\mb{v}}_{\max(t,s)}}{\tilde{\mb{v}}_{r}} \leq \frac{a_{r}}{b_{r}},
        & \textup{whenever $t < r$}.
      \end{cases}
    \end{align*}
  \end{enumerate}
}
\begin{proof}[Proof of Lemma \ref{lem:rside_delta_gamma}]
  In the following exposition, we will use the same notation as introduced in Section
  \ref{sec:further_underst_induc_hypo}. Recall that case \textbf{[C4]} on which the 
  lemma is focused corresponds to a nontrivial intersection of the zonotope $\Theta$  
  with the horizontal band $\B$ defined in (\ref{eq:BLU_definition}). As suggested 
  in Figure \ref{fig:zono_cases_nontrivial} of Section \ref{sec:further_underst_induc_hypo},
  this case can be separated into three subcases, depending on the 
  position of the vertex $\mb{v}_t$ relative to the band $\B$, where the index 
  $t$ is defined in (\ref{eq:t_index_definition}). Since the proof 
  of all three cases is essentially identical, we will focus on the more ``complicated''
  situation, namely when $\mb{v}_t \in \B$. The 
  corresponding arguments for the other two cases should be straightforward.
  
  First, recall that $\Delta_\Gamma$ is given by \eqref{eq:delta_gamma_convex_hull_defn},
  i.e. $\Delta_\Gamma = \conv \left( \{ \tilde{\mb{v}}_0,\dots,\tilde{\mb{v}}_k\} \right)$,
  where the points $\tilde{\mb{v}}_i$ are given by \eqref{eq:vi_tilde_definition},
  which results from applying mapping 
  \eqref{eq:mapping_for_gamma_tilde_ustar_explicit} to $\mb{v}_i \in \Theta$.
  From Definition \ref{def:right_side} of the \emph{right side}, it can be seen 
  that the points of interest to us, namely $\rside(\Delta_\Gamma)$, will be a maximal 
  subset $\left\{ \tilde{\mb{v}}_{i(1)},\tilde{\mb{v}}_{i(2)},\dots,\tilde{\mb{v}}_{i(m)} \right\} 
  \subseteq \left\{ \tilde{\mb{v}}_0,\dots,\tilde{\mb{v}}_k \right\}$, satisfying:
  \begin{equation}
    \left\{
      \begin{aligned}
        &\tilde{\mb{v}}_{i(1)} = \argmax_{\tilde{\gamma}_1} ~ \argmin_{\tilde{\gamma}_2}
        \left[ ~ \tilde{\mb{\gamma}} ~:~ \tilde{\mb{\gamma}}=
          \left( {\tilde{\gamma}_1}, {\tilde{\gamma}_2} \right) \in 
          \left\{ \tilde{\mb{v}}_0,\dots,\tilde{\mb{v}}_k \right\}
          ~\right] \\
        &\tilde{\mb{v}}_{i(m)} = \argmax_{\tilde{\gamma}_1} ~ \argmax_{\tilde{\gamma}_2}
        \left[ ~ \tilde{\mb{\gamma}} ~:~ \tilde{\mb{\gamma}}=
          \left( {\tilde{\gamma}_1}, {\tilde{\gamma}_2} \right) \in 
          \left\{ \tilde{\mb{v}}_0,\dots,\tilde{\mb{v}}_k \right\}
          ~\right] \\
        &\cotan{\tilde{\mb{v}}_{i(1)}}{\tilde{\mb{v}}_{i(2)}} > 
        \cotan{\tilde{\mb{v}}_{i(2)}}{\tilde{\mb{v}}_{i(3)}} > \dots >
        \cotan{\tilde{\mb{v}}_{i(m-1)}}{\tilde{\mb{v}}_{i(m)}}.
      \end{aligned}
    \right.
    \label{eq:rside_conditions}
  \end{equation}
  
  For the analysis, we will find it useful to define the following 
  two indices:
  \begin{equation}
    \begin{aligned}
      \hat{s} \bydef \min \left\{i \in \{0,\dots,k\} : \theta_2(\mb{v}_i) \geq y^*-U \right\},
      \quad 
      \hat{r} \bydef \max \left\{i \in \{0,\dots,k\} : \theta_2(\mb{v}_i) \leq y^*-L \right\}.
    \end{aligned}
    \label{eq:shat_rhat_index_definition}
  \end{equation}
  In particular, $\hat{s}$ is the index of the first vertex of $\rside(\Theta)$ 
  falling inside $\B$, and $\hat{r}$ is the index of the last vertex of $\rside(\Theta)$ 
  falling inside $\B$. Since we are in 
  the situation when $\mb{v}_t \in \B$, it can be seen that 
  $0 \leq \hat{s} \leq t \leq \hat{r} \leq k$, and thus, from \eqref{eq:t_index_definition}
  (the definition of $t$) and \eqref{eq:conditions_generators_theta} 
  (typical conditions for the right side of a zonogon):
  \begin{align}
    \frac{a_1}{b_1} > \dots > \frac{a_{\hat{s}}}{b_{\hat{s}}} > \dots 
    > \frac{a_t}{b_t} > c \geq \frac{a_{t+1}}{b_{t+1}}
    > \dots > \frac{a_{\hat{r}}}{b_{\hat{r}}} > \dots 
    > \frac{a_k}{b_k}.
    \label{eq:relation_at_ar_as_vt_in_band}
  \end{align}

  With this new notation, we proceed to prove the first result in the claim.
  First, consider all the vertices $\mb{v}_i \in \rside(\Theta)$ falling 
  strictly below the band $\B$, i.e. satisfying $\theta_2[ \mb{v}_i ] < y^* - U$.
  From the definition of $\hat{s}$, \eqref{eq:shat_rhat_index_definition},
  these are exactly $\mb{v}_0,\dots,\mb{v}_{\hat{s}-1}$, and mapping
  \eqref{eq:mapping_for_gamma_tilde_ustar_explicit} applied to them will yield:
  $\tilde{\mb{v}}_i = \left(~ \theta_1[\mb{v}_i] + c \cdot U,~ 
      \theta_2[\mb{v}_i] + U ~\right)$.
  In other words, any such points will simply be translated by $(c \cdot U, U)$.
  Similarly, any points $\mb{v}_i \in \rside(\Theta)$ falling 
  strictly above the band $\B$, i.e. $\theta_2[ \mb{v}_i ] > y^* - L$, 
  will be translated by $(c \cdot L, L)$, so that we have:
  \begin{equation}
    \begin{aligned}
      \tilde{\mb{v}}_i &= \mb{v}_i + (c \cdot U, U), \quad 
      i \in \{0,\dots,\hat{s}-1\}, \\
      \tilde{\mb{v}}_i & = \mb{v}_i + (c \cdot L, L), \quad 
      i \in \{\hat{r}+1,\dots,k\},
    \end{aligned}
    \label{eq:vtilde_for_translations}
  \end{equation}
  which immediately implies, since $\mb{v}_i \in \rside(\Theta)$, that:
  \begin{equation}
    \left\{
    \begin{aligned}
      &\cotan{\tilde{\mb{v}}_0}{\tilde{\mb{v}}_1} >
      \cotan{\tilde{\mb{v}}_1}{\tilde{\mb{v}}_2} > \dots 
      > \cotan{\tilde{\mb{v}}_{\hat{s}-2}}{\tilde{\mb{v}}_{\hat{s}-1}}, \\
      &\cotan{\tilde{\mb{v}}_{\hat{r}+1}}{\tilde{\mb{v}}_{\hat{r}+2}} > 
      \cotan{\tilde{\mb{v}}_{\hat{r}+2}}{\tilde{\mb{v}}_{\hat{r}+3}} > \dots 
      > \cotan{\tilde{\mb{v}}_{k-1}}{\tilde{\mb{v}}_k}.
    \end{aligned}
    \right.
    \label{eq:cotangents_translated_vis} 
  \end{equation}

  For any vertices inside $\B$, i.e. $\mb{v}_i \in \rside(\Theta) \cap \B$,
  mapping \eqref{eq:mapping_for_gamma_tilde_ustar_explicit} will yield:
  \begin{align}
    \tilde{\mb{v}}_i &= \left(~ \theta_1[\mb{v}_i] - 
      c \cdot \theta_2[\mb{v}_i] + c \cdot y^*,~ y^* ~\right), \quad 
    i \in \{\hat{s},\dots,t,\dots,\hat{r}\},
    \label{eq:gamtilda_coordinates_vi_inband}
  \end{align}
  that is, they will be mapped into points with the same $\tilde{\gamma}_2$
  coordinates. Furthermore, using (\ref{eq:simple_notation_theta12}), it can be 
  seen that $\tilde{\mb{v}}_t$ will have the largest $\tilde{\gamma}_1$ 
  coordinate among all such $\tilde{\mb{v}}_i$:
  \begin{align}
    \tilde{\gamma}_1[\tilde{\mb{v}}_t] - \tilde{\gamma}_1[\tilde{\mb{v}}_i] 
    &\bydef \theta_1[\mb{v}_t] - \theta_1[\mb{v}_i] - c 
    \cdot \left( \theta_2[\mb{v}_t] - \theta_2[\mb{v}_i]  \right) \nonumber \\
    &\overset{\tiny \eqref{eq:simple_notation_theta12}}{=} 
    \begin{cases}
      \sum_{j=i+1}^t a_j - c \cdot \sum_{j=i+1}^t b_j 
      \overset{\tiny \eqref{eq:relation_at_ar_as_vt_in_band}}{\geq} 0, &
      ~\text{if $\hat{s} \leq i < t$}\\
      - \sum_{j=t+1}^i a_j + c \cdot \sum_{j=t+1}^i b_j 
      \overset{\tiny \eqref{eq:relation_at_ar_as_vt_in_band}}{\geq} 0, &
      ~\text{if $t < i \leq \hat{r}$}.
    \end{cases}
    \label{eq:relation_vertices_in_band}
  \end{align}
  
  Furthermore, since the mapping \eqref{eq:mapping_for_gamma_tilde_ustar_explicit} 
  yielding $\tilde{\gamma}_2$ is only a function
  of $\theta_2$, and is monotonic non-decreasing (strictly monotonic increasing 
  outside the band $\B$), vertices 
  $\mb{v}_0,\dots,\mb{v}_k \in \rside(\Theta)$ will be mapped 
  into points $\tilde{\mb{v}}_0,\dots,\tilde{\mb{v}}_k \in \tilde{\Gamma}$ 
  with non-decreasing $\tilde{\gamma}_2$ coordinates:
  \begin{align*}
    \tilde{\gamma}_2[ \tilde{\mb{v}}_0 ] < \tilde{\gamma}_2[ \tilde{\mb{v}}_1 ]
    < \dots < \tilde{\gamma}_2[ \tilde{\mb{v}}_{\hat{s}-1} ] <
    y^* = \tilde{\gamma}_2[ \tilde{\mb{v}}_{\hat{s}} ] = \dots = 
    \tilde{\gamma}_2[ \tilde{\mb{v}}_{t} ] = \dots = 
    \tilde{\gamma}_2[ \tilde{\mb{v}}_{\hat{r}} ] < 
    \tilde{\gamma}_2[ \tilde{\mb{v}}_{\hat{r}+1} ] < \dots < 
    \tilde{\gamma}_2[ \tilde{\mb{v}}_{k} ].
  \end{align*}
  
  Therefore, combining this fact with \eqref{eq:cotangents_translated_vis} and 
  \eqref{eq:relation_vertices_in_band}, we can conclude 
  that the points $\tilde{\mb{v}}_i$ satisfying 
  conditions \eqref{eq:rside_conditions} are none other than:
  \begin{align*}
    \rside(\Delta_\Gamma) = \left\{ \tilde{\mb{v}}_0,\tilde{\mb{v}}_1,\dots,\tilde{\mb{v}}_s,
    \tilde{\mb{v}}_t,\tilde{\mb{v}}_r,\tilde{\mb{v}}_{r+1},\tilde{\mb{v}}_k \right\},
  \end{align*}
  where the indices $s$ and $r$ are given as:
  \begin{equation}
    \begin{aligned}
      s &\bydef 
      \begin{cases}
        \max \left\{ i \in \{1,\dots,\hat{s}-1\} ~:~ 
          \cotan{\tilde{\mb{v}}_{i-1}}{\tilde{\mb{v}}_{i}} > 
          \cotan{\tilde{\mb{v}}_{i}}{\tilde{\mb{v}}_{t}} \right\} \\
        0, ~\text{if the above condition is never true},
      \end{cases} \\
      r &\bydef 
      \begin{cases}
        \min \left\{ i \in \{\hat{r}+1,\dots,k-1\} ~:~ 
          \cotan{\tilde{\mb{v}}_{t}}{\tilde{\mb{v}}_{i}} > 
          \cotan{\tilde{\mb{v}}_{i}}{\tilde{\mb{v}}_{i+1}} \right\} \\
        k, ~\text{if the above condition is never true}.
      \end{cases}
    \end{aligned}
    \label{eq:solution_sr_indices}
  \end{equation}
  This completes the proof of part $(i)$ of the Lemma. We remark that, for the 
  cases when $\mb{v}_t$ falls strictly below $\B$ or strictly above $\B$, one can 
  repeat the exact same reasoning, and immediately argue that the same result would hold.

  In order to prove the first claim in part $(ii)$, we first recall that, 
  from \eqref{eq:solution_sr_indices}, if $s<\hat{s}-1$, we must have:
  \begin{align*}
    \cotan{\tilde{\mb{v}}_{s}}{\tilde{\mb{v}}_{s+1}} \leq 
    \cotan{\tilde{\mb{v}}_{s+1}}{\tilde{\mb{v}}_{t}},
  \end{align*}
  since otherwise, we would have taken $s+1$ instead of $s$ in \eqref{eq:solution_sr_indices}.
  But this immediately implies that:
  \begin{align*}
    \cotan{\tilde{\mb{v}}_{s}}{\tilde{\mb{v}}_{s+1}} &\leq 
    \cotan{\tilde{\mb{v}}_{s+1}}{\tilde{\mb{v}}_{t}}  ~
    \overset{\tiny \eqref{eq:cotan_definition}}{\Leftrightarrow} &
    \frac{\tilde{\gamma}_1[\tilde{\mb{v}}_{s+1}]-\tilde{\gamma}_1[\tilde{\mb{v}}_{s}]}
    {\tilde{\gamma}_2[\tilde{\mb{v}}_{s+1}]-\tilde{\gamma}_2[\tilde{\mb{v}}_{s}]} &\leq
    \frac{\tilde{\gamma}_1[\tilde{\mb{v}}_{t}]-\tilde{\gamma}_1[\tilde{\mb{v}}_{s+1}]}
    {\tilde{\gamma}_2[\tilde{\mb{v}}_{t}]-\tilde{\gamma}_1[\tilde{\mb{v}}_{s+1}]} ~\Rightarrow
    \text{(mediant inequality)} \\
    \frac{\tilde{\gamma}_1[\tilde{\mb{v}}_{s+1}]-\tilde{\gamma}_1[\tilde{\mb{v}}_{s}]}
    {\tilde{\gamma}_2[\tilde{\mb{v}}_{s+1}]-\tilde{\gamma}_2[\tilde{\mb{v}}_{s}]} &\leq
    \frac{\tilde{\gamma}_1[\tilde{\mb{v}}_{t}]-\tilde{\gamma}_1[\tilde{\mb{v}}_{s}]}
    {\tilde{\gamma}_2[\tilde{\mb{v}}_{t}]-\tilde{\gamma}_1[\tilde{\mb{v}}_{s}]} \quad
    \overset{\tiny \eqref{eq:vtilde_for_translations}}{\Leftrightarrow}
    & \frac{a_{s+1}}{b_{s+1}} &\leq \cotan{\tilde{\mb{v}}_{s}}{\tilde{\mb{v}}_{t}},
  \end{align*}
  which is exactly the first claim in part $(ii)$. Thus, the only case to discuss 
  is $s=\hat{s}-1$. Since $s \geq 0$, it must be that, in this case, there are 
  vertices $\mb{v}_i \in \rside(\Theta)$ falling strictly below the band $\B$. 
  Therefore, we can introduce the following point in $\Theta$:
  \begin{equation}
    \begin{aligned}
      M &\bydef \argmax_{\theta_1} \left\{\, (\theta_1,\theta_2) \in \Theta 
        \,:\, \theta_2 = y^*-U \,\right\}
    \end{aligned}
    \label{eq:point_M_in_Theta_def}
  \end{equation}
  Referring back to Figure \ref{fig:matching_affine_control} in Section 
  \ref{sec:constr-affine-contr}, it can be seen that
  $M$ represents the point with 
  smallest $\theta_2$ coordinate in $\B \cap \rside(\Theta)$, and
  $M \in [\mb{v}_{\hat{s}-1},\mb{v}_{\hat{s}}]$.
  If we let $(\theta_1[M],\theta_2[M])$ denote the coordinates of $M$, 
  then by applying mapping (\ref{eq:mapping_for_gamma_tilde_ustar_explicit}) to $M$,
  the coordinates of the point $\tilde{M} \in \tilde{\Gamma}$ are:
  \begin{equation}
    \tilde{M} =\left(\, \theta_1[M] + c \cdot U,~ \theta_2[M] + U \,\right)
    = \left(\, \theta_1[M] + c \cdot U,~ y^* \,\right).
    \label{eq:point_Mtilde_def}
  \end{equation}
  Furthermore, a similar argument with \eqref{eq:relation_vertices_in_band} can 
  be invoked to show that $\tilde{\gamma}_1[\tilde{M}] \leq \tilde{\gamma}_1[\tilde{\mb{v}}_t]$.
  With $s=\hat{s}-1$, we then have:
  \begin{align*}
    \cotan{\tilde{\mb{v}}_{s}}{\tilde{\mb{v}}_{t}} &
    ~ \, \overset{\tiny \eqref{eq:cotan_definition}}{=}
    \frac{\tilde{\gamma}_1[\tilde{\mb{v}}_{t}]-\tilde{\gamma}_1[\tilde{\mb{v}}_{\hat{s}-1}]}
    {\tilde{\gamma}_2[\tilde{\mb{v}}_{t}]-\tilde{\gamma}_2[\tilde{\mb{v}}_{\hat{s}-1}]} ~\geq 
    \quad \text{(since $\tilde{\gamma}_2[\tilde{\mb{v}}_{t}]=
      \tilde{\gamma}_2[\tilde{M}]=y^* >
      \tilde{\gamma}_2[\tilde{\mb{v}}_{\hat{s}-1}]$)}\\
    &\quad \geq \frac{\tilde{\gamma}_1[\tilde{M}]-\tilde{\gamma}_1[\tilde{\mb{v}}_{\hat{s}-1}]}
    {\tilde{\gamma}_2[\tilde{M}]-\tilde{\gamma}_2[\tilde{\mb{v}}_{\hat{s}-1}]} \\
    &\overset{\tiny \eqref{eq:vtilde_for_translations}, \eqref{eq:point_Mtilde_def}}{=} 
    \frac{\theta_1[M] - \theta_1[\mb{v}_{\hat{s}-1}]}
    {\theta_2[M] - \theta_2[\mb{v}_{\hat{s}-1}]} ~= 
    \quad \text{(since $M \in [\mb{v}_{\hat{s}-1},\mb{v}_{\hat{s}}]$)} \\
    &\quad \, = \frac{a_{s+1}}{b_{s+1}} ~,
  \end{align*}
  which completes the proof of the first claim in part $(ii)$.

  The proof of the second claim in $(ii)$ proceeds in an analogous fashion, by first examining 
  the trivial case $r > \hat{r}+1$ in \eqref{eq:solution_sr_indices}, and then 
  introducing $N \bydef \argmax_{\theta_1} \left\{\, (\theta_1,\theta_2) \in \Theta 
    \,:\, \theta_2 = y^*-L \,\right\}$ for the case $r = \hat{r}+1$.
\end{proof}

\small
\bibliographystyle{plainnat}
\bibliography{MOR_robust}

\end{document}